\documentclass[11pt,reqno]{amsart}
\setlength{\topmargin}{0cm}\setlength{\textheight}{205mm}
\setlength{\oddsidemargin}{0.5cm} \setlength{\evensidemargin}{0.5cm}
\setlength{\textwidth}{155mm}
\usepackage{amssymb}
\usepackage{amsfonts}
\usepackage{mathrsfs}
\usepackage{amsmath}
\usepackage{graphicx}
\usepackage{hyperref}
\usepackage{float}
\usepackage{epstopdf}
\usepackage{color}
\usepackage{bm}
\usepackage{comment}
\usepackage{soul}
\usepackage{enumerate}

\setcounter{MaxMatrixCols}{10}

\allowdisplaybreaks
\newtheorem{theorem}{Theorem}[section]
\newtheorem{proposition}[theorem]{Proposition}
\newtheorem{lemma}[theorem]{Lemma}
\newtheorem{corollary}[theorem]{Corollary}
\newtheorem{remark}[theorem]{Remark}

\numberwithin{equation}{section}

\begin{document}
\title{Assouad and lower dimensions of graph-directed  Bedford-McMullen carpets}

\author{Hua Qiu}
\address{School of Mathematics, Nanjing University, Nanjing, 210093, P. R. China.}
\thanks{The research of Qiu was supported by the National Natural Science Foundation of China, grant 12470187}
\email{huaqiu@nju.edu.cn}

\author{Qi Wang}
\address{School of Mathematics, Nanjing University, Nanjing, 210093, P. R. China.}
\email{602023210013@smail.nju.edu.cn}
\author{Shufang Wang$^*$}
\thanks{*Corresponding author}
\address{School of Mathematics, Nanjing University, Nanjing, 210093, P. R. China.}
\thanks{}
\email{wangsf6822@yeah.net}

\subjclass[2010]{Primary 28A80; Secondary 28A78}

\date{}

\keywords{Assouad dimension, lower dimension, directed graph, self-affine set}

\maketitle

\begin{abstract}
We calculate the Assouad and lower dimensions of graph-directed Bedford-McMullen carpets, which reflect the extreme local scaling laws of the sets, in contrasting with known results on Hausdorff and box dimensions. We also investigate the relationship between distinct dimensions. In particular, we identify an equivalent condition when the box  and Assouad dimension coincide, and show that under this condition, the Hausdorff dimension attains the same value.

\end{abstract}

\section{Introduction}

Since the mid 80s, the study of self-affine sets has emerged as an independent research field, in which the class of Bedford-McMullen carpets \cite{B84, Mc84} plays a prominent role. There has been significant interest in calculating the dimensions of these sets as well as their generalizations, including the Lalley-Gatzouras class \cite{LG92},  Bara\'{n}ski carpets \cite{Bar07}, Feng-Wang box-like sets \cite{FW05, F12,F16}, Kenyon-Peres $(\times m,\times n)$ carpets generated by shifts of finite type or sofic shifts \cite{KP96, F23}, and their high-dimensional analogs \cite{DS17, FJ21, K23,Olsen98}. Unlike the case of self-similar sets, a key feature in this study is the provision of plenty of examples generated by recursive constructions, which exhibit distinct Hausdorff and box dimensions.

In 2011, Mackay \cite{M11} initiated the study of Assouad dimension of sets generated by Lalley-Gatzouras construction, including Bedford-McMullen carpets. This investigation was later extended by Fraser \cite{F14} to Bara\'{n}ski carpets,  which are generated by a box-like affine construction with a more flexible grid structure. As a natural dual, the exact value of the lower dimension (introduced by Larman  \cite{Lar67}) of sets in this scenario was also determined in  \cite{F14}. As a rapidly expanding branch of dimension theory on fractals, Assouad and lower dimensions focus on the local geometric information of fractals, reflecting the scaling laws of the thickest or thinnest parts of the sets. For further background and details on these two dimensions, refer to \cite{F21}.
 A fascinating observation in \cite{M11, F14} for the Lalley-Gatzouras class is the dichotomy where the Hausdorff, box, Assouad and lower dimensions are either all distinct or  equal.

In this note, we extend the consideration of Assouad and lower dimensions to graph-directed Bedford-McMullen carpets, which serves as a typical example of $(\times m,\times n)$ carpets generated by shifts of finite type introduced in Kenyon-Peres \cite{KP96}.

\vspace{0.2cm}

Let us begin with a \textit{finite directed graph}, denoted as $G:=(V,E)$, where $V$ represents the \textit{vertex set} and $E$ denotes the \textit{directed edge set}, allowing loops and multiple edges. For each edge $e$ in $E$, let $i(e)$ be the \textit{initial vertex} of $e$, $t(e)$ be the \textit{terminal vertex} of $e$, and denote the edge as $v \stackrel{e}{\rightarrow}v'$. We always assume that for each $v\in V$, there exists an edge $e\in E$ such that $i(e)=v$.
Given two positive integers $n>m$, we associate each edge $e$ in $E$ with an affine contraction $\psi_e:[0,1]^2\to [0,1]^2$ given by:
$$
\psi_e\left(\begin{aligned}
    &\xi_1 \\&\xi_2
\end{aligned}\right)= \left(
\begin{array}{cc}
    n^{-1} & 0\\
    0 & m^{-1}
\end{array} \right) \left( \begin{aligned}
    &\xi_1+x_e \\&\xi_2 +y_e
\end{aligned}\right), \quad \text{ for } (\xi_1, \xi_2) \text { in }{[0,1]^2},
$$
where $(x_e,y_e)\in \{0,1,\cdots,n-1\}\times\{0,1,\cdots,m-1\}$. Let $\Psi=\{\psi_e\}_{e\in E}$ be the collection of all these contractions. Then the triple $(V,E,\Psi)$ becomes a \textit{graph-directed iterated function system}. It is well known that there exists a unique family of attractors $\{ X_v\}_{v\in V}$ contained in $[0,1]^2$ satisfying
$$
X_v =\bigcup_{e\in E:i(e)=v} \psi_e(X_{t(e)}), \quad \text{ for all }v\in V.
$$
We refer to $\{X_v\}_{v\in V}$ as a \textit{graph-directed Bedford-McMullen  $(\times m,\times n)$-carpet family} generated by $(V,E,\Psi)$ and  $X:=\bigcup_{v\in V}X_v$ as a \textit{graph-directed Bedford-McMullen  $(\times m,\times n)$-carpet}.

 Now, let us define $$
E^\infty:=\{ \omega=\omega_1 \omega_2 \cdots:  \omega_{i}\in E, t(\omega_{i})=i(\omega_{i+1}) \text{ for all } i\geq 1 \}.
$$
as the collection of \textit{infinite admissible words} along $G$.
For $\omega\in E^\infty$, denote $i(\omega):=i(\omega_1).$ It is straightforward to observe that
$$
X_v=\left\{\left(\sum_{i=1}^{\infty}\frac{x_{\omega_i }}{n^i},\sum_{i=1}^{\infty}\frac{y_{\omega_i}}{m^i}\right): \omega\in E^\infty, i(\omega)=v \right\}, \quad \text{ for all  } v\in V.
$$


Throughout this note, in accordance with standard conventions, we will write
$\dim_H$, $\underline{\dim}_B$, $\overline{\dim}_B$, $\dim_A$, $\dim_L$ for the Hausdorff, lower box, upper box, Assouad and lower dimensions, respectively. If $\underline{\dim}_B$
 and $\overline{\dim}_B$  coincide, we simply refer to the box dimension, denoted as $\dim_B$. For further details on these  dimensions, refer to \cite{Fal14,F21}.

\vspace{0.2cm}

Denote $\pi:\mathbb{R}^{2}\rightarrow \mathbb{R}$ the projection map onto the second coordinate axis, i.e. $\pi(\xi_1,\xi_2)=\xi_2$.  We will introduce two sequences $\{\alpha_k\}_{k\geq 1}$, $\{\beta_k\}_{k\geq 1}$ whose values depend on  $\{\dim_B\pi(X_v)\}_{v\in V}$, see details in Subsection \ref{subsec22}. Taking $\alpha=\lim_{k\to \infty}(\alpha_k)^{1/k}$ and $ \beta=\liminf_{k\rightarrow \infty} (\beta_k)^{1/k},$ we will prove that

\begin{theorem}\label{th1}
    Let  $\{X_v\}_{v\in V}$ be a   graph-directed Bedford-McMullen $(\times m, \times n)$-carpet family. Write $X=\bigcup_{v\in V} X_v$.  We have
    \begin{equation}\label{s1}
    \dim_A X =\frac{\log \alpha}{\log n}\quad\text{ and }\quad \dim_L X =\frac{\log \beta}{\log n}.
    \end{equation}

\end{theorem}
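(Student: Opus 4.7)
The strategy is a bi-scale intermediate covering argument adapted to the graph-directed setting. Each level-$k$ cylinder $\psi_{\omega_1}\circ\cdots\circ\psi_{\omega_k}([0,1]^2)$ is an $n^{-k}\times m^{-k}$ rectangle, and since $n>m$ the two sides contract at different rates; the Assouad and lower dimensions are governed by the extremal density of such cylinders in a small neighborhood of a point, which is precisely what $\alpha_k$ (supremum) and $\beta_k$ (infimum) are designed to capture. Given $x\in X_v$ and $0<r<R<1$, I would choose integers $j\le k$ with $n^{-j}\asymp R$ and $m^{-k}\asymp r$. A direct estimate then shows that $N_r(X\cap B(x,R))$ is comparable to the number of admissible paths $\omega_1\cdots\omega_k$ in $G$ starting at $v$ whose first $j$ horizontal digits $x_{\omega_1},\ldots,x_{\omega_j}$ and first $\lceil k\log m/\log n\rceil$ vertical digits are pinned by the position of $x$, with the remaining digits free subject to graph-directed admissibility.

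Next, I would invoke the definitions of $\alpha_k$ and $\beta_k$ from Subsection~\ref{subsec22}: each should be, up to multiplicative constants, the supremum (respectively, infimum) over starting vertex and pinned-digit data of the count of admissible $k$-step completions, weighted by factors encoding the projection dimensions $\dim_B\pi(X_v)$. This yields $c\beta_k\le N_r(X\cap B(x,R))\le C\alpha_k$ at the appropriate index shift between $j$ and $k$. Plugging into the definitions of $\dim_A$ and $\dim_L$, together with the identity $\log(R/r)\asymp(k-j)\log n$ once the horizontal--vertical conversion is absorbed, delivers
\[
\dim_A X=\lim_{k\to\infty}\frac{\log\alpha_k}{k\log n}=\frac{\log\alpha}{\log n},\qquad \dim_L X=\liminf_{k\to\infty}\frac{\log\beta_k}{k\log n}=\frac{\log\beta}{\log n}.
\]
A submultiplicative inequality $\alpha_{k+\ell}\le C\alpha_k\alpha_\ell$, established by concatenating path configurations through intermediate vertices, combined with Fekete's lemma promotes the $\limsup$ to a genuine limit; no analogous supermultiplicative inequality holds for $\beta_k$, which is why only a $\liminf$ appears in the second identity. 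For the matching lower bound in the Assouad formula, for each $k$ I would construct an explicit witness: pick a starting vertex and digit data realizing $\alpha_k$ and, using the standing assumption that every vertex has at least one outgoing edge, complete this to an infinite admissible word whose corresponding point $x^{(k)}\in X$, together with suitable scales $(R_k,r_k)$, saturates the estimate; the construction for $\dim_L$ is symmetric.

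The principal technical challenge is the bookkeeping forced by the graph structure. In the classical one-vertex Bedford-McMullen case, horizontal and vertical digits at each level can be chosen independently, path counts factor across levels, and one recovers immediately the clean Mackay--Fraser formula $\dim_A F=\log_n C+\log_m R_{\max}$. Here, however, the choice of edge at step $i$ couples the horizontal digit, the vertical digit, and the vertex reached at step $i+1$, so pinning a horizontal or vertical digit sequence imposes subtle nonlocal constraints on the available completions. This is exactly the obstruction that forces $\alpha_k$ and $\beta_k$ to be defined vertex by vertex and weighted by $\dim_B\pi(X_v)$, and it is why identifying $\alpha_k$ with the supremum of the partially-constrained path count --- rather than a clean closed-form expression --- is the delicate part of the argument.
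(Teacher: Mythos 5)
Your high-level strategy (a direct two-scale counting argument with approximate squares, avoiding weak tangents) is indeed the route the paper takes, but as written the proposal has three concrete gaps. First, you identify $N_r(X\cap B(x,R))$ ``up to multiplicative constants'' with a count of \emph{admissible paths} with pinned digits. Since no separation condition (ROSC) is assumed, distinct paths can produce exactly the same rectangle, so the geometric covering number corresponds to counting \emph{equivalence classes} $[w]$ of words with $\psi_w=\psi_{w'}$ -- this is precisely how $\alpha_k$ and $\beta_k$ are defined in Subsection~\ref{subsec22}. Counting paths would inflate the lower bound for $\dim_A$ (and deflate the upper bound for $\dim_L$) beyond the claimed value. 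Moreover the comparability cannot be ``up to multiplicative constants'': the weights $n^{k\eta(\cdot)}$ come from box dimensions of projections, which control covering numbers only up to factors $m^{\pm\epsilon k}$ (Lemma~\ref{le3}), so the whole argument must be run with an $\epsilon$-loss and a limiting step. Second, your witness construction for the Assouad lower bound is incomplete: completing the optimal configuration \emph{forward} to an infinite word (using outgoing edges) does not localize it. For the Assouad dimension you must realize the configuration inside approximate squares of arbitrarily small size, i.e.\ you need admissible words of every large length $l=\lfloor k\log_n m\rfloor$ \emph{terminating} at the optimal vertex $v$; this is exactly why $\alpha_k$ is restricted to $v\in\tilde V$ and why the paper pumps a long incoming word $\tilde w\in E^l$ with $t(\tilde w)=v$ (Lemma~\ref{le2}). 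Outgoing edges are irrelevant to this point, and without it the lower bound is not established.

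Third, the lower-dimension half is not ``symmetric'' to the Assouad half. The quantity $\beta_k$ is not simply an infimum of completion counts: when an approximate square cuts a column, only a portion $\phi_w(I_{y'})$ of the projected set is relevant, which is why $\beta_k$ is built from the refined function $\theta(v,x,y,y')$ together with a $\min$ over long words $w$, a $\max$ over terminal vertices in $t([w])$, and indicators discarding empty pieces; a naive infimum over pinned-digit completions would not equal $\beta$ and the claimed identity $\dim_L X=\log\beta/\log n$ would not follow. Relatedly, your ``direct estimate'' hides a genuine case split that the paper must handle separately: when the small scale is coarser than the width of the big approximate square one needs an auxiliary count $\tau_k$ of equivalence classes with unconstrained vertical word and the nontrivial inequality $\tau\le\alpha$ (Lemma~\ref{le1}, proved by passing through the box dimension of $X$), and the analogous case for the lower bound (Case (ii) of Lemma~\ref{low1}) needs a separate column-by-column argument. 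These are not bookkeeping details; without them neither the upper bound for $\dim_A$ nor either bound for $\dim_L$ closes.
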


The result in Theorem \ref{th1} can be seen as a complementary to the Hausdorff and box dimensions of $X$ considered in \cite{F23,KP96}, noting that Assouad and lower dimensions are more sensitive to the extreme local structure of fractals, whereas Hausdorff and box dimensions reveal more global geometric information.

It is worth noting that unlike previous works on $(\times m, \times n)$ carpets, we do not assume a separating condition for $(V, E, \Psi)$, i.e. we allow two distinct contractions in $\Psi$ to have the same box-like images. Recall that in the usual consideration \cite{B84, KP96,Mc84}, the  \textit{rectangular open set condition}(ROSC) is always assumed,
$$
\bigcup_{e\in E:i(e)=v}\psi_e((0,1)^2) \subseteq(0,1)^2, \quad \text{ for all } v\in V,
$$
    and the union is disjoint (See also \cite{FW05}). Benefiting from the lattice structure of Bedford-McMullen carpets, we can drop ROSC, which is necessary since on $X$, the union set of carpet family, complete overlap of boxes inevitably happens in general.

    We should also point out that once Theorem \ref{th1} is established, we can calculated the exact values of dimensions for each $X_v$, $v\in V$, since $\dim_A X_v=\dim_A \bigcup_{v'\in V_v} X_{v'}$ noticing that  $\{X_{v'}\}_{v'\in V_v}$ is a carpet family generated by $(V_v, E_v, \Psi_v)$ with $V_v:=\{v'\in V: v\rightarrow\cdots\rightarrow v'\}\cup\{v\}$, $E_v:=\{e\in E: i(e)\in V_v, t(e)\in V_v\}$ and $\Psi_v:=\{\psi_e\in \Psi: e\in E_v\}$.

    Another noteworthy aspect of this theorem is that when proving the lower bound of Assouad dimension and the upper bound of lower dimension, we avoid using the technique of constructing \textit{weak tangents} as done in \cite{M11, F14}, since achieving it would be challenging when dealing with  more complicated fractals.

   \vspace{0.2cm}
    To make a quick comparison of our result with those of Mackay \cite{M11} and Fraser \cite{F14}, let us define $A$ as the $\#V\times \# V$ \textit{adjacency matrix} of the directed graph $G=(V,E)$, where $A(v,v')$ represents the number of edges in $E$ from $v$ to $v'$ for $v,v'$ in $V$; and for $0\leq j<m$, let $A_j$ be a $\#V \times \#V$ matrix defined as
    $$
    A_j(v,v')=\# \{ e\in E: v \stackrel{e}{\rightarrow}v', y_e=j\}, \quad \text{ for all }v,v'\in V.
    $$ Note that here
    $
     A=\sum_{j=0}^{m-1} A_j.
    $
    The following corollary suggests that, similar to \cite{M11, F14}, in the graph-directed setting, the Assouad dimension of $X$ should be still the summation of $\dim_B \pi(X)$ and the maximal dimension of slices vertical to $\pi(X)$. Precisely,
    \begin{corollary}\label{c1}
    Let $X$ be same as in Theorem \ref{th1}. Assume that $ROSC$ holds and $A$ is \emph{irreducible,} i.e. $\sum_{j\geq 1} A^j$ is positive. We have
    $$
    \begin{aligned}
    \dim_A X&=\dim_B \pi(X)+\frac{1}{\log n} \lim_{k\to \infty} \frac{1}{k}\log \max_{y_1,\cdots, y_k\in \{0,1,\cdots,m-1\}}  \Vert A_{y_1} \cdots A_{y_k} \Vert,
    \end{aligned}
    $$
    where $\Vert\cdot\Vert$ represents any matrix norm.
\end{corollary}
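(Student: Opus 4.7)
The plan is to derive the corollary directly from Theorem~\ref{th1} by simplifying the expression for $\alpha$ under the two extra hypotheses. Writing
$$ \lambda := \lim_{k\to\infty}\Bigl(\max_{y_1,\ldots,y_k\in\{0,\ldots,m-1\}}\Vert A_{y_1}\cdots A_{y_k}\Vert\Bigr)^{1/k}, $$
the task reduces to showing $\log\alpha = \log n\cdot\dim_B\pi(X) + \log\lambda$. Existence of the limit defining $\lambda$ is automatic: the sequence $a_k := \max_{y_1,\ldots,y_k}\Vert A_{y_1}\cdots A_{y_k}\Vert$ is submultiplicative for any submultiplicative matrix norm, so Fekete's lemma yields a genuine limit, and equivalence of norms in finite dimension makes the value norm-independent.

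The first step I would carry out is to show that under ROSC and irreducibility of $A$, the projection dimension $\dim_B\pi(X_v)$ is independent of $v\in V$ and coincides with $\dim_B\pi(X)=:s_\pi$. Indeed, $\pi(X)=\bigcup_v\pi(X_v)$ is the attractor of a graph-directed self-similar family on $[0,1]$ with uniform contraction $1/m$, whose adjacency (after identifying edges from $v$ to $v'$ sharing a common $y$-value) is encoded by the matrices $A_0,\ldots,A_{m-1}$; ROSC in the plane descends to the one-dimensional open set condition for this projected system, and irreducibility of $A=\sum_j A_j$ makes the projected graph strongly connected. A standard Perron--Frobenius computation then gives the common value $s_\pi$. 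The second step is to revisit the definition of $\alpha_k$ in Subsection~\ref{subsec22}, now with all exponents $\dim_B\pi(X_v)$ equal to $s_\pi$, and establish the asymptotic factorization
$$ \alpha_k \asymp n^{k s_\pi}\cdot\max_{y_1,\ldots,y_k}\Vert A_{y_1}\cdots A_{y_k}\Vert, $$
with implied constants independent of $k$. The heuristic is that $\alpha_k$ records the worst-case local thickness of $X$ at horizontal scale $n^{-k}$, which factors into (a) the projection contribution saturating the scaling $n^{ks_\pi}$ once $\dim_B\pi(X_v)$ is a single common value, and (b) the maximal fiber count, i.e. the largest number of admissible length-$k$ edge-words with a prescribed $y$-sequence $y_1\cdots y_k$ starting at some $v$ and ending at some $v'$ --- which under ROSC is exactly the maximum over $(v,v')$ of $(A_{y_1}\cdots A_{y_k})(v,v')$. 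Taking $k$-th roots and passing to the limit yields the identity.

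The main obstacle I expect is the clean reconciliation in step two between the internal weighting of $\alpha_k$ and the operator norm of the matrix product. Both hypotheses are essential: ROSC ensures that distinct admissible edge-words of length $k$ produce geometrically distinct $n^{-k}\times m^{-k}$ rectangles, so that entries of $A_{y_1}\cdots A_{y_k}$ genuinely record local thickness rather than overcounting coincident images; irreducibility guarantees both that all $\dim_B\pi(X_v)$ collapse to the single exponent $s_\pi$ (allowing the projection factor to be cleanly extracted from the maximum) and that any two nonzero entries of $A_{y_1}\cdots A_{y_k}$ remain comparable up to a multiplicative constant independent of $k$, so that the maximum entry, the sum of entries, and any operator norm all share the same exponential growth rate $\lambda$ and the choice among them only affects a subexponential prefactor that vanishes under the $k$-th root limit. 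Without irreducibility one is left with a vertex-by-vertex decomposition that does not collapse into the single product form claimed by the corollary.
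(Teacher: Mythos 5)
Your overall route is the same as the paper's: reduce everything to the formula $\dim_A X=\frac{\log\alpha}{\log n}$ from Theorem \ref{th1}, observe that irreducibility forces $\eta(v,x_w,y_w)=\dim_B\pi(X)$ for every admissible word, and use ROSC to identify the count of equivalence classes $[w]$ with $i(w)=v$, $y_w=y$ with the count of admissible words, i.e.\ with $\sum_{v'}(A_{y_1}\cdots A_{y_k})(v,v')$; this gives exactly the paper's identity $\alpha_k=\max_v\max_y n^{k\dim_B\pi(X)}\sum_{v'}A_{y_1}\cdots A_{y_k}(v,v')$, and your version with the maximal entry instead of the maximal row sum only differs by a factor at most $\#V$, which is harmless in the limit. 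So the core of the proposal is correct and essentially identical to the paper's proof.

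Two of your supporting justifications, however, are inaccurate (though inessential). First, ROSC does \emph{not} descend to an open set condition for the projected system: the maps $\phi_e$ depend only on $y_e$, so two edges from $v$ with the same $y$-digit but different terminal vertices (or different $x$-digits) produce identical or overlapping images of the projected pieces, and this is precisely why the paper invokes the Das--Ngai finite-type machinery in Proposition \ref{p1} rather than a Perron--Frobenius/OSC computation. What you actually need from this step is only that $\dim_B\pi(X_v)$ is independent of $v$ and equals $\dim_B\pi(X)$, which follows from irreducibility alone (Proposition \ref{p1}); no explicit value of $s_\pi$ is required. Second, irreducibility of $A$ does \emph{not} make all nonzero entries of $A_{y_1}\cdots A_{y_k}$ comparable uniformly in $k$ (e.g.\ with $A_0=\left(\begin{smallmatrix}2&1\\0&1\end{smallmatrix}\right)$ the product $A_0^k$ has nonzero entries $2^k$ and $1$). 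Fortunately you do not need this: the maximal entry, the sum of entries, and any norm always differ by a factor at most $(\#V)^2$ times a norm-equivalence constant, so they trivially share the same exponential growth rate. With these two repairs your plan carries through exactly as in the paper.
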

   We will extend this observation to the general case in subsequent Theorem \ref{th3}.
    \vspace{0.2cm}

    The second main aim of this note is to explore whether  a dichotomy similar to \cite{M11, F14} exists, in that the values of distinct dimensions are either all distinct or equal. To this end, we establish an equivalent  condition (see \eqref{e26}) under which the box and Assouad dimensions of $X$ coincide. Moreover, we prove that under this condition, it further follows that
    $$\dim_HX=\dim_BX=\dim_AX.$$ However, there is no a general dichotomy as wondered, since we can provide an example of $X$ for which it holds that
    $$\dim_L X<\dim_H X=\dim_B X<\dim_A X.$$

    To illustrate this, let us introduce some additional notation. Let $$\mathcal{I}=\{(x_{\omega_1},y_{\omega_1})(x_{\omega_2},y_{\omega_2})\cdots:\omega \in E^\infty \} \subseteq \big \{ \{0,\cdots, n-1\}\times \{0,\cdots, m-1\} \big \}^{\mathbb{N}}
    $$
    and denote
    $$
    \pi \mathcal{I}=\{y_{\omega_1} y_{\omega_2}\cdots:\omega\in E^\infty\} \subseteq \{0,\cdots, m-1\}^{\mathbb{N}}.
    $$
    Here, somewhat abusing the notation, we continue to use $\pi$ to represent the `\textit{projection}' from $\mathcal{I}$ onto $\pi \mathcal{I}$, i.e. we write
    $
    \pi((x_{\omega_1},y_{\omega_1})(x_{\omega_2},y_{\omega_2})\cdots)=y_{\omega_1} y_{\omega_2}\cdots.
    $

    Let $\sigma:\mathcal{I}\to \mathcal{I}$ (resp. $\tilde{\sigma}:\pi \mathcal{I}\to \pi\mathcal{I}$) denote the left shift map. Clearly, $\sigma(\mathcal{I})\subseteq \mathcal{I}$ and $\tilde{\sigma}(\pi \mathcal{I})\subseteq \pi \mathcal{I}$. Regard $(\mathcal{I},\sigma)$ and $(\pi\mathcal{I},\tilde{\sigma})$ as two topological dynamics and $\pi:(\mathcal{I},\sigma)\to (\pi \mathcal{I},\tilde{\sigma})$ as a projection  satisfying $\tilde{\sigma}\circ \pi =\pi \circ \sigma$.
 In a standard manner, we use $h_{\textup{top}}(K)$ to denote the \textit{topological entropy}  of a compact  set $K$ in $\mathcal{I}$ (resp. $\pi{\mathcal{I}}$) (In our context, we always omit $\sigma$ (resp. $\tilde{\sigma}$)).

Return to the directed graph $G=(V,E)$. If the adjacency matrix $A$ of $G$ is irreducible, we say $G$ is an \textit{irreducible directed graph}. An \textit{irreducible component} $H$ of $G$ is a maximal subgraph  of $G$ such that $H$ is irreducible.  Let $\{H_i:=(V_i,E_i)\}_{i=1}^r$ denote all the irreducible components of $G$. For $i=1,\cdots,r,$ denote $$\mathcal{I}_{H_i}=\{(x_{\omega_1},y_{\omega_1})(x_{\omega_2},y_{\omega_2})\cdots:\omega \in E_i^\infty \}\subseteq \mathcal{I},$$ where $E_i^\infty$ represents the collection of infinite admissible words along $H_i$. Accordingly, write $\pi\mathcal{I}_{H_i}=\pi(\mathcal{I}_{H_i})$. Let $\{i\}^+$ be the collection of $1\leq j\leq r$ such that there is a path from a vertex in $H_i$ to a vertex in  $H_j$, i.e. $v\to \cdots \to v'$ for some $v$ in $V_i$, $v'$ in $V_j$.

Recently, Fraser and Jurga \cite[Theorem 1.2]{F23} derived the following formula for the box dimension of $X$,
\begin{equation}\label{e25}
\dim_B X= \max_{1\leq i\leq r}\left \{ \frac{h_{\textup{top}}(\mathcal{I}_{H_i})}{\log n}+ \max_{j\in \{i\}^+} h_{\textup{top}}(\pi \mathcal{I}_{H_j})\left (\frac{1}{\log m}-\frac{1}{\log n}\right) \right \}.
\end{equation}
Roughly, it is the maximal box dimension of carpets among all irreducible components.

Our second main result is as follows.

\begin{theorem}\label{th3}
    Let $X$ be same as in Theorem \ref{th1}, we have
    $$
    \dim_A X=\max_{1\leq i\leq r}\left\{ \sup_{y\in \pi \mathcal{I}_{H_i}} \frac{h_{\textup{top}}(\pi^{-1}(y)\cap \mathcal{I}_{H_i})}{\log n}+\max_{j\in \{i\}^+}\frac{h_{\textup{top}}(\pi\mathcal{I}_{H_j})}{\log m}  \right\}.
    $$
   In particular, $\dim_B X=\dim_A X$ if and only if there exists $i\in \{1,\cdots, r\}$ such that
    \begin{equation}\label{e26}
    \begin{aligned}
    \dim_B X&= \frac{h_{\textup{top}}(\mathcal{I}_{H_i})}{\log n}+  h_{\textup{top}}(\pi \mathcal{I}_{H_i})\left (\frac{1}{\log m}-\frac{1}{\log n}\right),
    \\
    \dim_A X&= \sup_{y\in \pi \mathcal{I}_{H_i}} \frac{h_{\textup{top}}(\pi^{-1}(y)\cap \mathcal{I}_{H_i})}{\log n}+\frac{h_{\textup{top}}(\pi\mathcal{I}_{H_i})}{\log m},
    \\
    h_{\textup{top}}(\mathcal{I}_{H_i})&=\sup_{y\in \pi \mathcal{I}_{H_i}} h_{\textup{top}}(\pi^{-1}(y)\cap \mathcal{I}_{H_i})+h_{\textup{top}}(\pi\mathcal{I}_{H_i}).
    \end{aligned}
    \end{equation}
    Furthermore, if $\dim_B X=\dim_A X$, then
    $$
    \dim_H X =\dim_B X =\dim_A X.
    $$
\end{theorem}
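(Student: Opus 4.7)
The plan is to prove the three claims of the theorem in sequence: the entropy formula for $\dim_A X$, the equivalence characterising (\ref{e26}), and the Hausdorff dimension collapse.

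For the first claim I would rewrite the quantity $\alpha$ of Theorem \ref{th1} in symbolic entropy language. The number of length-$k$ admissible edge words along an irreducible component $H_i$ grows like $\exp(k\,h_{\textup{top}}(\mathcal{I}_{H_i}))$; the maximal multiplicity of a length-$k$ column word among these grows like $\exp(k\sup_{y\in\pi\mathcal{I}_{H_i}} h_{\textup{top}}(\pi^{-1}(y)\cap\mathcal{I}_{H_i}))$; and the number of length-$k$ column words admissible along $H_j$ grows like $\exp(k\,h_{\textup{top}}(\pi\mathcal{I}_{H_j}))$. The Assouad dimension tests the densest vertical strip at scale $m^{-k}$ after horizontal zoom $n^{-k}$; the extremal strip is built by stacking fibres inside some $H_i$ and then extending the column word through a component $H_j\in\{i\}^+$ whose projection is richest. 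Reading $\alpha_k$ off this optimisation, noting that the $\pi\mathcal{I}_{H_j}$ contribution lives at vertical scale $m^{-k}$ and must therefore be rescaled by $\log n/\log m$, and dividing through by $\log n$ yields the stated formula.

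For the equivalence I would compare the $\dim_A X$ formula term by term with the Fraser--Jurga formula (\ref{e25}). The Ledrappier--Walters inequality
\begin{equation*}
h_{\textup{top}}(\mathcal{I}_{H_i})\le \sup_{y\in\pi\mathcal{I}_{H_i}} h_{\textup{top}}(\pi^{-1}(y)\cap\mathcal{I}_{H_i})+h_{\textup{top}}(\pi\mathcal{I}_{H_i}),
\end{equation*}
together with the obvious monotonicity of $h_{\textup{top}}(\pi\mathcal{I}_{H_j})$ in $j\in\{i\}^+$, shows that each summand of (\ref{e25}) is dominated by the corresponding summand in the Assouad formula, and dominated strictly unless: (a) the same index $i$ is maximising on both sides; (b) the inner maximum $\max_{j\in\{i\}^+}$ in the Assouad formula is attained at $j=i$; and (c) the Ledrappier--Walters inequality saturates on $H_i$. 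These three conditions are exactly the three lines of (\ref{e26}), and sufficiency is immediate by direct substitution.

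For the Hausdorff part I would localise on the saturating component $H_i$. Graph-directed Kenyon--Peres theory (\cite{KP96, F23}) gives
\begin{equation*}
\dim_H \textstyle\bigcup_{v\in V_i} X_v = \sup_\mu \left(\frac{h_\mu(\pi\mathcal{I}_{H_i})}{\log m}+\frac{h_\mu(\mathcal{I}_{H_i})-h_\mu(\pi\mathcal{I}_{H_i})}{\log n}\right),
\end{equation*}
the supremum being over shift-invariant measures $\mu$ on $\mathcal{I}_{H_i}$. Choosing a measure of maximal entropy $\mu$ on the sofic shift $\mathcal{I}_{H_i}$ and applying the third line of (\ref{e26}) together with the Abramov--Rohlin formula forces $h_\mu(\pi\mathcal{I}_{H_i})=h_{\textup{top}}(\pi\mathcal{I}_{H_i})$ and $h_\mu(\mathcal{I}_{H_i})-h_\mu(\pi\mathcal{I}_{H_i})=\sup_y h_{\textup{top}}(\pi^{-1}(y)\cap\mathcal{I}_{H_i})$, so the displayed expression matches the second line of (\ref{e26}) and therefore equals $\dim_A X$. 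Combined with the universal bound $\dim_H X\le \dim_B X=\dim_A X$, this closes the chain.

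The principal obstacle I expect is the symbolic reformulation of the sequence $\alpha_k$, whose definition in Subsection \ref{subsec22} is in terms of the projected dimensions $\{\dim_B\pi(X_v)\}_{v\in V}$: one needs to track how an admissible word can migrate between different irreducible components in exactly the right way to saturate both the fibre and the projection contributions simultaneously. The absence of ROSC complicates the bookkeeping slightly but is harmless for both $\dim_A$ and $\dim_H$, since symbolic overlaps cannot increase either dimension.
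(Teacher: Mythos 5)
Your treatment of the second and third claims essentially matches the paper. The equivalence is obtained, as in the paper, by comparing the component-wise summands of \eqref{e25} with those of the Assouad formula via Bowen's inequality \eqref{e22} (which you misname the Ledrappier--Walters inequality; Ledrappier--Walters is the relative variational principle \eqref{ee}, needed in the measure-theoretic step) together with the observation that $i\in\{i\}^+$, so that equality forces $h_{\textup{top}}(\pi\mathcal{I}_{H_j})=h_{\textup{top}}(\pi\mathcal{I}_{H_i})$ and saturation of \eqref{e22} on $H_i$. Your Hausdorff argument --- take a measure of maximal entropy on $\mathcal{I}_{H_i}$, use the third line of \eqref{e26} to force its projection to have full entropy, and feed it into the Kenyon--Peres variational formula --- is a mild variant of the paper's route through Lemma \ref{le6} and the Fraser--Jurga criterion; both reduce to the same computation and give $\dim_H X\ge\dim_H X^{(i)}=\dim_B X^{(i)}=\dim_B X$. (One caveat: the bound $h(\mu)\le h(\mu\circ\pi^{-1})+\sup_{y}h_{\textup{top}}(\pi^{-1}(y)\cap\mathcal{I}_{H_i})$ is a Bowen/Ledrappier--Walters statement; Abramov--Rohlin by itself is not the right tool here.)

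The genuine gap is in the first claim. The identity to be proved re-expresses $\alpha$ from Theorem \ref{th1}, which is defined through maxima over \emph{finite} column words, in terms of $\sup_{y\in\pi\mathcal{I}_{H_i}}h_{\textup{top}}(\pi^{-1}(y)\cap\mathcal{I}_{H_i})$, a supremum of upper fibre entropies over \emph{infinite} columns. You assert that the maximal multiplicity of a length-$k$ column ``grows like'' $\exp\bigl(k\sup_{y}h_{\textup{top}}(\pi^{-1}(y)\cap\mathcal{I}_{H_i})\bigr)$, but only the inequality ``$\le$'' is immediate; the converse --- that a single infinite $y$ realises along its prefixes the limit $\lim_{k\to\infty}\frac{1}{k}\log\max_{v,\,y\in\Sigma_Y^k}\#\mathcal{I}^k_v(y)$ --- requires a construction, since the maximising finite columns at different lengths need not be compatible. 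One must concatenate near-maximising blocks using connecting paths of bounded length supplied by irreducibility of $H_i$; this is precisely \eqref{e24} in the paper's Lemma \ref{le5}, and without it your formula could a priori undershoot $\dim_A X$. Likewise, the reduction to irreducible components --- $\alpha=\max_i\alpha^{(i)}$ together with $\lambda^{(i)}=\max_{j\in\{i\}^+}h_{\textup{top}}(\pi\mathcal{I}_{H_j})/\log m$ --- is exactly the ``migration between components'' issue you name as the principal obstacle but do not resolve; the paper handles it by bounding the mixed sums over chains of components by products of the $\alpha^{(i_l)}_{s_{i_l}}$ and comparing radii of convergence of the associated power series (Lemmas \ref{le7} and \ref{le8}, following Fraser--Jurga and Kenyon--Peres). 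As written, the first claim is a plausible heuristic rather than a proof, and the remaining two claims depend on it.
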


\begin{theorem}\label{th4}
    When $G$ is an irreducible directed graph, for any graph-directed Bedford-McMullen carpet $X$, we have 
    $$\dim_H X=\dim_B X \quad\emph{ implies }\quad  \dim_B X=\dim_A X.$$ 
    When $G$ is not irreducible, there is a graph-directed Bedford-McMullen carpet $X$ such that
    $$
    \dim_H X =\dim_B X<\dim_A X.
    $$
\end{theorem}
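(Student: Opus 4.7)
The plan is to treat the two statements separately: the implication for irreducible $G$ is reduced to the equivalence already proved in Theorem~\ref{th3}, while the counterexample for non-irreducible $G$ is given by an explicit two-component graph whose three dimensions can be read off directly from \eqref{e25} and Theorem~\ref{th3}.

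For the irreducible case, $r=1$ and $H_1=G$. Abbreviating $\mathcal{I}:=\mathcal{I}_{H_1}$ and $\pi\mathcal{I}:=\pi\mathcal{I}_{H_1}$, formula \eqref{e25} and Theorem~\ref{th3} give closed-form expressions for $\dim_B X$ and $\dim_A X$, and a direct algebraic rearrangement shows that
$$\dim_B X=\dim_A X\Longleftrightarrow h_{\textup{top}}(\mathcal{I})=\sup_{y\in\pi\mathcal{I}}h_{\textup{top}}(\pi^{-1}(y)\cap\mathcal{I})+h_{\textup{top}}(\pi\mathcal{I}),$$
which is precisely the third identity of \eqref{e26}. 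It therefore suffices to show that, when $G$ is irreducible, $\dim_H X=\dim_B X$ forces this identity. I would establish it by invoking a Kenyon--Peres type variational formula for the Hausdorff dimension of irreducible sofic $(\times m,\times n)$-carpets, which expresses $\dim_H X$ as a supremum over shift-invariant measures on $\mathcal{I}$ of a quantity involving both $h_\mu(\sigma)$ and $h_{\pi\mu}(\tilde\sigma)$. A Jensen / H\"older inequality then recovers $\dim_H X\leq\dim_B X$ with equality only when the fiber entropies $h_{\textup{top}}(\pi^{-1}(y)\cap\mathcal{I})$ are essentially constant in $y$ at their supremum --- the graph-directed analogue of the classical ``uniform row'' condition of standard Bedford--McMullen carpets. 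Combined with Theorem~\ref{th3}, this yields the implication.

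For the counterexample, I would take $n=4$, $m=2$, $V=\{v_1,v_2\}$, and place self-loops at $v_1$ with offsets $(x_e,y_e)\in\{(0,0),(1,0)\}$ (both in the bottom row, so $H_1=\{v_1\}$ has $h_{\textup{top}}(\mathcal{I}_{H_1})=\log 2$, $h_{\textup{top}}(\pi\mathcal{I}_{H_1})=0$, and $\sup_y h_{\textup{top}}(\pi^{-1}(y)\cap\mathcal{I}_{H_1})=\log 2$) and self-loops at $v_2$ with offsets $\{(0,0),(0,1)\}$ (so $h_{\textup{top}}(\mathcal{I}_{H_2})=h_{\textup{top}}(\pi\mathcal{I}_{H_2})=\log 2$ and $\sup_y h_{\textup{top}}(\pi^{-1}(y)\cap\mathcal{I}_{H_2})=0$), together with a single transition edge $v_1\to v_2$. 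Then $H_1,H_2$ are the two irreducible components with $\{1\}^+=\{1,2\}$ and $\{2\}^+=\{2\}$, and \eqref{e25} together with Theorem~\ref{th3} directly give $\dim_B X=1$ and $\dim_A X=3/2$. Since $X\supseteq X_{v_2}=\{0\}\times[0,1]$, we have $\dim_H X\geq 1$, while $\dim_H X\leq\dim_B X=1$, so $\dim_H X=1$. This yields the desired counterexample $\dim_H X=\dim_B X<\dim_A X$.

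The main technical obstacle is Part~1: obtaining a clean variational formula for $\dim_H X$ on irreducible graph-directed carpets and executing the convexity argument that forces fiber-uniformity. Although the heuristic parallels the standard Bedford--McMullen case, one must work at the level of topological entropies of sofic subshifts and their $\pi$-fibers, which are not themselves shifts of finite type in general, so extra care is required to handle the supremum over $y\in\pi\mathcal{I}$ and to verify that it is actually attained.
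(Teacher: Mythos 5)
Your second half is fine: the two-component example you construct ($n=4$, $m=2$, a ``row'' component feeding into a ``column'' component) is a legitimate and in fact rather clean counterexample; its values $\dim_B X=1$, $\dim_A X=3/2$ follow from \eqref{e25} and Theorem~\ref{th3} exactly as you say, and $\dim_H X=1$ is immediate from $X_{v_2}=\{0\}\times[0,1]$ together with $\dim_H X\leq\dim_B X$. This differs from the paper's example (which uses $n=4$, $m=3$ and a uniform-fibre component to force $\dim_H=\dim_B=3/2$), but it serves the same purpose and the verification is arguably simpler.

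The first half, however, has a genuine gap at precisely its central step. Your reduction is the same as the paper's: for irreducible $G$ the formulas for $\dim_B$ and $\dim_A$ show that $\dim_B X=\dim_A X$ is equivalent to the entropy identity \eqref{e23}, so everything hinges on proving that $\dim_H X=\dim_B X$ forces \eqref{e23}. Your proposed mechanism --- a Kenyon--Peres variational formula plus a ``Jensen/H\"older'' argument forcing the fibre entropies to be essentially constant at their supremum --- is not substantiated, and it is exactly here that a routine convexity argument breaks down. What the variational machinery gives for free is only the following: if $\dim_H X=\dim_B X$, then (by \cite[Corollary 3.2]{F23}) there is a measure $\mu$ of maximal entropy on $\mathcal{I}$ whose projection $\nu$ has maximal entropy on $\pi\mathcal{I}$, and the Ledrappier--Walters relativized variational principle \cite{LW77} then yields $h_{\textup{top}}(\mathcal{I})=h_{\textup{top}}(\pi\mathcal{I})+\int_{\pi\mathcal{I}}h_{\textup{top}}(\pi^{-1}(y))\,d\nu(y)$. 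Since the integral is a priori strictly smaller than $\sup_{y}h_{\textup{top}}(\pi^{-1}(y))$, this only reproves Bowen's inequality \eqref{e22}; the needed reverse inequality requires knowing that the fibre entropy is $\nu$-a.e.\ equal to its supremum. In the one-vertex case this follows from McMullen's closed formula by strict convexity, but in the sofic/graph-directed case there is no closed formula for $\dim_H$, and establishing fibre-uniformity from $\dim_H X=\dim_B X$ is a nontrivial recent theorem: the paper invokes Feng's result \cite[Theorem 3.1]{F24} (applicable because irreducibility gives weak specification), which says that under $\dim_H X=\dim_B X$ the counts $\#\{x_w: w\in E^k,\ y_w=y\}$ are uniformly comparable over all $y\in\pi\mathcal{I}^k$, hence $h_{\textup{top}}(\pi^{-1}(y))$ is constant in $y$; only then does the Ledrappier--Walters identity upgrade to \eqref{e23}, and Theorem~\ref{th2} closes the loop. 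So your outline identifies the right target but leaves its proof --- the actual content of the paper's Lemma~\ref{le6} --- unproved, and the ``Jensen/H\"older'' step as stated would not deliver it.
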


The organization of this note is as follows. In Section \ref{sec2}, we introduce necessary notations and define the numbers $\alpha$ and $\beta$. In Section \ref{sec3}, we prove Theorem \ref{th1} as well as Corollary \ref{c1} for the Assouad dimension. In Section \ref{sec4}, we prove Theorem \ref{th1}  for the lower dimension. Finally, in Section \ref{sec5}, we prove Theorems \ref{th3} and \ref{th4} for the relationship between distinct dimensions.
\vspace{0.2cm}

\section{Preliminary}\label{sec2}

Let  $\{X_v\}_{v\in V}$ be same as in Theorem \ref{th1}. In this section, we mainly introduce two numbers $\alpha, \beta$ which appear in the expression of Assouad and lower dimensions.

\subsection{Notations and background.}

Firstly, let us introduce some notations. Denote
$$
E^*:=\{ w=w_1\cdots w_k:\omega\in E^\infty, w_i=\omega_i \text{ for all } 1\leq i\leq k, k\in \mathbb{N} \} \cup \{\emptyset\}
$$
the collection of  \textit{ finite admissible words}.
For $w=w_1\cdots w_k \in E^*$, denote $|w|=k$ the \textit{length} of $w$, $i(w):=i(w_1)$, $t(w):=t(w_k)$ the \textit{initial} and \textit{terminal} vertices of $w$, and write $i(w) \stackrel{w}{\to} t(w).$ For $l\leq k$, denote $w|_l=w_1\cdots w_l$. Write
$$
\psi_w=\psi_{w_1}\circ \psi_{w_2}\circ \cdots \circ \psi_{w_k},\quad x_w=x_{w_1}x_{w_2}\cdots x_{w_k},\quad y_{w}=y_{w_1}y_{w_2}\cdots y_{w_k}
$$
for short and denote $\psi_{\emptyset}=id$ (the identity map) by convention. Denote $E^k$ the collection of admissible words of \textit{length} $k$. For $w,w'\in E^*$ with $i(w')=t(w),$ write $ww'$ the \textit{concatenation} of $w$ and $w'.$

For  $v,v'\in V$, we say there exists a \textit{directed path} from $v$ to $v'$ if there exists $w\in E^*$  satisfying $v \stackrel{w}{\rightarrow}v'$ (write simply $v {\rightarrow}v'$ when we do not emphasize $w$). Note that $G=(V,E)$ is irreducible if and only if $v\to v'$ for each distinct pair $v,v'\in V$.

For $w,w'\in E^*$, we write $w\sim w'$ if and only if $\psi_w=\psi_{w'}$. Denote $[w]=\{w'\in E^*:w'\sim w\}$ the equivalence class of $w$. We  use $[E^*]$ to denote the collection of all equivalence classes with respect to `$\sim$', i.e.
$$
    [E^*]=\{[w]:w\in E^*\}.
$$
In a same way, denote $[E^k]=\{[w]:w\in E^k \}$. Denote $t(v,[w])=\{t(w'):w'\in [w],i(w')=v\}$ the collection of terminal vertices of $[w]$ from $v$ and write $t([w])=\bigcup_{v\in V} t(v,[w])$.

Write  $\Sigma_X=\{0,1,\cdots, n-1\}$ and $\Sigma_Y=\{0,1,\cdots, m-1\}$, the \textit{alphabets} along the first and second coordinate axis, respectively.  Denote $\Sigma_X^k$ (resp. $\Sigma_Y^k$) the collection of corresponding words of length $k$, and write $\Sigma_X^0$ (resp. $\Sigma_Y^0$) $=\{\emptyset\}$ for convenience. Let $\Sigma^*_X=\bigcup_{k\geq 0} \Sigma_X^k$ and  $\Sigma^*_Y=\bigcup_{k\geq 0} \Sigma_Y^k$. Clearly, for $w\in E^k$, $x_w\in \Sigma_X^k$, $y_w\in \Sigma_Y^k$.

For $e\in E$, define
$$
\phi_e(\cdot):=\frac{1}{m}(\cdot+y_e).
$$
It is easy to see that $\phi_w=\phi_{w'}$  for $w,w'\in E^k$ with $y_w=y_{w'}$.

\begin{proposition}\label{p1}
    For each $v\in V$,
    the box dimension of $\pi(X_v)$ exists, and
    \begin{equation}\label{e3}
    \dim_H \pi(X_v) =\dim_B \pi(X_v).
    \end{equation}
    In particular, if $G$ is irreducible, we have
    \begin{equation}\label{e1}
    \dim_H \pi(X_v)= \dim_B \pi(X_v)=\dim_H \pi(X_{v'})= \dim_B \pi(X_{v'}), \quad \text{ for } v, v'\in V.
    \end{equation}
\end{proposition}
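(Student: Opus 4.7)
The plan is to recognise $\pi(X_v)$ as a graph-directed self-similar set in $[0,1]$ driven by the same graph $G=(V,E)$ but by the one-dimensional similarities $\phi_e(t)=(t+y_e)/m$ of common contraction ratio $1/m$. Indeed, applying $\pi$ to the defining relation for $X_v$ yields
\[
\pi(X_v)=\bigcup_{e\in E:\,i(e)=v}\phi_e\bigl(\pi(X_{t(e)})\bigr),\qquad v\in V.
\]
Since each $y_e\in\{0,\dots,m-1\}$, iteration gives $\phi_w([0,1])=[m^{-|w|}y_w,\,m^{-|w|}(y_w+1)]$, an $m$-adic interval of length $m^{-|w|}$ addressed by $y_w$. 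Two admissible paths $w,w'\in E^k$ therefore produce \emph{identical} level-$k$ cells when $y_w=y_{w'}$ and cells with disjoint interiors otherwise; this $m$-adic rigidity will stand in for any separation hypothesis throughout.

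Setting $N_k(v):=\#\{y_w:w\in E^k,\,i(w)=v\}$, one sees that $\pi(X_v)$ is covered by exactly $N_k(v)$ closed $m$-adic intervals of length $m^{-k}$ and needs at least a fixed positive multiple of $N_k(v)$ intervals in any cover by sets of diameter $\le m^{-k}$. Hence
\[
\underline{\dim}_B\pi(X_v)=\liminf_{k\to\infty}\frac{\log N_k(v)}{k\log m},\qquad \overline{\dim}_B\pi(X_v)=\limsup_{k\to\infty}\frac{\log N_k(v)}{k\log m}.
\]
To obtain existence of the box dimension I would interpret $N_k(v)$ as the length-$k$ prefix complexity of the one-sided sofic system of $y$-codings of paths in $E^\infty$ starting at $v$, then exploit the fact that each such prefix factors as a length-$k'$ word from $v$ to some intermediate vertex $v'$ concatenated with a length-$(k-k')$ word from $v'$. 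Combined with a decomposition of the subgraph reachable from $v$ into finitely many irreducible components, so that the asymptotic growth rate is dominated by the component of largest topological entropy, a Fekete-type argument then yields existence of the limit.

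For the Hausdorff--box coincidence, which I expect to be the main obstacle because no separating condition (such as ROSC) is assumed, I would distribute total mass $1$ uniformly on the $N_k(v)$ admissible level-$k$ cells of $\pi(X_v)$, pass to a weak-$*$ limit $\mu_v$, and use the same sub-multiplicative estimate that drives the previous step to deduce $\mu_v(I)\le C\,m^{-k\,\dim_B\pi(X_v)}$ for every level-$k$ $m$-adic interval $I$. Billingsley's lemma then delivers $\dim_H\pi(X_v)\ge\dim_B\pi(X_v)$, while the reverse inequality is automatic. The absence of separation is harmless here because overlapping $\phi_e$-images coincide exactly (they depend only on $y_e$), so the mass distribution never over-counts.

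Finally, in the irreducible case pick, for given $v,v'\in V$, an admissible path $w$ from $v'$ to $v$; then $\psi_w(X_v)\subseteq X_{v'}$ forces $\phi_w(\pi(X_v))\subseteq \pi(X_{v'})$, and since $\phi_w$ is a similarity of ratio $m^{-|w|}$ both Hausdorff and box dimensions are preserved, giving $\dim\pi(X_{v'})\ge\dim\pi(X_v)$. Swapping the roles of $v$ and $v'$ yields \eqref{e1}.
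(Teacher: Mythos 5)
Your overall route is genuinely different from the paper's: the paper does not argue directly with counting functions at all, but observes that $\{\pi(X_v)\}_{v\in V}$ is a graph-directed self-similar family for $(V,E,\Phi)$, invokes Das--Ngai \cite{DN04} (the system is of finite type because of exactly the $m$-adic rigidity you point out) to get existence of $\dim_B$ and the equality $\dim_H=\dim_B$ for the \emph{union} $\bigcup_{v}\pi(X_v)$, and then transfers this to each single $\pi(X_v)$ by restricting to the subgraph reachable from $v$, where $v$ is a root, so that $\dim_H\pi(X_v)$ is squeezed between the Hausdorff and box dimensions of the corresponding union. Your first step (the $m$-adic cell structure, $N_{m^{-k}}(\pi(X_v))\asymp N_k(v)$) and your last step (irreducible case via $\phi_w(\pi(X_v))\subseteq\pi(X_{v'})$ and symmetry) are correct and match what the paper implicitly uses.

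The gap is in the middle, specifically in the Hausdorff--box coincidence for a general (possibly reducible) graph. Putting uniform mass on the $N_k(v)$ level-$k$ cells and letting $k\to\infty$, the Frostman bound $\mu_v(I)\le C\,m^{-js}$ for a level-$j$ cell $I$ (with $s=\dim_B\pi(X_v)$) amounts to a \emph{uniform-in-$k$} estimate of the type
\begin{equation*}
\#\{\text{level-}k\text{ cells of }\pi(X_v)\text{ inside }I\}\ \le\ C\,m^{-js}\,N_k(v),
\end{equation*}
which in turn requires a lower-regularity statement such as $N_k(v)\ge c\,m^{js}\,N_{k-j}(v')$ for the terminal vertices $v'$ of $I$. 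This is \emph{not} delivered by "the same sub-multiplicative estimate": submultiplicativity only bounds counts from above, and knowing merely that $\tfrac1k\log N_k(v)\to s\log m$ gives bounds with an $\epsilon$-loss of the form $C_\epsilon m^{-js}m^{(k-j)\epsilon}$, which blow up as $k\to\infty$ for fixed $j$ and so do not survive the weak-$*$ limit. In reducible graphs the counting functions genuinely carry polynomial corrections (e.g.\ a chain of two components of equal projected entropy, with disjoint $y$-alphabets, gives $N_k(v)\asymp k\gamma^k$), and the continuation counts are unevenly distributed among level-$j$ cells, so the needed uniform comparison requires two-sided per-component word-count estimates and a careful gluing/itinerary argument (including period issues in the Fekete step you sketch), none of which is in your write-up. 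The clean repair is either to follow the paper and quote \cite{DN04} together with the root-vertex reduction, or to keep your elementary scheme but run the mass-distribution argument not on $\pi(X_v)$ itself but on a similar copy inside it of the attractor of a reachable irreducible component of maximal projected entropy: within an irreducible component the per-vertex counts are comparable up to constants and satisfy a supermultiplicative gluing inequality, so there the bound $\mu(I)\le Cm^{-js_H}$ does hold uniformly; combining $\dim_H\pi(X_v)\ge \max_H s_H$ with the counting upper bound $\overline{\dim}_B\pi(X_v)\le\max_H s_H$ (your component-decomposition step) then gives both the existence of $\dim_B\pi(X_v)$ and \eqref{e3}.
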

\begin{proof}
   Note that $\{\pi(X_v)\}_{v\in V}$ is a family of graph-directed self-similar sets generated by a graph-directed iterated function system  $(V,E,\Phi)$, where
    \begin{equation}
    \Phi=\{\phi_e:e\in E\}. \nonumber
    \end{equation}
    By \cite[Theorem 2.7]{DN04}, $(V,E,\Phi)$ satisfies a  finite type overlapping condition. Combining this with \cite[Theorem 1.1]{DN04}, we see that  $\dim_B \bigcup_{v\in V} \pi(X_v)$ exists and
    \begin{equation}\label{e2}
       \dim_B \bigcup_{v\in V} \pi(X_v) =\dim_H \bigcup_{v\in V} \pi(X_v).
    \end{equation}

    For each $v\in V$, if $v$ is a \textit{root} of $G$ (i.e. $v\to v'$ for all other vertices $v'$ in $V$), we have
    $$
    \dim_H \bigcup_{v'\in V} \pi(X_{v'}) = \dim_H \pi(X_v) \leq \overline{\dim}_B \pi(X_v) \leq {\dim}_B \bigcup_{v'\in V} \pi(X_{v'}).
    $$
   So, $\dim_B \pi(X_v)$ exists and \eqref{e3} holds,  by \eqref{e2}.
   \vspace{0.2cm}

   If $v$ is not a root of $G$, define
    $$
    V_v=\{v'\in V:v\to v'\}\cup \{v\}, \quad E_v=\{e\in E:i(e),t(e)\in V_v\}, \quad  \Phi_v=\{\phi_e\in \Phi:e\in E_v\}.
    $$
    Clearly, $\pi(X_v)$ belongs to the family of attractors generated by $(V_v,E_v,\Phi_v)$, and $v$ is a root of $(V_v,E_v)$. By a  same argument as above, we still get \eqref{e3}.

    If $G$ is irreducible, it is straightforward to see \eqref{e1}.
\end{proof}

\subsection{Two numbers $\alpha$ and $\beta$}\label{subsec22}

Let $ \eta: V\times \bigcup_{k\geq 0} (\Sigma_X^k\times \Sigma_Y^k) \to \{\dim_B\pi (X_v):v\in V\}\cup \{0\}$ be a function defined by
\begin{equation}\label{e12}
\eta(v,x,y)=\left \{ \begin{aligned}
    &\dim_B \pi\left(X_v\cap \psi_w((0,1)^2)\right) & &\text{if } i(w)=v, (x_w,y_w)=(x,y) \text{ for some }w\in E^*,
    \\
    &0 & &\text{otherwise}.
\end{aligned} \right.
\end{equation}
Clearly, $\eta(v,\emptyset,\emptyset)=\dim_B \pi(X_v)$, $\eta(v,x_w,y_w)=\eta(v,x_{w'},y_{w'})$ for $w\sim w'$, and
\begin{equation}\label{e17}
\eta(v,x_w,y_w) =\dim_B \bigcup_{v'\in t(v,[w])} \pi(X_{v'})=\max_{v'\in t(v,[w])} \dim_B \pi(X_{v'}).
\end{equation}
For $w,w'\in E^*$ with $t(w)=i(w')$, it is not hard to check that
\begin{equation}\label{e11}
\eta(i(w),x_w,y_w)\geq \eta({i(w)},x_{ww'},y_{ww'})=\max_{v\in t(i(w),[w])}\eta(v,x_{w'},y_{w'}).
\end{equation}

Define $$\tilde{V}=\{v\in V:\text{ there exists } w\in E^* \text{ with } |w|\geq \#V \text{ and } t(w)=v\}$$
the collection of vertices in $V$ which are terminal vertices of some admissible words with length at least $\#V$.

For $k\geq 1$, define
$$
\alpha_k:=\max_{v\in \tilde{V}}\max_{y \in \Sigma_Y^k} \sum_{[w]\in [E^k]:\atop i(w)=v, y_{w}=y} n^{k\eta(v,x_{w},y_{w})}.
$$

\begin{lemma}\label{lim}
    The limit of $\{(\alpha_k)^{1/k}\}$ exists and equals $\inf_{k\geq 1} (\alpha^k)^{1/k}$.
\end{lemma}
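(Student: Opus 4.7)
The plan is to prove submultiplicativity of $\{\alpha_k\}_{k\geq 1}$, namely $\alpha_{k+l}\leq \alpha_k\alpha_l$ for all $k,l\geq 1$, after which Fekete's lemma applied to the subadditive sequence $\{\log\alpha_k\}$ yields both the existence of $\lim_{k\to\infty}(\alpha_k)^{1/k}$ and its equality with $\inf_{k\geq 1}(\alpha_k)^{1/k}$.

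To establish submultiplicativity, I would fix $v\in\tilde V$ and split $y=y^{(1)}y^{(2)}\in\Sigma_Y^{k+l}$ with $y^{(1)}\in\Sigma_Y^k$ and $y^{(2)}\in\Sigma_Y^l$. Each $[w]\in[E^{k+l}]$ with $i(w)=v,\ y_w=y$ determines a unique pair $([u],[\bar u])\in[E^k]\times[E^l]$ by decomposing any representative $w=u\bar u$ with $|u|=k$, since $[u]$ depends only on $(x_u,y_u)$ and $[\bar u]$ only on $(x_{\bar u},y_{\bar u})$, and these halves are determined by $(x_w,y_w)$. A crucial preliminary observation is that $v\in\tilde V$ forces every $v^*\in t(v,[u])$ also to lie in $\tilde V$: prepending a witness word of length $\geq \#V$ ending at $v$ to a representative of $[u]$ from $v$ to $v^*$ yields an admissible word of length $\geq \#V$ ending at $v^*$. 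Consequently the uniform bound $\alpha_l(v^*,y^{(2)})\leq \alpha_l$ applies.

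Next I would invoke the composition identity in \eqref{e11}, namely $\eta(v,x_w,y_w)=\max_{v^*\in t(v,[u])}\eta(v^*,x_{\bar u},y_{\bar u})$, together with $\eta(v,x_w,y_w)\leq \eta(v,x_u,y_u)$; letting $v^*_0\in t(v,[u])$ denote the achieving argmax, these give
\[
 n^{(k+l)\eta(v,x_w,y_w)} \leq n^{k\eta(v,x_u,y_u)}\cdot n^{l\eta(v^*_0,x_{\bar u},y_{\bar u})}.
\]
Summing over $[w]\in[E^{k+l}]_{v,y}$ and reindexing the resulting pairs $([u],[\bar u])$ by their argmax $v^*_0$ controls the inner sum over second-half classes by $\alpha_l$, yielding $\alpha_{k+l}(v,y)\leq \alpha_k(v,y^{(1)})\cdot \alpha_l$. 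Taking the maximum over $v\in\tilde V$ and $y$ produces the desired inequality $\alpha_{k+l}\leq \alpha_k\alpha_l$, whereupon the standard Fekete argument concludes the lemma.

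The main obstacle is the careful bookkeeping of equivalence classes in the absence of any separation condition: a single class $[u]$ can have several terminal vertices in $t(v,[u])$, and one must ensure that the partitioning of pairs $([u],[\bar u])$ by their argmax vertex does not introduce a spurious multiplicative factor of $|t(v,[u])|$. The decisive point is that the argmax $v^*_0$ itself belongs to $\tilde V$, so the uniform bound $\alpha_l(v^*_0,y^{(2)})\leq\alpha_l$ can be invoked after the partitioning to close the estimate cleanly.
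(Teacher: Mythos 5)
Your overall strategy (decompose $[w]$ into $([u],[\bar u])$ via the digit strings, use \eqref{e11}, then Fekete) is the same as the paper's, and two of your ingredients are sound: the splitting is well defined because $[w]$ is determined by $(x_w,y_w)$, and your observation that $v\in\tilde V$ forces $t(v,[u])\subseteq\tilde V$ is correct --- indeed slightly cleaner than the paper, which instead restricts to $k\ge\#V$ to guarantee $t(v,[w])\subseteq\tilde V$. The gap is in the step you yourself flag as the crux. After fixing $[u]$, the argmax vertex $v_0^*$ depends on $[\bar u]$, so partitioning the second-half classes by their argmax only gives
$\sum_{v^*\in t(v,[u])}\sum_{[\bar u]:\,v_0^*([\bar u])=v^*} n^{l\eta(v^*,x_{\bar u},y_{\bar u})}\le \#t(v,[u])\cdot\alpha_l\le \#V\,\alpha_l$:
the fact that each $v_0^*$ lies in $\tilde V$ guarantees that each block of the partition is bounded by $\alpha_l$, but does nothing to prevent the number of blocks from entering multiplicatively. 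Moreover, constant-free submultiplicativity can genuinely fail: if a single class $[u]$ has two terminal vertices from which the admissible length-$l$ continuations with the prescribed $y$-digits carry disjoint families of $x$-digit strings, the sum defining $\alpha_{k+l}$ collects both families while $\alpha_k\alpha_l$ does not, so $\alpha_{k+l}\le\alpha_k\alpha_l$ is not available in general.

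What your argument does yield is exactly the paper's inequality $\alpha_{k+l}\le\#V\,\alpha_k\alpha_l$ (now for all $k,l$, thanks to your $\tilde V$ observation). That suffices for the substantive content of the lemma: Fekete applied to the submultiplicative sequence $\#V\alpha_k$ gives the existence of $\lim_k(\alpha_k)^{1/k}$, which is all that is used later (via \eqref{e6} and \eqref{e10}). With the constant present, however, the limit is identified as $\inf_k(\#V\alpha_k)^{1/k}$ rather than $\inf_k(\alpha_k)^{1/k}$, so your proposal as written does not establish the literal ``equals the infimum'' clause (the paper's own proof is equally loose on this point). To repair the write-up, either keep the factor $\#V$ and conclude only the existence of the limit, or provide a genuinely new argument for the constant-free bound --- the argmax reindexing alone does not deliver it.
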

\begin{proof}
    For $k,l$ in $\mathbb{N}$ with $k\geq \#V$, we have
    $$
    \begin{aligned}
    \alpha_{k+l}&=\max_{v\in \tilde{V}} \max_{y\in \Sigma_Y^{k+l}} \sum_{[w]\in [E^{k+l}]:\atop i(w)=v, y_{w}=y} n^{(k+l) \eta(v,x_{w},y_{w})}
    \\
    &\leq \max_{v\in \tilde{V}} \max_{y \in \Sigma_Y^k,y'\in \Sigma_Y^l} \sum_{[w]\in [E^k]:\atop i(w)=v,  y_{w}=y} n^{k\eta(v,x_{w},y_{w})}  \sum_{[w']\in [E^l]:\atop i(w')\in t(v,[w]), y_{w'}=y'}\max_{v'\in t(v,[w])} n^{l\eta(v',x_{w'},y_{w'})}
    \\
    &\leq \max_{v\in \tilde{V}} \max_{y \in \Sigma_Y^k,y'\in \Sigma_Y^l} \sum_{[w]\in [E^k]:\atop i(w)=v,  y_{w}=y} n^{k\eta(v,x_{w},y_{w})}\sum_{v'\in t(v,[w])} \sum_{[w']\in [E^l]:\atop i(w')=v' , y_{w'}=y'} n^{l\eta(v',x_{w'},y_{w'})}
    \\
    &\leq\#V \max_{v\in \tilde{V}} \max_{y \in \Sigma_Y^k}  \sum_{[w]\in [E^k]:\atop i(w)=v,  y_{w}=y} n^{k\eta(v,x_{w},y_{w})}    \max_{v'\in \tilde{V}} \max_{y' \in \Sigma_Y^l}  \sum_{[w']\in [E^l]:\atop i(w')=v', y_{w'}=y'} n^{l\eta(v',x_{w'},y_{w'})}
    \\
    &=\#V \alpha_k\alpha_l,
    \end{aligned}
    $$
    where the second line follows from \eqref{e11}, and the forth line follows from the fact that $t(v,[w])\subseteq \tilde{V}$ since $|w|\geq \#V$. Therefore, $\{\alpha_k\}_{k\geq 1}$ is a submultiplicative sequence and the lemma follows.
\end{proof}

Write
\begin{equation}
    \alpha=\lim_{k\to \infty}(\alpha_k)^{1/k}.\nonumber
\end{equation}
In Section \ref{sec3}, we will prove that the Assouad dimension of $X$ is $\frac{\log \alpha}{\log n}.$
\vspace{0.2cm}

Let
$
\theta: V\times \bigcup_{k\geq 0} (\Sigma_X^k\times \Sigma_Y^k )\times \Sigma_Y^* \to \{\dim_B\pi (X_v):v\in V\}\cup\{0\}
$ be a function  defined by
$$
\theta(v,x,y,y')=\left \{ \begin{aligned}
    &\eta(v,x,y) & &\text{if } y'=\emptyset,
    \\
    &
\dim_B \pi\left(X_v\cap \psi_w((0,1)^2)\right)\cap \phi_w(I_{y'}) & & \text{if } y'\neq \emptyset,
\\
&0 & & \text{otherwise},
\end{aligned} \right.
$$
where $w$ is same as that in the definition of $\eta$ in \eqref{e12} and
\begin{equation}\label{e18}
I_{y'}=\left(\sum_{i=1}^l \frac{y_i'}{m^i}, \sum_{i=1}^l \frac{y'_i}{m^i}+\frac{1}{m^l}\right),\quad  \text{ for }y'=y'_1\cdots y'_l,\ l\geq 1.
\end{equation}
Clearly, $$\theta(v,x_w,y_w,y')=\dim_B \big(\bigcup_{w'\in E^l: y_{w'}=y',\atop i(w')\in t(v,[w])} \phi_w\circ\phi_{w'}(\pi(X_{t(w')})) \big)=\max_{w'\in E^l:  y_{w'}=y' ,\atop i(w')\in t(v,[w]) }\dim_B\pi (X_{t(w')}).$$

For $k\geq 1$, define
\begin{align*}
\beta_k=\min_{w\in E^*:\atop |w|\geq \#V}\max_{v\in t([w])}\Big\{ \min \big \{    &\sum_{[w']\in [E^k]:\atop i(w')=v, y_{w'}=y}  n^{k\theta(v,x_{w'},y_{w'},y')}\cdot {1}_{\{\theta(v,x_{w'},y_{w'},y')>0\}}:
\\
&\sum_{[w']\in [E^k]:\atop i(w')=v, y_{w'}=y} \theta(v,x_{w'},y_{w'},y')>0, y\in \Sigma_Y^k,y'\in \Sigma_Y^* \big \}\Big\}.
\end{align*}
Write
\begin{equation}\label{el}
 \beta=\liminf_{k\rightarrow \infty} (\beta_k)^{1/k}.
\end{equation}
In Section \ref{sec4}, we will prove that the lower dimension of $X$ is $\frac{\log \beta}{\log n}.$
\begin{remark}
    For  $v\in V$, $w\in E^*$, $i(w)=v$, we have
    $$
    \pi\left(X_v\cap \psi_w((0,1)^2)\right) \cap \phi_w(I_{y'}) \subseteq \pi\left(X_v\cap \psi_w([0,1]^2)\right),
    $$
    which gives  $\eta(v,x_w,y_w)=\theta(v,x_w,y_w,\emptyset)\geq \theta(v,x_w,y_w,y')$, for all $y'$ in $ \Sigma_Y^*$. In addition,
    $$
    \alpha_k=\max_{v\in \tilde{V}}\max_{y\in \Sigma_Y^k} \max_{y'\in \Sigma_Y^*}  \sum_{[w]\in [E^k]:\atop i(w)=v, y_{w}=y } n^{k\theta(v,x_{w},y_{w},y')}.
    $$
\end{remark}

\subsection{Approximate squares}

Before proceeding, let us look at the definition of Assouad dimension. Given a bounded set $F\subseteq \mathbb{R}^{d}$, the \textit{Assouad dimension} of $F$ is
     $$
     \begin{aligned}
    \dim_{A}F=\inf\Big \{s\geq0:\exists C>0, &\text{ for any } 0<r<R\leq 1 \text{ and }  x\in F,\\
    &\mathcal{N}_r\big (F\cap B(x,R)\big )\leq C\left(\frac{R}{r}\right)^s\Big \},
    \end{aligned}
    $$
    where $\mathcal{N}_r(E)$ is the least number of balls of radius $r$ covering the set $E$, and $B(x, R)$ is the ball with center  $x$ in $\mathbb{R}^d$ and radius $R>0$. See \cite{F21} for more details.
    \vspace{0.2cm}

An important concept in the dimension theory of self-affine sets is the so called `approximate square'. For $k\geq 1$, choose $l\leq k$ so that $n^l\leq m^k<n^{l+1}$, i.e. $l=\lfloor k\log_nm\rfloor$, the maximal integer less than or equal to $k \log_n m$. For $p,q$ in $\mathbb{Z}$, denote
$$
Q_k(p,q)=[\frac{p}{n^l},\frac{p+1}{n^l}]\times[\frac{q}{m^k},\frac{q+1}{m^k}]
$$
the \textit{approximate square} of level $k$ at $(p,q)$.

For a bounded set $E\subseteq \mathbb{R}^2$, for $k\geq 1$, let $N_{k}(E)$  be the least number of  elements in $\{Q_{k}(p,q):p,q\in \mathbb{Z}\}$ covering $E$. In a standard way, we may replace $\mathcal{N}_r(X\cap B(x,R))$ with  $N_{k'}(X\cap Q_{k}(p,q))$ in the definition of Assouad dimension of $X$, i.e.
$$
     \begin{aligned}
    \dim_{A}X =\inf\Big \{s\geq0:\exists C>0, \text{ for any } k'> k &\text{ and }  Q_k(p,q) \text{ with } X\cap Q_k(p,q) \neq \emptyset,\\
    &{N}_{k'}\big (X\cap Q_k(p,q)\big )\leq C m^{(k'-k)s}\Big \}.
    \end{aligned}
    $$

    Similar to the $2$-dimensional case, for a bounded set $E\subseteq \mathbb{R}$, $k\geq 1$, we write $N_{k}\big (E)$ the least number of intervals in the form $[\frac{i}{m^{k}},\frac{i+1}{m^{k}} ]$ ($i$  $\in \mathbb{Z}$) covering $E$.

\vspace{0.2cm}

    \begin{lemma}\label{le3}
        For any $\epsilon>0,$ there exists a constant $c_1>0$, such that for each $v\in V$, $w\in E^*$ with $i(w)=v$,   $k\geq 1$ and $y\in \Sigma_Y^* \setminus \{\emptyset\}$, we have
        \begin{equation}\label{e13}
        c_1^{-1}m^{k(\eta(v,x_w,y_w)-\epsilon)}\leq N_{k+|w|}\left(\pi(X_v \cap \psi_w((0,1)^2))\right) \leq c_1 m^{k(\eta(v,x_w,y_w)+\epsilon)}
        \end{equation}
        and
        \begin{equation}\label{e14}
        c_1^{-1}m^{k(\theta(v,x_w,y_w,y)-\epsilon)}\leq N_{k+|w|+|y|}\left(\pi(X_v \cap \psi_w((0,1)^2))\cap \phi_w(I_y)\right) \leq c_1 m^{k(\theta(v,x_w,y_w,y)+\epsilon)},
        \end{equation}
        for those
        $$
        I_y=\big (\sum_{i=1}^{|y|}\frac{y_i}{m^i},\sum_{i=1}^{|y|}\frac{y_i}{m^i}+\frac{1}{m^{|y|}}\big )
        $$
        satisfying $\pi(X_v \cap \psi_w((0,1)^2))\cap \phi_w(I_y) \neq \emptyset$.
    \end{lemma}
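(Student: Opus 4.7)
The plan is to reduce both bounds to box-dimension estimates on a finite family of subsets of $[0,1]$. Using $\pi \circ \psi_w = \phi_w \circ \pi$ together with the grid structure (two level-$|w|$ cells $\psi_{w'}([0,1]^2)$ and $\psi_w([0,1]^2)$ coincide when $\psi_{w'}=\psi_w$ and otherwise have disjoint interiors, so that only $w'\sim w$ contribute to $\psi_w((0,1)^2)$), I first establish the identity
\[
\pi\bigl(X_v \cap \psi_w((0,1)^2)\bigr) = \phi_w(E_{v,[w]}),\qquad E_{v,[w]}:=\bigcup_{v' \in t(v,[w])}\pi\bigl(X_{v'}\cap(0,1)^2\bigr).
\]
A direct calculation shows that $\phi_w$ sends the $m$-adic grid of level $k$ on $[0,1]$ bijectively onto the $m$-adic grid of level $k+|w|$ on $\phi_w([0,1])$; hence $N_{k+|w|}(\phi_w(E))=N_k(E)$ for every bounded $E\subseteq[0,1]$, and in particular
\[
N_{k+|w|}\bigl(\pi(X_v\cap\psi_w((0,1)^2))\bigr) = N_k(E_{v,[w]}).
\]

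The principal obstacle is securing a single constant $c_1$ that works for all $v$, $w$ and $k$ simultaneously, and this is resolved by observing that $E_{v,[w]}$ depends only on $t(v,[w])\subseteq V$, so it ranges over at most $2^{\#V}$ possibilities as $v$ and $w$ vary. For each such $E_{v,[w]}$, equation (2.6) together with the definition of $\eta$ identifies its box dimension as $\eta(v,x_w,y_w)$, and the definition of box dimension supplies a constant giving $c^{-1}m^{k(\eta-\epsilon)}\le N_k(E_{v,[w]})\le c\, m^{k(\eta+\epsilon)}$ for all $k\ge 1$. Taking $c_1$ to be the maximum of these finitely many constants proves \eqref{e13}. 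Any small-$k$ effect, or boundary discrepancy between $\pi(X_{v'}\cap(0,1)^2)$ and $\pi(X_{v'})$ in the dimension computation, is absorbed into $c_1$ since only finitely many distinct underlying sets occur.

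For \eqref{e14} I would iterate the decomposition once more. After applying $\phi_w^{-1}$, the set $\pi(X_v\cap\psi_w((0,1)^2))\cap\phi_w(I_y)$ pulls back to $E_{v,[w]}\cap I_y$; decomposing further using the $y$-labelled admissible paths of length $|y|$ starting from vertices in $t(v,[w])$, and using again that distinct level-$|y|$ $m$-adic intervals are either equal or have disjoint interiors, expresses this intersection as $\phi_y(F)$ for a union $F=\bigcup_{v''\in S}\pi(X_{v''}\cap(0,1)^2)$ with $S\subseteq V$. Applying the $m$-adic alignment property twice reduces $N_{k+|w|+|y|}$ of the original set to $N_k(F)$; the identity immediately following the definition of $\theta$ identifies $\dim_B F$ with $\theta(v,x_w,y_w,y)$; and since $F$ again ranges over at most $2^{\#V}$ configurations, the same maximum-over-configurations argument delivers \eqref{e14}.
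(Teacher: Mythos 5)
Your proposal is correct and follows essentially the same route as the paper: both exploit that $\phi_w$ maps the level-$k$ $m$-adic grid onto the level-$(k+|w|)$ grid so that $N_{k+|w|}(\phi_w(E))=N_k(E)$, decompose $\pi(X_v\cap\psi_w((0,1)^2))$ (and, for \eqref{e14}, its intersection with $\phi_w(I_y)$) into a finite union of copies of the $\pi(X_{v'})$ over terminal vertices, and then obtain a uniform constant from box-dimension estimates on finitely many sets. The only difference is bookkeeping: the paper fixes one constant per vertex via \eqref{e5} and uses the union bound $\max\leq N\leq \#V\max$ together with \eqref{e17}, while you apply the box-dimension definition directly to each of the at most $2^{\#V}$ possible union sets, which amounts to the same estimate.
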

    \begin{proof}

    For $v\in V$, write $\lambda_v:=\dim_B\pi(X_v)$ for short. It follows immediately from the definition of box dimension that there exists a constant $C>0$, such that for $k\geq 1$ we have
    \begin{equation}\label{e5}
    C^{-1} m^{k(\lambda_v-\epsilon)}\leq N_k(\pi(X_v))\leq C m^{k(\lambda_v+\epsilon)}, \quad \text{ for all }v\in V.
    \end{equation}
    Note that
    $$
    \begin{aligned}
    N_{k+|w|}\left(\pi(X_v \cap \psi_w((0,1)^2)\right)=N_{k+|w|}\big({\phi_w(\bigcup_{v'\in t(v,[w])} \pi(X_{v'})}) \big)
    =N_k\big({\bigcup_{v'\in t(v,[w])} \pi(X_{v'})} \big).
    \end{aligned}
    $$
    Combining  \eqref{e17} and \eqref{e5}, we have
    $$
    \begin{aligned}
    N_{k+|w|}\left(\pi(X_v \cap \psi_w((0,1)^2)\right) &\leq \#V \max_{v'\in t(v,[w])} N_k(\pi(X_{v'}))\\
    &\leq \# V \max_{v'\in t(v,[w])}C m^{k(\lambda_{v'}+\epsilon)}\\
    &=\# V \cdot C m^{k(\eta(v,x_w,y_w)+\epsilon)},
    \end{aligned}
    $$
    and
    $$
    \begin{aligned}
    N_{k+|w|}\left(\pi(X_v \cap \psi_w((0,1)^2)\right) &\geq \max_{v'\in t(v,[w])} N_k(\pi(X_{v'}))\\
    &\geq  \max_{v'\in t(v,[w])}C^{-1} m^{k(\lambda_{v'}-\epsilon)}\\
    &=C^{-1} m^{k(\eta(v,x_w,y_w)-\epsilon)},
    \end{aligned}
    $$
    which gives \eqref{e13} by taking $c_1=\# V \cdot C$.
    \vspace{0.2cm}

    Noticing that
    $$
    \begin{aligned}
    N_{k+|w|+|y|}\left(\pi(X_v \cap \psi_w((0,1)^2)\cap \phi_w(I_y) \right)&=N_{k+|w|+|y|}\big({\phi_{w} (\bigcup_{w'\in E^l:y_{w'}=y,\atop i(w')\in t(v,[w])} \phi_{w'}\circ \pi(X_{t(w')})}) \big)
    \\
    &=N_{k}\big( \bigcup_{w'\in E^l:y_{w'}=y, \atop i(w')\in t(v,[w])}  \pi(X_{t(w')}) \big),
    \end{aligned}
    $$
    \eqref{e14} follows in a similar way.
    \end{proof}
\vspace{0.2cm}

\section{Assouad dimension}\label{sec3}
The main aim in this section is to  prove Theorem \ref{th1} for the Assouad dimension.

For $k\geq 1$, Write $$\tau_k=\max_{v\in V}\#\{[w]:w\in E^k,i(w)=v\}.$$
The following lemma is useful for the upper bound estimate of Assouad dimension.

\begin{lemma}\label{le1}
    The limit of $\{(\tau_k)^{1/k}\}_{k\geq 1}$ exists and $\tau:=\lim_{k\to \infty}(\tau_k)^{1/k} \leq \alpha$.
\end{lemma}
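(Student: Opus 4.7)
The lemma has two parts: existence of $\tau=\lim_k\tau_k^{1/k}$, and the inequality $\tau\le\alpha$.

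For existence, the plan is to verify submultiplicativity $\tau_{k+l}\le\#V\cdot\tau_k\cdot\tau_l$ and then invoke Fekete's lemma. Given $w\in E^{k+l}$ with $i(w)=v$, write $w=w_1w_2$ with $|w_1|=k$ and $|w_2|=l$; since $\psi_w=\psi_{w_1}\circ\psi_{w_2}$, the equivalence class $[w]$ is determined by the pair $([w_1],[w_2])$. There are at most $\tau_k$ choices for $[w_1]$; given $[w_1]$, the intermediate vertex $i(w_2)=t(w_1)$ lies in $t(v,[w_1])\subseteq V$, a set of size at most $\#V$; and for each such vertex there are at most $\tau_l$ classes of continuations. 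So $\{\log(\#V\cdot\tau_k)\}$ is subadditive, and Fekete's lemma yields $\tau=\inf_k(\#V\tau_k)^{1/k}$.

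For $\tau\le\alpha$, I first reduce to $v\in\tilde V$: for $k\ge\#V$, any $w\in E^k$ starting at some $v^*\in V$ factors as $w=w_0w'$ with $|w_0|=\#V$, so $t(w_0)\in\tilde V$ automatically by definition, giving $\tau_k\le\tau_{\#V}\cdot\tilde\tau_{k-\#V}$ with $\tilde\tau_k=\max_{v\in\tilde V}\#\{[w]:i(w)=v\}$; hence the two exponential growth rates agree. Then for $v\in\tilde V$, decompose by $y$-fiber, $\tilde\tau_k(v)=\sum_y g(v,y)$ with $g(v,y)=\#\{[w]:i(w)=v,y_w=y\}$. Two estimates apply: the count of admissible $y$ is at most $N_k(\pi X_v)\le c_1 m^{k(\lambda_v+\epsilon)}$ by Proposition \ref{p1} and Lemma \ref{le3} (where $\lambda_v=\dim_B\pi X_v$), and $g(v,y)\le\alpha_k$ for each $y$ since each term $n^{k\eta(v,x_w,y_w)}\ge 1$ in the defining sum. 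The naive combination gives $\tilde\tau_k\le c_1 m^{k(\lambda_v+\epsilon)}\alpha_k$ and only $\tau\le m^{\lambda_v}\alpha$, which is too weak.

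The required refinement exploits the aspect ratio $n>m$ that is hidden in the weight $n^{k\eta}$. By a two-scale covering of $\pi X_v=\bigcup_{[w]}\phi_w(\pi X_{t(w)})$ at scales $m^{-k}$ and $n^{-k}=m^{-k\log n/\log m}$, Lemma \ref{le3} yields
\[
\sum_{[w]:\,i(w)=v}n^{k\eta(v,x_w,y_w)}\gtrsim (mn)^{k\lambda_v}\cdot m^{-k\epsilon'},
\]
by comparing $N_{k+k\log n/\log m}(\pi X_v)\ge c_1^{-1}m^{(k+k\log n/\log m)(\lambda_v-\epsilon)}$ with the subadditive estimate $N_{k+k\log n/\log m}(\pi X_v)\le\sum_{[w]}N_{k\log n/\log m}(\pi X_{t(w)})\lesssim\sum_{[w]}n^{k\eta}$. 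Combined with $\sum_{[w]}n^{k\eta}\le|Y_v(k)|\alpha_k\le c_1 m^{k(\lambda_v+\epsilon)}\alpha_k$, this forces $\alpha_k\gtrsim n^{k\lambda_v}m^{-k\epsilon''}$, and feeding this back into the fiber decomposition---via a Jensen-type lower bound $\sum n^{k\eta}\ge\tilde\tau_k\cdot n^{k\bar\eta}$ with $\bar\eta\ge\lambda_v\log m/\log n$---produces $\tilde\tau_k\le c\,m^{k\epsilon}\alpha_k$. Taking $k$-th roots and sending $\epsilon\to 0$ yields $\tau\le\alpha$. The main obstacle is this last refinement: converting the naive $m^{\lambda_v}\alpha$ into $\alpha$ by extracting the implicit factor $n^{k\lambda_v}$ inside $\alpha_k$ that cancels the $m^{k\lambda_v}$ count of admissible $y$'s, which is precisely where $n>m$ is used in an essential way.
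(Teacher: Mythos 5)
Your existence argument (submultiplicativity $\tau_{k+l}\le\#V\,\tau_k\tau_l$ plus Fekete) is fine and is exactly how the paper handles that part. The gap is in the second half, and it sits precisely where you flagged it: the ``Jensen-type'' step. From Jensen you get $\sum_{[w]:i(w)=v}n^{k\eta}\ge\tilde\tau_k(v)\,n^{k\bar\eta}$, and combined with $\sum_{[w]:i(w)=v}n^{k\eta}\lesssim m^{k(\lambda_v+\epsilon)}\alpha_k$ you need $\bar\eta\ge\lambda_v\log_n m$ to cancel the $m^{k\lambda_v}$; but this bound on the \emph{average} of $\eta$ over equivalence classes is false in general. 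Take $V=\{a,b\}$ with edges from $a$: a loop $(0,y)$ at $a$ for every $y\in\Sigma_Y$ and all $mn$ edges $(x,y)$ from $a$ to $b$; from $b$ only the loop $(0,0)$. Then $\lambda_a=1$, $\lambda_b=0$; among the length-$k$ classes starting at $a$ there are $m^k$ classes with $\eta=1$ (those with $x_w=0^k$, which contain a word staying at $a$) and about $(n-1)m^{k+1}/(m-1)$ classes with $\eta=0$ (those using a nonzero $x$-digit, hence ending at $b$), so $\bar\eta=\bigl(1+(n-1)m/(m-1)\bigr)^{-1}$, which is already below $\log_n m$ for $(m,n)=(2,3)$ and is of order $1/n$ in general. (The conclusion $\tau\le\alpha$ still holds there, since $\tau=m$ and $\alpha=n$; it is your mechanism that breaks.) Nor can your derived bound $\sum_{[w]}n^{k\eta}\gtrsim(mn)^{k\lambda_v}$ rescue the average: a lower bound on the sum can be carried by a few large terms (in the example the single $\eta=1$ class in each fiber contributes $n^k$ while the many $\eta=0$ classes contribute $1$), so it gives no lower bound on $\bar\eta$; and feeding $\alpha_k\gtrsim n^{k(\lambda_v-\epsilon)}$ back into $\tilde\tau_k\lesssim m^{k(\lambda_v+\epsilon)}\alpha_k$ only yields bounds of the type $\tau\le m^{\lambda_v}\alpha\le\alpha^2$, which you already identified as too weak.

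For comparison, the paper proves $\tau\le\alpha$ by a sandwich through the box dimension, avoiding fiber counts and averages altogether: first $\frac{\log\tau}{\log n}\le\dim_B X$, because distinct classes $[w]$ with $w\in E^{l}$, $l=\lfloor k\log_n m\rfloor$, give distinct lattice rectangles each requiring its own level-$k$ approximate square, so $N_k(X)\gtrsim\tau_l$; second $\dim_B X\le\frac{\log\alpha}{\log n}$, by covering $X$ with level-$l(k)$ approximate squares and counting the level-$k$ squares inside each one via Lemma \ref{le3}, which gives $N_k(X)\lesssim m^{l(k)(\dim_B X+\epsilon)}\alpha_{l(k)-l'(k)}$ and self-improves to the stated inequality. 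Unless you can replace the average bound $\bar\eta\ge\lambda_v\log_n m$ by a correct input, your route does not close.
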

\begin{proof}
The existence of the limit of $\{(\tau_k)^{1/k}\}_{k\geq 1}$ follows in a similar way as the proof of Lemma \ref{lim}.

We prove $\tau \leq \alpha$ through following two steps: (i) $\frac{\log \tau}{\log n} \leq {\dim}_B X$, (ii) ${\dim}_B X\leq \frac{\log \alpha}{\log n}$ (the existence of $\dim_BX$ is ensured in \cite{F23}).

    \vspace{0.2cm}
    \textit{Step (i): $\frac{\log \tau}{\log n} \leq {\dim}_B X$.}
    \vspace{0.2cm}

    By the definition of $\tau$, for $\epsilon>0$, there exists $C>0$ such that for $l\geq 1$, we have
    $$
    \max_{v\in V}\#\{[w]:w\in E^l,i(w)=v\}=\tau_l\geq C(\tau-\epsilon)^l.
    $$
    For each $k\geq 1,$ choosing $l=\lfloor k\log_n m\rfloor$, we have
    $$
    N_{k}(X) \geq\max_{v\in V} N_{k}(X_v) \geq C (\tau-\epsilon)^l.
    $$
    So ${\dim}_B X\geq \frac{\log(\tau-\epsilon)}{\log n}$, thus
    ${\dim}_B X \geq \frac{\log \tau}{\log n}$ by the arbitrary of $\epsilon$.

    \vspace{0.2cm}
    \textit{Step (ii): ${\dim}_B X\leq \frac{\log \alpha}{\log n}$.}
    \vspace{0.2cm}

    Write  $\zeta= {\dim}_B X$. 
    For each $k$, write $l(k)=\lfloor k\log_n m\rfloor$ and $l'(k)=\lfloor l(k) \log_n m \rfloor$. Note that for $\epsilon>0,$ there exists $C'>0,$ such that for any $k\geq 1$, we have
     \begin{equation}\label{e15}
     N_{l(k)}(X)\leq C'm^{l(k)(\zeta+\epsilon)}.
    \end{equation}

    Fix $k$ and choose a collection $\{Q_{l(k)}(p,q)\}$ that cover $X$ with $\#\{Q_{l(k)}(p,q)\}=N_{l(k)}(X)$. For each $Q_{l(k)}(p,q)$. We now turn to estimate $N_{k}(X\cap Q^\circ_{l(k)}(p,q))$, where $Q_{l(k)}^\circ(p,q)$ denotes the interior of $Q_{l(k)}(p,q)$.

\begin{figure}[htp]
\includegraphics[width=6.5cm]{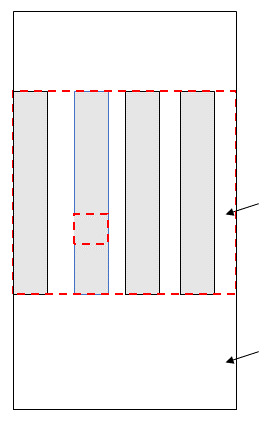}
  \begin{picture}(0,0)
\put(-5,52){$\scriptstyle{\psi_{w} ([0,1]^2)}$}
\put(-5,154){$\scriptstyle{Q_{l(k)}(p,q)}$}
\put(-148,174){$\scriptstyle{\psi_{\tilde{w}}([0,1]^2)}$}
\put(-131,137){$\scriptstyle{Q_{k}}$}
  \end{picture}
	\caption{The location of $Q_{l_s}(p,q)$ (resp. $Q_{k_s}$) in $\psi_w([0,1]^2)$ (resp. $\psi_{\tilde{w}}([0,1]^2)$).}
	\label{fig3.1}
\end{figure}

    For $v\in V,$ assume that  $X_v \cap Q^\circ_{l(k)}(p,q)\neq \emptyset$.  Then there exists $w\in E^{l'(k)}$ with $i(w)=v$ such that $Q^\circ_{l(k)}(p,q)\subseteq \psi_w((0,1)^2)$. Note that for $\tilde{w}\in E^{l(k)}$, $\psi_{\tilde{w}}([0,1]^2)$ is a rectangle with width $n^{-{l(k)}}$ and height $m^{-{l(k)}},$ and the height of $Q_{l(k)}(p,q)$ is $m^{-{l(k)}}$. By firstly dividing $Q_{l(k)}(p,q)$ into rectangles in terms of  $\psi_{\tilde{w}}([0,1]^2)$ with $\tilde{w}\in E^{l(k)}$, then covering each rectangle with  approximate squares of level $k$  (see Figure \ref{fig3.1} for an illustration), we have
    \begin{align}
    N_{k}(X_v\cap Q^\circ_{l(k)}(p,q))
    &\leq \sum_{v'\in t(v,[w])} \sum_{[w']\in [E^{l(k)-l'(k)}]:i(w')=v',\atop \psi_{w}\circ\psi_{w'}((0,1)^2) \subseteq Q^\circ_{l(k)}(p,q)} N_{k}\big (  \psi_w(X_{v'} \cap  \psi_{w'}((0,1)^2))\big )\nonumber
    \\
    &= \sum_{v'\in t(v,[w])} \sum_{[w']\in [E^{l(k)-l'(k)}]:i(w')=v',\atop \psi_{w}\circ\psi_{w'}((0,1)^2) \subseteq Q^\circ_{l(k)}(p,q)} N_{k}\big (  \phi_w \circ \pi (X_{v'} \cap  \psi_{w'}((0,1)^2))\big )\nonumber
    \\
    &=  \sum_{v'\in t(v,[w])} \sum_{[w']\in [E^{l(k)-l'(k)}]:i(w')=v',\atop \psi_{w}\circ\psi_{w'}((0,1)^2) \subseteq Q^\circ_{l(k)}(p,q)} N_{k-l'(k)}\big (   \pi (X_{v'} \cap  \psi_{w'}((0,1)^2))\big )\nonumber
    \\
    &\leq \sum_{v'\in t(v,[w])} \sum_{[w']\in [E^{l(k)-l'(k)}]:i(w')=v',\atop \psi_{w}\circ\psi_{w'}((0,1)^2) \subseteq Q^\circ_{l(k)}(p,q)} c_1 m^{(k-l(k))(\eta(v',x_{w'},y_{w'})+\epsilon)} \quad \text{by Lemma }\ref{le3}\nonumber
    \\
    &\leq c_1 n^{1+\epsilon} n^{(l(k)-l'(k))\epsilon}\cdot  \#V \max_{v'\in t(v,[w])} \sum_{[w']\in [E^{l(k)-l'(k)}]:i(w')=v',\atop \psi_{w}\circ\psi_{w'}((0,1)^2) \subseteq Q^\circ_{l(k)}(p,q)}n^{(l(k)-l'(k))\eta(v',x_{w'},y_{w'})}\nonumber
    \\
    &\leq c_1 n^{1+\epsilon} n^{(l(k)-l'(k))\epsilon}\cdot \#V \alpha_{l(k)-l'(k)}.\label{e19}
    \end{align}
    Combining this with  \eqref{e15}, we have
    $$
    \begin{aligned}
    N_{k}(X)&\leq C' m^{l(k)(\zeta+\epsilon)} \cdot (\#V)^2 c_1n^{1+\epsilon} n^{(l(k)-l'(k))\epsilon} \alpha_{l(k)-l'(k)},
    \end{aligned}
    $$
    which yields that
    $$
    {\dim}_B X \leq (\zeta+\epsilon)\log_n m+(\frac{\log \alpha}{\log n}+\epsilon) (1-\log_n m)
    $$ by using  $l(k)/k \to \log_n m$ as $k\to \infty$. Therefore, $\dim_B X \leq \frac{\log \alpha }{\log n }+ \frac{1}{1-\log_n m}\epsilon $. So $\dim_B X \leq \frac{\log \alpha }{\log n}$ by the arbitrary of $\epsilon.$
\end{proof}

Now we turn to estimate the upper bound of Assouad dimension of $X.$


\begin{lemma}\label{le4}
    For $\epsilon>0$, there exists a constant $C>0$ such that for $1\leq k\leq k'$ and any $p,q\in \mathbb{Z}$, we have
    \begin{equation}
    N_{k'}(X\cap Q^\circ_k(p,q))\leq C m^{(k'-k)(\frac{\log \alpha+\epsilon}{\log n}+\epsilon)}. \nonumber
    \end{equation}
\end{lemma}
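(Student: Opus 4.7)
The plan is to extend the argument in step (ii) of the proof of Lemma \ref{le1} to an arbitrary approximate square $Q_k^\circ(p,q)$ in place of the full carpet $X$. Put $l = \lfloor k\log_n m\rfloor$ and $l' = \lfloor k'\log_n m\rfloor$. If $X \cap Q_k^\circ(p,q) \neq \emptyset$, the lattice structure yields some $v \in V$ and $w \in E^l$ with $i(w) = v$ and $Q_k^\circ(p,q) \subseteq \psi_w((0,1)^2)$: indeed, a level-$l$ cylinder has width $n^{-l}$ exactly matching that of $Q_k^\circ$, and height $m^{-l} \geq m^{-k}$ sufficient to contain $Q_k^\circ$ in the $y$-direction. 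I fix $y' \in \Sigma_Y^{k-l}$ so that $\pi(Q_k^\circ) = \phi_w(I_{y'})$; for $k$ large enough that $l \geq \#V$, one may take $v \in \tilde V$ so that $\alpha_{l'-l}$ applies.

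Next, I refine $X \cap Q_k^\circ$ by sub-cylinders at level $l'$: for $[w''] \in [E^{l'-l}]$ with $i(w'') = v' \in t(v,[w])$, the cylinder $\psi_w\psi_{w''}([0,1]^2)$ has width $n^{-l'}$, which equals the width of a level-$k'$ approximate square, hence lies in a single column of the level-$k'$ grid. For those $[w'']$ whose sub-cylinder meets $Q_k^\circ$, Lemma \ref{le3} (using the $\theta$-estimate \eqref{e14} when $l' < k$, and the $\eta$-estimate \eqref{e13} when $l' \geq k$, applied after $\phi_w$-rescaling) bounds the number of level-$k'$ cells covering $\psi_w\psi_{w''}(X_{t(w'')}) \cap Q_k^\circ$ by an expression of the form $c_1 m^{\cdot(\theta + \epsilon)}$. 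Converting the per-sub-cylinder exponent from $m^{\cdot}$ to $n^{(l'-l)\cdot}$ via the bound $n^{l'-l} \leq n\,m^{k'-k}$ (which follows from $l'-l = (k'-k)\log_n m + O(1)$) allows the subsequent sum to be recognised as precisely the quantity appearing in the Remark characterisation
\[
\alpha_{l'-l} \;=\; \max_{v,y,y'}\, \sum_{[w]\in[E^{l'-l}]:\, i(w)=v,\, y_w=y} n^{(l'-l)\theta(v,x_w,y_w,y')}.
\]

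Summing over $v' \in t(v,[w])$ and admissible $[w'']$ therefore gives
\[
N_{k'}(X \cap Q_k^\circ(p,q)) \;\leq\; C_1\, m^{(k'-k)\epsilon}\, \alpha_{l'-l}.
\]
By Lemma \ref{lim}, for any $\epsilon>0$ there is $C_2>0$ with $\alpha_{l'-l} \leq C_2(\alpha+\epsilon)^{l'-l}$, and since $l'-l \leq (k'-k)\log_n m + 1$,
\[
(\alpha+\epsilon)^{l'-l} \;\leq\; (\alpha+\epsilon)\; m^{(k'-k)\log(\alpha+\epsilon)/\log n},
\]
which (after adjusting the role of $\epsilon$, using $\log(\alpha+\epsilon) = \log\alpha + O(\epsilon)$) absorbs into the constant and delivers the claimed bound.

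The main subtlety is uniformising the per-sub-cylinder estimate across the two geometric regimes $l' < k$ and $l' \geq k$. In the first, each sub-cylinder is taller than $Q_k^\circ$ in the $y$-direction, so only the intersection with $\phi_w(I_{y'})$ contributes and the residual $y$-subinterval must be fed into the last argument of $\theta$. In the second, each sub-cylinder is shorter than $Q_k^\circ$ and multiple sub-cylinders stack vertically; the stacking multiplicity must be counted jointly with the per-sub-cylinder cell bound so that the combined total can still be recognised inside a single application of the $\theta$-formulation of $\alpha_{l'-l}$. The fact that $\alpha$ admits the joint $(y, y')$ description in the Remark is exactly what permits a uniform treatment of these two regimes.
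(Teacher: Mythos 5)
Your treatment of the regime $l'<k$ is fine and matches the paper's Case (i): there the level-$l'$ sub-cylinders that meet $Q_k^\circ(p,q)$ all share one vertical address of full length $l'-l$ (because $Q_k^\circ$ is thinner than a level-$l'$ cylinder), the residual vertical interval is fed into the last argument of $\theta$ via \eqref{e14}, and the resulting sum is over classes $[w']$ with a single fixed $y_{w'}$, which is exactly the shape of $\alpha_{l'-l}$ in the Remark. The gap is in the regime $k\le l'$. There the sub-cylinders of level $l'$ inside $\psi_w$ that meet $Q_k^\circ$ have their vertical word pinned only on the first $k-l$ symbols (those determined by $Q_k$); the remaining $l'-k$ vertical symbols range freely over all admissible continuations, so your sum runs over up to $m^{l'-k}$ \emph{distinct} row-words. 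The quantity $\alpha_{l'-l}$, in either its $\eta$- or $\theta$-formulation, is a \emph{maximum over one fixed} $y\in\Sigma_Y^{l'-l}$ (the extra argument $y'$ of $\theta$ encodes a residual sub-interval, not a device for summing over rows), so the sum cannot be "recognised as precisely" $\alpha_{l'-l}$. Concretely, take the one-vertex carpet with all $mn$ maps, so $X=[0,1]^2$, $\eta\equiv 1$ and $\alpha_{l'-l}=n^{2(l'-l)}$: the sum you form over classes with fixed vertical prefix is of order $n^{k-l}(mn)^{l'-k}n^{l'-l}$, exceeding $\alpha_{l'-l}$ by the unbounded factor $m^{l'-k}$. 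Your claimed intermediate bound $N_{k'}(X\cap Q_k^\circ)\le C_1 m^{(k'-k)\epsilon}\alpha_{l'-l}$ is in substance the lemma itself, and the identification you offer as its justification is false in this regime.

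What is missing is precisely the extra counting input the paper supplies. In its Case (ii) the paper splits at level $k$: the levels from $l$ to $k$ (fixed vertical word dictated by $Q_k$) give an $\alpha_{k-l}$-type factor, while the levels from $k$ to $l'$ (free vertical word) are counted by $\tau_{l'-k}=\max_v\#\{[w'']:w''\in E^{l'-k},\,i(w'')=v\}$, the total number of classes \emph{irrespective of row}; this factor is then controlled through Lemma \ref{le1}, whose content ($\tfrac{\log\tau}{\log n}\le\dim_BX\le\tfrac{\log\alpha}{\log n}$, proved by a separate box-dimension comparison) is exactly what relates the all-rows count to $\alpha$. Your proposal never invokes Lemma \ref{le1} or any substitute for it, so the vertical stacking in the case $k\le l'$ is left uncontrolled; to repair the argument you would need to reinstate this split and the inequality $\tau\le\alpha$ (or an equivalent estimate comparing the total class count with the fixed-row weighted count).
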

\begin{proof}
    Let $l=\lfloor k\log_n m\rfloor $ and $l'=\lfloor k'\log_n m \rfloor$. It suffices to show that for each $v\in V$,  we have
    \begin{equation}\label{e7}
    N_{k'}(X_v\cap Q^\circ_k(p,q))\leq C n^{(l'-l)(\frac{\log \alpha+\epsilon}{\log n}+\epsilon)},
    \end{equation}
    for all $k'\geq k$ with  $k\geq \#V \frac{2}{\log _n m }$. Assume that $X_v\cap Q_k^{\circ}(p,q)\neq \emptyset$. There exists $w\in E^l$ such that $i(w)=v, Q^\circ_k(p,q)\subseteq \psi_{w}((0,1)^2)$. Recall that $\alpha=\lim_{s\to \infty} (\alpha_s)^{1/s},$ there exists $S$ such that for $s\geq S$,
    \begin{equation}
    \frac{\log \alpha_s}{s}\leq \log \alpha +\epsilon.\nonumber
    \end{equation}
    Therefore, there exists a constant $C_1>0$ such that for $s\geq 1$,
    \begin{equation}
        \label{e6}
        \alpha_s \leq C_1 n^{s(\frac{\log \alpha+\epsilon}{\log n})}.
    \end{equation}

    Consider the following two cases.

\begin{figure}[htp]
	\includegraphics[width=4.5cm]{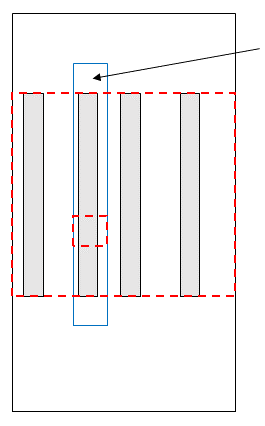}\hspace{2cm}
 \begin{picture}(0,0)
 \put(-135,-5){(i)}

\put(-70,20){$\scriptstyle{\psi_w(X_{v'})}$}
\put(-60,180){$\scriptstyle{\psi_{\tilde{w}}([0,1]^2)}$}
  \end{picture}
    \includegraphics[width=4.5cm]{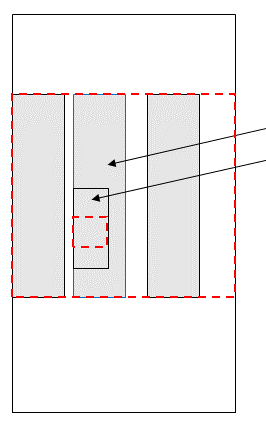}
    \begin{picture}(0,0)
    \put(-79,-7){(ii)}
    \put(0,143){$\scriptstyle{\psi_{\tilde{w}}([0,1]^2)}$}
\put(0,125){$\scriptstyle{\psi_{\tilde{w}}\circ\psi_{\hat{w}}([0,1]^2)}$}
\put(-10,20){$\scriptstyle{\psi_w(X_{v'})}$}
  \end{picture}
	\caption{Covering $Q^\circ_k(p,q)$ with approximate squares of level $k'$ in Cases (i), (ii).}
	\label{fig3.2}
\end{figure}

    \vspace{0.2cm}
    \textit{Case (i): $l'< k$.}
    \vspace{0.2cm}

    In this case, by firstly dividing $Q_{k}(p,q)$ into rectangles in terms of  $\psi_{\tilde{w}}([0,1]^2)\cap Q_k(p,q)$ with $\tilde{w}\in E^{l'}$, then covering each rectangle with  approximate squares of level $k'$ (see Figure \ref{fig3.2}-(i)), we have

    \begin{align}
        &N_{k'}(X_v\cap Q^\circ_k(p,q))\nonumber
        \\
         \leq& \sum_{v'\in t(v,[w])} \sum_{[w']\in [E^{l'-l}]: i(w')\in  v',\atop \psi_{w}\circ\psi_{w'} ((0,1)^2))\cap Q_k^\circ(p,q)\neq \emptyset} N_{k'} \big( \psi_w(X_{v'} \cap  \psi_{w'}((0,1)^2))\cap Q^\circ_{k}(p,q) \big) \nonumber
        \\
        =&\sum_{v'\in t(v,[w])} \sum_{[w']\in[E^{l'-l}]: i(w')=v', \atop \psi_{w}\circ\psi_{w'} ((0,1)^2))\cap Q_k^\circ(p,q)\neq \emptyset} N_{k'} \big( \phi_w\circ \pi (X_{v'} \cap  \psi_{w'}((0,1)^2)\cap  \psi_w^{-1}( Q^\circ_{k}(p,q))) \big) \nonumber
        \\
        =&\sum_{v'\in t(v,[w])} \sum_{[w']\in [E^{l'-l}]: i(w')=v',\atop \psi_{w}\circ\psi_{w'}((0,1)^2))\cap Q_k^\circ(p,q)\neq \emptyset} N_{k'-l} \big(  \pi (X_{v'} \cap  \psi_{w'}((0,1)^2)\cap  \psi_w^{-1}( Q^\circ_{k}(p,q))) \big) \label{e20}
        \\
        \leq& \sum_{v'\in t(v,[w])}  \sum_{[w']\in [E^{l'-l}]: i(w')=v',\atop \psi_{w}\circ\psi_{w'} ((0,1)^2))\cap Q_k^\circ(p,q)\neq \emptyset}  c_1 m^{(k'-k)(\theta(v',x_{w'},y_{w'},y)+\epsilon)} \quad \text{by Lemma }\ref{le3} \nonumber
        \\
        \leq& \sum_{v'\in t(v,[w])}    \sum_{[w']\in [E^{l'-l}]: i(w')=v',\atop \psi_{w}\circ\psi_{w'} ((0,1)^2))\cap Q_k^\circ(p,q)\neq \emptyset}  c_1 m^{(k'-k)(\eta(v',x_{w'},y_{w'})+\epsilon)} \nonumber
        \\
        \leq& c_1\#V\cdot  \alpha_{l'-l} n^{(l'-l+1)\epsilon+1} \quad \text{ since } l\geq k\log_n m-1 \geq \#V\nonumber
        \\
        \leq& c_1\#V\cdot n^{\epsilon+1} C_1 n^{(l'-l)(\frac{\log \alpha+\epsilon }{\log n}+\epsilon)} \quad \text{ by }\eqref{e6}\nonumber
    \end{align}
    where in the forth line $y$ is in $ \Sigma_Y^{k-l'}$ so that
    \begin{equation}\label{e16}
    \phi_{w'}^{-1}\circ \pi \circ \psi_w^{-1}(Q^\circ_k(p,q))=I_y, \quad \text{ recall }\eqref{e18},
    \end{equation}
    and the second to last line follows in a same way as \eqref{e19}.


    \vspace{0.2cm}
    \textit{Case (ii): $k\leq l'$.}
    \vspace{0.2cm}

In this case, by firstly dividing $Q_{k}(p,q)$ into rectangles in terms of  $\psi_{\tilde{w}}([0,1]^2)$ with $\tilde{w}\in E^{k}$, secondly dividing  each rectangle into smaller ones in terms of  $\psi_{\tilde{w}}\circ\psi_{\hat{w}}([0,1]^2)$ with $\hat{w}\in E^{l'-k}$, then covering each smaller rectangle with approximate squares of level $k'$ (see Figure \ref{fig3.2}-(ii)), we have

    \begin{align*}
        &N_{k'}(X_v\cap Q^\circ_k(p,q))\\
        \leq&  \sum_{v'\in t(v,[w])} \sum_{[w']\in [E^{k-l}]:i(w')=v', \atop \psi_{w}\circ\psi_{w'}((0,1)^2)\subseteq Q^\circ_k(p,q)} N_{k'}\big ( \psi_w (X_{v'} \cap \psi_{w'}((0,1)^2)) \big)
        \\
        \leq&\sum_{v'\in t(v,[w])} \sum_{[w']\in [E^{k-l}]:i(w')=v', \atop \psi_{w}\circ\psi_{w'}((0,1)^2)\subseteq Q^\circ_k(p,q)} \sum_{v''\in t(v',[w'])} \sum_{[w'']\in [E^{l'-k}]:\atop i(w'')=v''}  N_{k'} \big ( \psi_w\circ \psi_{w'} (X_{v''} \cap  \psi_{w''}((0,1)^2)) \big)
        \\
        =&\sum_{v'\in t(v,[w])} \sum_{[w']\in [E^{k-l}]:i(w')=v', \atop \psi_{w}\circ\psi_{w'} ([0,1]^2)\subseteq Q^\circ_k(p,q)} \sum_{v''\in t(v',[w'])} \sum_{[w'']\in [E^{l'-k}]:\atop i(w'') =v''}   N_{k'} \big ( \phi_w\circ \phi_{w'}\circ \pi  (X_{v''} \cap  \psi_{w''}([0,1]^2)) \big)
        \\
        =&\sum_{v'\in t(v,[w])} \sum_{[w']\in [E^{k-l}]:i(w')=v', \atop \psi_{w}\circ\psi_{w'} ([0,1]^2)\subseteq Q^\circ_k(p,q)} \sum_{v''\in t(v',[w'])} \sum_{[w'']\in [E^{l'-k}]:\atop i(w'') =v''}   N_{k'-k} \big (  \pi  (X_{v''} \cap  \psi_{w''}([0,1]^2)) \big)
        \\
        \leq&\sum_{v'\in t(v,[w])} \sum_{[w']\in [E^{k-l}]:i(w')=v', \atop \psi_{w}\circ\psi_{w'}([0,1]^2)\subseteq Q^\circ_k(p,q)} \sum_{v''\in t(v',[w'])} \sum_{[w'']\in [E^{l'-k}]:\atop i(w'') =v''} c_1 m^{(k'-l')(\eta(v'',x_{w''},y_{w''})+\epsilon)} \quad \text{ by Lemma }\ref{le3}
        \\
        \leq & \sum_{v'\in t(v,[w])} \sum_{[w']\in [E^{k-l}]:i(w')=v', \atop \psi_{w}\circ\psi_{w'}([0,1]^2)\subseteq Q^\circ_k(p,q)} c_1 m^{(k'-l')(\eta(v',x_{w'},y_{w'}+\epsilon)} \cdot \#V \tau_{l'-k}
        \\
        \leq& c_1(\#V)^2\cdot \alpha_{k-l} n^{(k-l+1)\epsilon+1} \cdot \tau_{l'-k} \quad \text{ since } l=\lfloor k\log_n m\rfloor \geq \#V.
    \end{align*}
    On the other hand, by Lemma \ref{le1}, $\log \tau \leq\log  \alpha$, so there exists $C_2>0$ such that for all $s\geq 1$,
    \begin{equation}\label{e8}
       \tau_s  \leq C_2n^{s(
       \frac{\log \alpha+\epsilon}{\log n })}.
    \end{equation}
    Therefore, combining the above estimate with  \eqref{e6} and \eqref{e8}, we have
    $$
    \begin{aligned}
    N_{k'}(X_v\cap Q^\circ_k(p,q))&\leq c_1 n^{\epsilon+1}(\#V)^2 \cdot C_1n^{(k-l)(\frac{\log \alpha +\epsilon}{\log n}+\epsilon)}\cdot C_2 n^{(l'-k)(\frac{\log \alpha+\epsilon}{\log n})}
    \\
    &\leq c_1 n^{\epsilon+1}(\#V)^2\cdot C_1 C_2 \cdot n^{(l'-l)(\frac{\log \alpha+\epsilon}{\log n }+\epsilon)},
    \end{aligned}
    $$
    which gives \eqref{e7}.
\end{proof}

Next we estimate the  Assouad dimension of $X$ from below.
\begin{lemma}\label{le2}
    For any small  $\epsilon>0$, for any $C>0$, there exist $k'\geq k\geq 1$, $p,q$ in $ \mathbb{Z}$, such that
    $$N_{k'}\left(X\cap Q^\circ_k(p,q)\right)\geq Cm^{(k'-k)(\frac{\log \alpha-\epsilon}{\log n}-2\epsilon)}.
    $$
\end{lemma}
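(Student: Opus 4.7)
The plan is to exhibit, for the given $\epsilon>0$ and $C>0$, an explicit coarse level $k$, fine level $k'$, and integers $p,q$ for which $Q^\circ_k(p,q)$ contains many rectangles of scale $n^{-(t+s)}\times m^{-(t+s)}$, each of which produces many distinct level-$k'$ approximate squares. The ``many rectangles'' come from $\alpha_s$ being close to $\alpha^s$, while the ``many approximate squares per rectangle'' come from the projection dimension $\eta$ via Lemma \ref{le3}. Concretely, take $s$ large enough that $\alpha_s\ge n^{s(\log\alpha-\epsilon)/\log n}$, and select $v^*\in\tilde V$ and $y^*\in\Sigma_Y^s$ realising the maximum, so that, writing $\mathcal{W}^*:=\{[w]\in[E^s]:i(w)=v^*,\;y_w=y^*\}$,
$$\sum_{[w]\in\mathcal{W}^*}n^{s\,\eta(v^*,x_w,y_w)}=\alpha_s\ge n^{s(\log\alpha-\epsilon)/\log n}.$$

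Because $v^*\in\tilde V$, some admissible word of length $\ge\#V$ ends at $v^*$; by pigeonhole this path revisits a vertex, yielding a cycle whose repeated insertion produces admissible $u$'s with $t(u)=v^*$ of all sufficiently large lengths in an arithmetic progression. I pick such $u$ with $t:=|u|$ close to $s/(\log_m n-1)$ and satisfying $t(\log_m n-1)\le s$, and then set
$$k:=\lceil t\log_m n\rceil,\qquad k':=\lceil t+s(1+\log_m n)\rceil.$$
Two scale conditions follow: (i) $k\le t+s$, so the common horizontal $y$-strip of thickness $m^{-t-s}$ containing every $\psi_{uw}([0,1]^2)$ for $[w]\in\mathcal{W}^*$ (the $y$-coordinate is the same because $y_w=y^*$) fits into a single approximate square of level $k$; (ii) $\lfloor k'\log_n m\rfloor\ge t+s$, so each level-$k'$ approximate square has width $\le n^{-t-s}$ and meets at most one of the horizontally disjoint rectangles $\psi_{uw}([0,1]^2)$. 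Choose $(p,q)$ so that $Q_k(p,q)$ has $x$-range equal to that of $\psi_u([0,1]^2)$ and $y$-range covering (at least half of) the strip, losing at most a factor of $2$ if the strip straddles the level-$k$ grid.

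Writing $Y_{[w]}:=\bigcup_{v'\in t(v^*,[w])}X_{v'}$ (so $\dim_B\pi(Y_{[w]})=\eta(v^*,x_w,y_w)$ by \eqref{e17}), we have $X\cap Q^\circ_k(p,q)\supseteq\bigcup_{[w]\in\mathcal{W}^*}\psi_{uw}(Y_{[w]})$, and horizontal disjointness at the level-$k'$ grid gives
$$N_{k'}\bigl(X\cap Q^\circ_k(p,q)\bigr)\ge\sum_{[w]\in\mathcal{W}^*}N_{k'}\bigl(\psi_{uw}(Y_{[w]})\bigr).$$
For each $[w]$, distinct $y$-intervals of length $m^{-k'}$ meeting $\phi_u\phi_w(\pi(Y_{[w]}))$ index distinct approximate squares, so Lemma \ref{le3} (applied at scale $k'-t-s$) yields $N_{k'}(\psi_{uw}(Y_{[w]}))\ge c_1^{-1}m^{(k'-t-s)(\eta(v^*,x_w,y_w)-\epsilon)}$. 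Since $(k'-t-s)\log_n m\ge s$ gives $m^{(k'-t-s)\eta_w}\ge n^{s\eta_w}$, summing produces
$$N_{k'}\bigl(X\cap Q^\circ_k(p,q)\bigr)\ge c_1^{-1}m^{-(k'-t-s)\epsilon}\alpha_s\ge c_1^{-1}m^{-(k'-t-s)\epsilon}n^{s(\log\alpha-\epsilon)/\log n}.$$
The identities $(k'-k)\log_n m=s+O(1)$ and $(k'-t-s)\log_n m=s+O(1)$ then translate this into the target $\ge Cm^{(k'-k)[(\log\alpha-\epsilon)/\log n-2\epsilon]}$ once $s$ is taken large enough that the surplus factor $n^{s\epsilon}$ dominates $c_1C$. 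The main obstacle is the two-sided scale matching, which forces the length of $u$ to scale linearly with $s$ as $t\approx s/(\log_m n-1)$; existence of admissible words of this length ending at $v^*$ is the role of the pigeonhole/cycle argument, and a factor-of-$2$ loss from $y$-strip misalignment with the level-$k$ grid is absorbed into the extra $-\epsilon$ appearing in the target exponent.
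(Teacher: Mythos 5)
Your proposal is correct and follows essentially the same route as the paper's proof: fix $s$ with $\alpha_s\ge n^{s(\log\alpha-\epsilon)/\log n}$ and its maximizing $v^*\in\tilde V$ and $y^*\in\Sigma_Y^s$, use the pigeonhole/cycle property of $\tilde V$ to prepend a long admissible word ending at $v^*$, choose the coarse approximate square so that it captures exactly the $s$ extra vertical digits, resolve the horizontally disjoint columns at the fine level, and apply Lemma \ref{le3} to each column, with the surplus factor $n^{s\epsilon}$ beating any prescribed $C$. The only difference is cosmetic bookkeeping of scales (you fix the prefix length $t$ from the cycle argument and set $k,k'$ by ceilings with $O(1)$ slack, while the paper chooses $k$ with $k=\lfloor k\log_n m\rfloor+s$ and $k'$ with $\lfloor k'\log_n m\rfloor=k$ exactly), and your worry about a factor $2$ from grid misalignment is unnecessary since the $m$-adic strips are nested in the level-$k$ grid.
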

\begin{proof}
    By the definition of $\alpha$, there exists $C_1>0$ such that for $s \geq 1$, we have
    \begin{equation}\label{e10}
         \alpha_s \geq C_1 n^{s(\frac{ \log \alpha -\epsilon}{\log n })}.
    \end{equation}
    Fix $s$, let $v\in \tilde{V}$,  $y=y_1\cdots y_s\in \Sigma_Y^s$  such that
    $$
    \alpha_s=\sum_{[w]\in [E^s]:\atop i(w)=v, y_w =y } n^{s\eta(v,x_w,y_w)}.
    $$

\begin{figure}[htp]
\includegraphics[width=6.5cm]{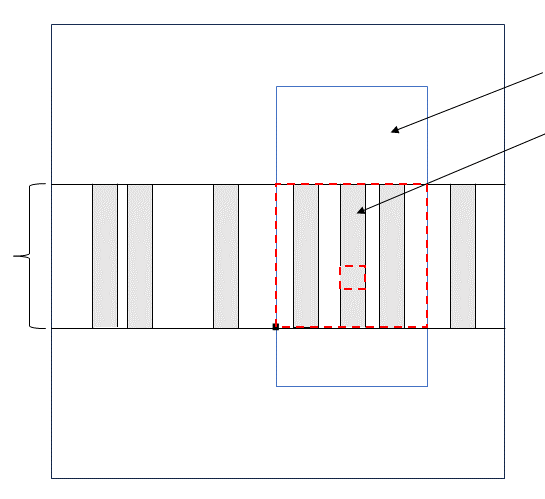}
  \begin{picture}(0,0)
\put(0,124){$\scriptstyle{\psi_{\tilde{w}}\circ\psi_{w'} ([0,1]^2)}$}
\put(0,146){$\scriptstyle{\psi_{\tilde{w}}(X_v)}$}
\put(-215,80){$\scriptstyle{\phi_{\tilde{w}}(I_y)}$}
\put(-115,50){$\scriptstyle{(p,q)}$}
\put(-10,15){$\scriptstyle{X_{\tilde{v}}}$}
  \end{picture}
	\caption{The choice of $Q_k(p,q)$ according to $s,v, \tilde{v}$ and $y$.}
	\label{fig3.3}
\end{figure}

    Choose a $k\geq 1$ such that $k=\lfloor k\log_n m\rfloor+s$ and let $l=\lfloor k\log_n m \rfloor$. Since $v\in \tilde{V}$, there exists ${w}\in E^*$ with $|w|\geq \#V$ such that $t({w})=v$. Noticing that $\{i(w_1),\cdots, i(w_{|w|}), t(w)\} \subseteq V$, there must exist $\tilde{w}\in E^l$ such that $t(\tilde{w})=v$. Write $\tilde{v}=i(\tilde{w}).$ Let $p=\sum_{i=1}^l x_{\tilde{w}_i} n^{l-i}$ and $q=\sum_{i=1}^{l} y_{\tilde{w}_i} m^{k-i} +\sum_{i=1}^s y_i m^{s-i}$, therefore $X_{\tilde{v}}\cap Q^\circ_{k}(p,q)\neq \emptyset.$
    We also choose a $k'\geq 1$ such that $k=\lfloor k' \log_n m \rfloor$. Since $X_{\tilde{v}}\subseteq X$, it suffices to estimate $N_{k'}(X_{\tilde{v}}\cap Q^\circ_{k}(p,q)).$ Similar to the argument in Lemma \ref{le1} (see Figure \ref{fig3.3} for an illustration), we have
    \begin{align*}
        N_{k'}(X_{\tilde{v}}\cap Q^\circ_{k}(p,q))&\geq N_{k'}(\psi_{\tilde{w}}(X_{v})\cap Q^\circ_k(p,q))
        \\
        &=\sum_{[w']\in [E^{s}] :\atop i(w')=v, y_{w'}=y} N_{k'}\big (\psi_{\tilde{w}}(X_{v}) \cap \psi_{\tilde{w}}\circ\psi_{w'}((0,1)^2)\big )
        \\
        &=\sum_{[w']\in [E^{s}]:\atop i(w')=v, y_{w'}=y} N_{k'-l}\big (\pi(X_{v}\cap \psi_{w'}((0,1)^2))\big )
        \\
        &\geq \sum_{[w']\in [E^{s}]:\atop i(w')=v,y_{w'}=y} c_1^{-1} m^{(k'-k)(\eta(v,x_{w'},y_{w'})-\epsilon)} \quad \text{by Lemma }\ref{le3}
        \\
        &\geq c_1^{-1} n^{\epsilon-1} n^{-s\epsilon} \alpha_s
        \geq c_1^{-1} n^{\epsilon-1}C_1 n^{s(\frac{\log \alpha-\epsilon}{\log n}-\epsilon)} \quad \text{ by }\eqref{e10}.
    \end{align*}

   For any  $C>0$, there exists large enough $s\geq 1$ such that $c_1^{-1} n^{\epsilon-1}C_1 n^{-(\frac{\log \alpha-\epsilon}{\log n }-2\epsilon)} n^{s\epsilon}> C$. Therefore,
    $$
    N_{k'}(X\cap Q^\circ_{k}(p,q)) \geq c_1^{-1} n^{\epsilon-1}C_1 n^{s\epsilon} m^{(k'-k-\log_m n)(\frac{\log \alpha-\epsilon}{\log n}-2\epsilon)} > C m^{(k'-k)(\frac{\log \alpha-\epsilon}{\log n}-2\epsilon)},
    $$
    where $k,k',p,q$ are chosen as before according to $s$.
\end{proof}

\begin{proof}[Proof of Theorem \ref{th1} for Assouad dimension]
    Combining Lemmas \ref{le4} and \ref{le2}, letting $\epsilon$ go to $0$, we finally obtain $\dim_A X=\frac{\log \alpha}{\log n}$.
\end{proof}

\begin{proof}[Proof of Corollary \ref{c1}]
    By assumption, $\dim_B \pi(X)=\dim_B \pi(X_v)$ for all  $v$ in $V,$ and so
    $$
    \eta(v,x_w,y_w)=\dim_B \pi(X),
    $$
    for all $w\in E^*$ with $i(w)=v$.
    Thus
    $$\alpha_k=\max_{v\in V} \max_{y\in \Sigma_Y^k} n^{k\dim_B \pi(X)} \sum_{v'\in V} A_{y_1}\cdots A_{y_k}(v,v').$$
    Therefore,
    $$
    \log \alpha= \dim_B \pi(X) \log n + \lim_{k\to \infty}  \frac{1}{k}\log \max_{y\in \Sigma_Y^k} \Vert A_{y_1} \cdots A_{y_k} \Vert.
    $$
    By Theorem \ref{th1}, the expression of Assouad dimension follows.
\end{proof}


\section{Lower dimension}\label{sec4}
In this section, we look at the lower dimension $\dim_L X$ of $X$, which equals
 $$
 \begin{aligned}
    \dim_{L}X=\sup\Big \{s\geq0:\exists C>0, \text{ for any } k'>k &\text{ and } Q_k(p,q) \text{ with } X\cap Q_k(p,q)\neq \emptyset,\\
    &N_{k'}(X\cap Q_k(p,q))\geq Cm^{(k'-k)s} \Big \}.
    \end{aligned}
    $$

The following lemma aims to the lower bound of lower dimension.

\begin{lemma}\label{low1}
    For any small $\epsilon>0$, there exists a constant $C>0$ such that for $ k'\geq k\geq 1$ and any $Q_k(p,q)$ with $X\cap Q_k^\circ(p,q)\neq \emptyset$, we have
    \begin{equation}
    N_{k'}(X\cap Q^\circ_k(p,q))\geq C m^{(k'-k)(\frac{\log \beta-\epsilon}{\log n}-\epsilon)}. \nonumber
    \end{equation}
\end{lemma}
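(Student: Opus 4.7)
Two preliminary facts drive the plan. First, from $\beta=\liminf_{s\to\infty}(\beta_s)^{1/s}$, for any $\epsilon>0$ there is $C_1>0$ with $\beta_s\geq C_1\,n^{s(\log\beta-\epsilon)/\log n}$ for all $s\geq 1$ (the finitely many small $s$ are absorbed into $C_1$). Second, the lower half of Lemma \ref{le3} supplies, for each admissible $(v,w,y)$ with $i(w)=v$,
$$N_{k+|w|+|y|}\bigl(\pi(X_v\cap\psi_w((0,1)^2))\cap\phi_w(I_y)\bigr)\geq c_1^{-1}m^{k(\theta(v,x_w,y_w,y)-\epsilon)},$$
which will be applied term-by-term.

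The overall strategy parallels Lemma \ref{le4}, with inequalities reversed and $\beta_s$ playing the role of $\alpha_s$. Given $Q_k(p,q)$ with $X\cap Q_k^\circ(p,q)\neq\emptyset$, set $l=\lfloor k\log_n m\rfloor$ and $l'=\lfloor k'\log_n m\rfloor$; by enlarging $C$ we may assume $l\geq\#V$. Pick $w\in E^l$ with $Q_k^\circ(p,q)\subseteq\psi_w((0,1)^2)$ and let $\tilde y\in\Sigma_Y^{k-l}$ be the unique address with $\psi_w^{-1}(Q_k^\circ(p,q))=(0,1)\times I_{\tilde y}$. Since $|w|\geq\#V$, the outer maximum in the definition of $\beta_s$ is realized by some $v^{\ast}\in t([w])$; choose a representative $w^{\sim}\in[w]$ with $t(w^{\sim})=v^{\ast}$, so that $\psi_w(X_{v^{\ast}})=\psi_{w^{\sim}}(X_{v^{\ast}})\subseteq X_{i(w^{\sim})}\subseteq X$, reducing the task to lower-bounding $N_{k'}(\psi_w(X_{v^{\ast}})\cap Q_k^\circ(p,q))$.

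In Case (i), $l\leq l'<k$, take $s=l'-l$, $y=\tilde y|_{s}$, and $y''=\tilde y|_{s+1}^{k-l}$ (length $k-l'$). As at \eqref{e20}, distinct $[w']\in[E^s]$ with $y_{w'}=y$ have different $x_{w'}$ and occupy horizontally disjoint columns of level-$k'$ approximate squares; summing Lemma \ref{le3} over the $[w']$ with $\theta>0$ and invoking the $\beta_s$-lower bound on the pair $(y,y'')$ should yield
$$N_{k'}(X\cap Q_k^\circ(p,q))\geq c_1^{-1}m^{-(k'-k)\epsilon}\cdot C_1\, n^{s(\log\beta-\epsilon)/\log n},$$
which after converting $n^{s}$ to $m^{k'-k}$ via $s=(k'-k)\log_n m+O(1)$ gives the claim. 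Case (ii) $l'\geq k$ is handled by splitting the descent at level $k$: apply the $\beta_{k-l}$-bound with $y=\tilde y$ and $y'=\emptyset$ (so $\theta=\eta$) to pass from level $l$ to level $k$, then invoke a Case (i)-type estimate inside each resulting level-$k$ square to pass from $k$ down to $l'$.

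The main obstacle is verifying that the pair $(y,y'')$ dictated by $Q_k^\circ(p,q)$ satisfies the positivity condition $\sum\theta>0$ that activates the $\beta$-bound for our chosen $v^{\ast}$. The hypothesis $X\cap Q_k^\circ(p,q)\neq\emptyset$ guarantees only that \emph{some} starting vertex's attractor hits the slice $I_{\tilde y}$, and this vertex may not be the max-realizing $v^{\ast}$. Reconciling the two choices, presumably by exploiting $|w|\geq\#V$ to route between vertices along a short connecting word or by choosing $v^{\ast}$ adaptively to the slice, is the delicate point; a secondary bookkeeping issue is threading $v^{\ast}$ consistently through the two stages of Case (ii) so that the two $\beta$-applications compose without loss.
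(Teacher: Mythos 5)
Your Case (i) is essentially the paper's argument (term-by-term use of the lower half of Lemma \ref{le3} over the classes $[w']$ determined by $Q_k^\circ(p,q)$, then the uniform bound $\beta_s\geq C_1 n^{s(\log\beta-\epsilon)/\log n}$ coming from \eqref{el}), but two things keep the proposal from being a proof. First, your Case (ii) does not work as described: refining $w\in E^l$ by words $w'$ of length $k-l$ produces rectangles $\psi_w\circ\psi_{w'}([0,1]^2)$ of size $n^{-k}\times m^{-k}$, i.e.\ thin vertical strips, not approximate squares, so there is no ``Case (i)-type estimate inside each resulting level-$k$ square'' to invoke; moreover the first-stage quantity $\beta_{k-l}$ with $y'=\emptyset$ is calibrated to counting approximate squares of a level $\tilde k$ with $\lfloor\tilde k\log_n m\rfloor=k$, not the level-$k'$ squares you must count, so the two stages do not simply multiply, and composing two $\beta$-applications forces exactly the vertex-threading you admit you cannot carry out. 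The paper's Case (ii) is different and uses $\beta_{l'-l}$ only once: it bounds $N_{k'}(X\cap Q^\circ_k(p,q))$ below by (number of nonempty horizontal strips of height $m^{-l'}$ inside $Q_k$) times the minimal nonzero strip count $\kappa(z)$, estimates the first factor by $c_1^{-1}m^{(l'-k)(\theta(v,\emptyset,\emptyset,\tilde y)-\epsilon)}$ and the second by $\sum_{[w']:y_{w'}=\tilde y z}c_1^{-1}m^{(k'-l')(\eta(v,x_{w'},y_{w'})-\epsilon)}$ via Lemma \ref{le3}, and merges the exponents using $\theta(v,\emptyset,\emptyset,\tilde y)\geq\eta(v,x_{w'},y_{w'})$ for words whose vertical address begins with $\tilde y$, ending with $\sum_{[w']}m^{(k'-k)(\theta(v,x_{w'},y_{w'},\emptyset)-\epsilon)}\geq c\,\beta_{l'-l}$ for the pair $(\tilde y z,\emptyset)$.

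Second, the ``main obstacle'' you flag is left open, and your proposed remedies (routing between vertices by a short connecting word, or fixing $v^{\ast}$ adaptively) are not how the argument closes. The paper never selects a single $v^{\ast}$ in advance: since every $v\in t([w])$ has a representative $w''\in[w]$ with $t(w'')=v$, one has $\psi_w(X_v)\subseteq X$ for all such $v$ simultaneously, so $N_{k'}(X\cap Q^\circ_k(p,q))\geq\max_{v\in t([w])}N_{k'}(\psi_w(X_v)\cap Q^\circ_k(p,q))$, and this maximum over $t([w])$ is then matched against the $\max_{v\in t([w])}$ that is built into the very definition of $\beta_{l'-l}$, with the indicator $1_{\{\theta>0\}}$ in the chain matched against the positivity restriction there; no connecting word enters, and $|w|\geq\#V$ is used only to ensure $w$ is an admissible competitor in the outer minimum defining $\beta_{l'-l}$. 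As written, your proposal stops exactly where this accounting between the dictated slice, the maximum over $t([w])$, and the positivity condition in the definition of $\beta_k$ must be carried out, so it does not yet establish the lemma.
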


\begin{proof}

  Let $l=\lfloor k\log_n m\rfloor$ and $l'=\lfloor k' \log_n m\rfloor$. It suffices to show that
    \begin{equation}
    N_{k'}(X\cap Q^\circ_k(p,q))\geq C n^{(l'-l)(\frac{\log \beta-\epsilon}{\log n}-\epsilon)},\nonumber
    \end{equation}
    for all $k'\geq k$ with $k\geq \#V \frac{2}{\log_n m}$.
    Since $X\cap Q^\circ_k(p,q)\neq \emptyset$, there exists $w\in E^l$ such that  $Q^\circ_k(p,q) \subseteq \psi_w((0,1)^2)$. Note that  $|w|\geq \#V$. According to \eqref{el}, there exists $C_1>0$ such that for $s\geq 1$,
    \begin{equation}\label{el5}
    \beta_s \geq C_1 n^{s(\frac{\log \beta-\epsilon}{\log n})}.
    \end{equation}

    \begin{figure}[htp]
\centering
	\includegraphics[width=4.35cm]{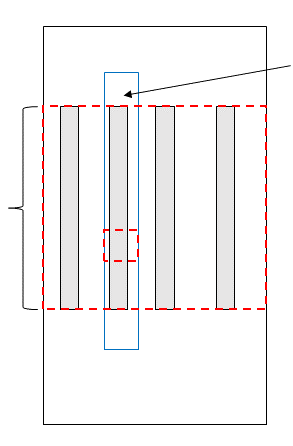}\hspace{2cm}
 \begin{picture}(0,0)
 \put(-123,-5){(i)}
 \put(-211,97){$\scriptstyle{\phi_w(I_y)}$}
\put(-65,20){$\scriptstyle{\psi_w(X_{v})}$}
\put(-55,160){$\scriptstyle{\psi_{w}\circ\psi_{w'}([0,1]^2)}$}
  \end{picture}
    \includegraphics[width=4.5cm]{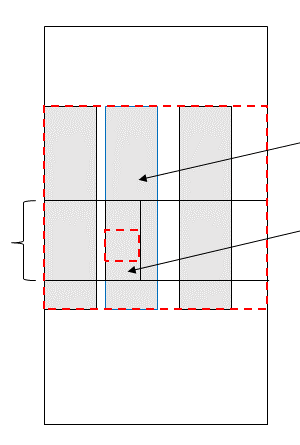}
    \begin{picture}(0,0)
    \put(-70,-5){(ii)}
    \put(0,127){$\scriptstyle{\psi_{w}([0,1]^2)}$}
\put(0,89){$\scriptstyle{\psi_{w}\circ\psi_{w'}([0,1]^2)}$}
\put(-10,20){$\scriptstyle{\psi_w(X_{v})}$}
\put(-164,83){$\scriptstyle{\phi_w(I_{y_{w'}})}$}
  \end{picture}
	\caption{Covering $Q^\circ_k(p,q)$ with approximate squares of level $k'$.}
	\label{fig4.1}
\end{figure}

 \vspace{0.2cm}
    \textit{Case (i): $l'<k$.}
    \vspace{0.2cm}

    This case is similar to  Case (i) in Lemma \ref{le4}. By \eqref{e20}, we have (see Figure \ref{fig4.1}-(i))
    \begin{align*}
        N_{k'}(X\cap Q^\circ_k(p,q))& \geq \max_{v\in t([w])}  N_{k'}(\psi_w(X_v)\cap Q^\circ_k(p,q))
        \\
        &\geq \max_{v\in t([w])} \sum_{[w']\in [E^{l'-l}]: i(w')=v,\atop \psi_{w}\circ\psi_{w'} ((0,1)^2))\cap Q_k^\circ(p,q)\neq \emptyset} N_{k'-l} \big(  \pi (X_{v} \cap  \psi_{w'}((0,1)^2)\cap  \psi_w^{-1}( Q^\circ_{k}(p,q))) \big)
        \\
        &\geq \max_{v\in t([w])} \sum_{[w']\in [E^{l'-l}]: i(w')=v,\atop \psi_{w}\circ\psi_{w'} ((0,1)^2))\cap Q_k^\circ(p,q)\neq \emptyset}  c_1^{-1} m^{(k'-k)(\theta(v,x_{w'},y_{w'},y)-\epsilon)}\cdot 1_{\{\theta(v,x_{w'},y_{w'},y)>0\}}
        \\
        &\geq c_1^{-1} n^{\epsilon-1} n^{-(l'-l)\epsilon} \beta_{l'-l}
        \geq c_1^{-1} n^{\epsilon-1} \cdot C_1 n^{(l'-l)(\frac{\log \beta-\epsilon}{\log n }-\epsilon)} \quad \text{by }\eqref{el5}
    \end{align*}
    where $y$ is in $\Sigma_Y^{k-l'}$ satisfies \eqref{e16} and the third line follows from Lemma \ref{le3}.

    \vspace{0.2cm}

    \textit{Case (ii): $k\leq l'$.}
    \vspace{0.2cm}

    Write $p=\sum_{i=1}^l p_in^{l-i}$ and $q=\sum_{i=1}^k q_i{m^{k-i}}$ where $p_i$ (resp. $q_i$) is in $ \Sigma_X$ (resp. $\Sigma_Y$). So that $q_1\cdots q_l =y_w$. Denote $\tilde{y}=q_{l+1}\cdots q_k.$ Note that
    \begin{align*}
        N_{k'}(X\cap Q^\circ_k(p,q))&=  \sum_{y\in \Sigma_Y^{l'-k}} \sum_{[w']\in [E^{l'-l}]:\atop i(w')\in t([w]),y_{w'}=\tilde{y}y}N_{k'}(X\cap \psi_{w}\circ\psi_{w'}((0,1)^2)).
    \end{align*}
    Write $\kappa(y)=\sum_{[w']\in [E^{l'-l}]:\atop i(w')\in t([w]),y_{w'}=\tilde{y}y}N_{k'}(X\cap \psi_{w}\circ\psi_{w'}((0,1)^2))$, then  (see Figure \ref{fig4.1}-(ii))
    \begin{align*}
        N_{k'}(X\cap Q^\circ_k(p,q)) &\geq \#\{y\in \Sigma_Y^{l'-k}:\kappa(y)\neq 0\} \cdot \kappa(z)
        \\
        &=N_{l'}\big( \pi\circ \psi_w (\bigcup_{v\in t([w])} X_{v}) \cap \pi (Q^\circ_k(p,q))\big) \cdot \kappa(z)
        \\
        &\geq \max_{v\in t([w])} N_{l'-l} \big( \pi(X_{v}) \cap \pi\circ \psi_w^{-1}(Q^\circ_k(p,q))) \big )  \cdot \kappa(z)
        \\
        &\geq \max_{v\in t([w])} c_1^{-1} m^{(l'-k)(\theta(v,\emptyset,\emptyset,\tilde{y})-\epsilon)}\cdot \sum_{[w']\in [E^{l'-l}]:\atop i(w')=v,y_{w'}=\tilde{y}z}N_{k'}(\psi_w(X_v)\cap \psi_{w}\circ\psi_{w'}((0,1)^2))
        \\
        &\geq \max_{v\in t([w])} c_1^{-1} m^{(l'-k)(\theta(v,\emptyset,\emptyset,\tilde{y})-\epsilon)}\cdot \sum_{[w']\in [E^{l'-l}]:\atop i(w')=v, y_{w'}=\tilde{y}z} c_1^{-1}m^{(k'-l')(\eta(v,x_{w'},y_{w'})-\epsilon)}
        \\
        &\geq \max_{v\in t([w])} c_1^{-2} \sum_{[w']\in [E^{l'-l}]:\atop i(w')=v, y_{w'}=\tilde{y}z} m^{(k'-k)(\theta(v,x_{w'},y_{w'},\emptyset)-\epsilon)}
        \\
        &\geq c_1^{-2} n^{\epsilon-1} n^{-(l'-l)\epsilon} \beta_{l'-l}\geq c_1^{-2} n^{\epsilon-1} \cdot C_1 n^{(l'-l)(\frac{\log \beta-\epsilon}{\log n}-\epsilon)} \quad \text{ by }\eqref{el5}
    \end{align*}
    for some $z$ in $ \Sigma_Y^{l'-k}$ with $\kappa(z)=\min \{\kappa(y): \kappa(y)\neq 0, y\in \Sigma_Y^{l'-k}\}, $  where the forth and fifth lines are both from Lemma \ref{le3}.

\end{proof}

Finally, we turn to the upper bound of the
lower dimension.

\begin{lemma}\label{low2}
    For any  $\epsilon>0$, for any $C>0$, there exist $k'\geq k\geq 1$, $p,q$ in $\mathbb{Z}$, such that
    $$N_{k'}\left(X\cap Q^\circ_k(p,q)\right)\leq Cm^{(k'-k)(\frac{\log \beta+\epsilon}{\log n}+2\epsilon)}.
    $$
\end{lemma}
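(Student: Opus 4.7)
The plan parallels the proof of Lemma~\ref{le2}, reversing inequalities and replacing $\alpha_s$ by $\beta_s$. First, since $\beta=\liminf_{s\to\infty}(\beta_s)^{1/s}$, for any $\epsilon>0$ there exist arbitrarily large $s$ with $\beta_s\le n^{s(\log\beta+\epsilon)/\log n}$. For such an $s$, let $w^\ast\in E^\ast$ with $|w^\ast|\ge\#V$ realize the outer minimum in the definition of $\beta_s$, let $v^\ast\in t([w^\ast])$ realize the inner maximum, and let $(y^\ast,(y^\ast)')\in\Sigma_Y^s\times\Sigma_Y^\ast$ realize the innermost minimum for $v^\ast$, so that
\begin{equation*}
\sum_{\substack{[w']\in[E^s]\\ i(w')=v^\ast,\ y_{w'}=y^\ast}} n^{s\theta(v^\ast,x_{w'},y_{w'},(y^\ast)')}\cdot 1_{\{\theta>0\}}\;\le\;\beta_s.
\end{equation*}

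Next, set up the geometry. Pick $\tilde{w}\in[w^\ast]$ with $t(\tilde{w})=v^\ast$, write $l=|\tilde{w}|$, and choose $k$ satisfying both $k-l=s$ and $l=\lfloor k\log_n m\rfloor$; as in Lemma~\ref{le2}, the pigeonhole argument on cycles in $w^\ast$ (available since $|w^\ast|\ge\#V$) lets us pad or shorten $\tilde{w}$ through admissible loops anchored at $v^\ast$, while $s$ is refined within the liminf subsequence so that the two length constraints are simultaneously compatible. Pick $k'$ with $\lfloor k'\log_n m\rfloor=k$, so that $l'-l=s$. Place $Q^\circ_k(p,q)\subseteq\psi_{\tilde{w}}((0,1)^2)$ with vertical sub-word equal to $y^\ast$: set $p=\sum_{i=1}^l x_{\tilde{w}_i}n^{l-i}$ and $q=\sum_{i=1}^l y_{\tilde{w}_i}m^{k-i}+\sum_{j=1}^{s} y^\ast_j m^{s-j}$.

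Third, estimate the cover count from above. Decomposing
\begin{equation*}
X\cap Q^\circ_k(p,q)=\bigcup_{v\in t([\tilde{w}])}\;\bigcup_{\substack{[w']\in[E^s]\\ i(w')=v,\ y_{w'}=y^\ast}}\psi_{\tilde{w}}\circ\psi_{w'}(X_{t(w')})\cap Q^\circ_k(p,q)
\end{equation*}
and refining each piece by the vertical band $\phi_{\tilde{w}}\circ\phi_{w'}(I_{(y^\ast)'})$ at the level-$k'$ scale (mirroring Case~(ii) of Lemma~\ref{low1}), the upper bound of Lemma~\ref{le3} together with the identity $m^{k'-k}=n^s$ and the standard boundary corrections yield
\begin{equation*}
N_{k'}\!\left(X\cap Q^\circ_k(p,q)\right)\;\le\;\#V\cdot c_1\cdot n^{O(1)+s\epsilon}\max_{v\in t([\tilde{w}])}\sum_{\substack{[w']\in[E^s]\\ i(w')=v,\ y_{w'}=y^\ast}} n^{s\theta(v,x_{w'},y_{w'},(y^\ast)')}\cdot 1_{\{\theta>0\}}.
\end{equation*}

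Finally, compare with $\beta_s$. For $v=v^\ast$ the inner sum is $\le\beta_s$ by construction. The principal difficulty is controlling this inner sum for $v\ne v^\ast$ at the common pair $(y^\ast,(y^\ast)')$, since the $\max_v$ in the definition of $\beta_s$ only directly guarantees $\min_{y,y'}f_v\le\beta_s$, and a priori $(y^\ast,(y^\ast)')$ is optimal only for $v^\ast$. The expected resolution is to exploit the optimality of $w^\ast$ under the min-max-min structure: one either extends $\tilde{w}$ (via admissible loops returning to $v^\ast$) until $t([\tilde{w}])$ collapses to $\{v^\ast\}$, or invokes a combinatorial transport argument pairing each $v$'s minimizer with $(y^\ast,(y^\ast)')$ at a multiplicative cost absorbed into the factor $\#V$. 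Granting this step, combining with $\beta_s\le n^{s(\log\beta+\epsilon)/\log n}$ yields $N_{k'}(X\cap Q^\circ_k(p,q))\le C_0\,n^{s\epsilon}\beta_s\le C_0\,m^{(k'-k)((\log\beta+\epsilon)/\log n+\epsilon)}$ for a constant $C_0$ depending only on $\#V$ and $c_1$. For any prescribed $C>0$, choosing $s$ (and hence $k,k'$) large enough that $C_0\le C n^{s\epsilon}$ upgrades the bound to $C m^{(k'-k)((\log\beta+\epsilon)/\log n+2\epsilon)}$, completing the construction.
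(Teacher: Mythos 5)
Your overall strategy (pick $s$ along the liminf subsequence, pick the optimizing data in the definition of $\beta_s$, build one approximate square around it, and bound the count via Lemma~\ref{le3}) is the same as the paper's, but the proof is not complete: the step you explicitly label ``granting this step'' is exactly the crux, and neither of your proposed patches works. In the paper the issue is handled structurally: the $\max_{v\in t([w])}$ is built into the definition of $\beta_k$, and the proof selects a single pair $(y,y')$ serving the whole terminal-vertex set of the column class, so that after covering $X\cap Q^\circ_k(p,q)$ one only pays a factor $\#V$ before comparing with $\beta_{s_j}$. Your first patch (extend $\tilde w$ by loops at $v^\ast$ until $t([\tilde w])$ collapses to $\{v^\ast\}$) fails because admissible loops through $v^\ast$ need not exist (a vertex can be the terminus of arbitrarily long words without lying on any cycle), and in any case appending loops does not control the equivalence class: $[\tilde w]$ is determined by $\psi_{\tilde w}$, and other members of the class can terminate at other vertices. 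Your second patch (a ``transport'' of each $v$'s minimizer at multiplicative cost $\#V$) is not an argument: for $v\neq v^\ast$ the sum at the pair $(y^\ast,(y^\ast)')$ can exceed the corresponding minimum, hence $\beta_s$, by a factor exponential in $s$, which no constant depending only on $\#V$ and $c_1$ can absorb.

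There are also two concrete problems in your geometric setup. First, your square never encodes $(y^\ast)'$: with $k-l=s$ and $q$ ending in the digits of $y^\ast$ only, $Q_k(p,q)$ contains the full rectangles $\psi_{\tilde w}\circ\psi_{w'}([0,1]^2)$ for $y_{w'}=y^\ast$, so Lemma~\ref{le3} only yields exponents $\eta(v,x_{w'},y_{w'})=\theta(v,x_{w'},y_{w'},\emptyset)\geq\theta(v,x_{w'},y_{w'},(y^\ast)')$; you cannot ``refine by the band $\phi_{\tilde w}\circ\phi_{w'}(I_{(y^\ast)'})$'' at the counting stage, because $X\cap Q^\circ_k(p,q)$ is not contained in that band. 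The paper's proof instead chooses $k=\lfloor k\log_n m\rfloor+s_j+h$, puts the digit string $y_{\tilde w}\,y\,y'$ into $q$, and takes $\lfloor k'\log_n m\rfloor=l+s_j$, so the band restriction is enforced by the square itself and the $\theta$-estimate of Lemma~\ref{le3} applies. Second, insisting that $\tilde w\in[w^\ast]$ forces $l=|w^\ast|$, which is generically incompatible with the two constraints $l=\lfloor k\log_n m\rfloor$ and $k-l=s$ (these force $l$ to be of size roughly $s\log m/(\log n-\log m)$, while $|w^\ast|$ is dictated by the minimizer in the definition of $\beta_s$); ``padding or shortening through loops anchored at $v^\ast$'' changes the length only by multiples of cycle lengths, when such loops exist at all, and refining $s$ inside the liminf subsequence does not resolve this. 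The paper avoids the difficulty by taking for $\tilde w$ an arbitrary word of length exactly $l$ with $t(\tilde w)\in t([w])$ (existence by pigeonhole, since $|w|\geq\#V$), with no further relation to $w$: only the terminal vertices of the column class and the digits $y,y'$ enter the estimate.
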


\begin{proof}
It follows from \eqref{el}, there exist $C_1>0$ and a sequence $\{s_j\}_{j\geq 1}$ such that
\begin{equation}\label{el6}
    \beta_{s_j} \leq C_1 n^{s_j (\frac{\log \beta+\epsilon}{\log n})}.
\end{equation}
Fix large $j$, let $w\in E^*$ with $|w|\geq \#V$, $y=y_1\cdots y_{s_j}\in \Sigma_Y^{s_j}$ and $y'=y'_1 \cdots y'_h \in \Sigma_Y^h$   such that
$$
 \beta_{s_j}= \max_{v\in t([w])}\sum_{[w']\in [E^{s_j}]:\atop i(w')=v, y_{w'}=y}  n^{s_j\theta(v,x_{w'},y_{w'},y')}\cdot 1_{\{\theta(v,x_{w'},y_{w'},y')>0\}}.
$$

\begin{figure}[htp]
\centering
\includegraphics[width=6.5cm]{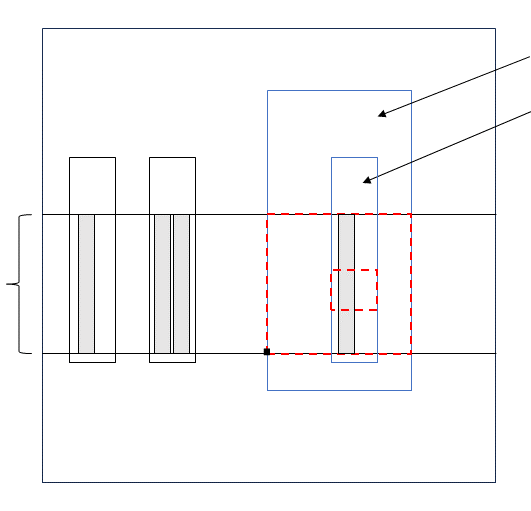}
  \begin{picture}(0,0)
\put(0,136){$\scriptstyle{\psi_{\tilde{w}}\circ\psi_{w'} ([0,1]^2)}$}
\put(0,156){$\scriptstyle{\psi_{\tilde{w}}(X_v)}$}
\put(-223,75){$\scriptstyle{\phi_{\tilde{w}}(I_{yy'})}$}
\put(-113,44){$\scriptstyle{(p,q)}$}
  \end{picture}
	\caption{The choice of $Q_k(p,q)$ according to $j$, $y$ and $y'$.}
	\label{fig4.2}
\end{figure}

Choose a $k\geq 1$ such that $k=\lfloor k\log_n m\rfloor+s_j+h$ and let $l=\lfloor k\log_n m \rfloor$. Since $|w|\geq \#V$ and $\{i(w_1),\cdots, i(w_{|w|}),t(w)\} \subseteq V$, there must  exist $\tilde{w}\in E^l$ such that $t(\tilde{w})\in t([w])$. Let $p=\sum_{i=1}^l x_{\tilde{w}_i} n^{l-i}$ and $q=\sum_{i=1}^{l} y_{\tilde{w}_i} m^{k-i} +\sum_{i=1}^{s_j} y_i m^{h+s_j-i}+\sum_{i=1}^h y'_i m^{h-i}$. Clearly $X\cap Q^\circ_k(p,q)\neq \emptyset$. Choose a $k'\geq 1$ such that $\lfloor k' \log_n m \rfloor=l+s_j$. Denote $l'=\lfloor k'\log_n m\rfloor.$ Similar to  Case (i) in Lemma \ref{le4}, by \eqref{e20}, we have (see Figure \ref{fig4.2})
\begin{align*}
        N_{k'}(X\cap Q^\circ_{k}(p,q))&\leq \#V \cdot \max_{v\in t([\tilde{w}])} N_{k'}(\psi_{\tilde{w}}(X_v) \cap Q^\circ_k(p,q))
        \\
        &\leq\#V \cdot \max_{v\in t([\tilde{w}])}  \sum_{[w']\in [E^{l'-l}]:i(w')=v,\atop \psi_{\tilde{w}}\circ\psi_{w'} ((0,1)^2) \cap Q^\circ _k (p,q)\neq \emptyset} N_{k'}\big (\psi_{\tilde{w}}(X_{v} \cap \psi_{w'}((0,1)^2))\cap Q^\circ_k(p,q)\big )
        \\
        &\leq\#V \cdot   \max_{v\in t([\tilde{w}])} \sum_{[w']\in [E^{l'-l}]:i(w')=v,\atop \psi_{\tilde{w}}\circ\psi_{w'} ((0,1)^2) \cap Q^\circ _k (p,q)\neq \emptyset} c_1 m^{(k'-k)(\theta(v,x_{w'},y_{w'},y')+\epsilon)} \cdot 1_{\{\theta(v,x_{w'},y_{w'},y')>0\}}
        \\
        &\leq\#V \cdot c_1 n^{\epsilon+1} n^{s_j\epsilon} \cdot  \beta_{s_j}
        \leq \#V \cdot c_1 C_1 n^{\epsilon+1}\cdot   n^{s_j(\frac{\log \beta+\epsilon}{\log n }+\epsilon)} \quad \text{by }\eqref{el6}.
    \end{align*}

   For any $C>0$, there exists $j\geq 1$ such that $\#V \cdot c_1 C_1  n^{\epsilon+1}  n^{\frac{\log \beta+\epsilon}{\log n}+2\epsilon}n^{-s_j \epsilon}< C$, therefore,
   \begin{equation}
    N_{k'}(X\cap Q^\circ_{k}(p,q))\leq \#V \cdot  c_1 C_1 n^{\epsilon+1}  n^{(\frac{\log \beta+\epsilon}{\log n}+2\epsilon)}n^{-s_j\epsilon} m^{(k'-k)(\frac{\log \beta+\epsilon}{\log n}+2\epsilon))}
    < C m^{(k'-k)(\frac{\log \beta+\epsilon}{\log n}+2\epsilon)}, \nonumber
   \end{equation}
   where $k,k',p,q$ are chosen as before according to $j$.
\end{proof}

\begin{proof}[Proof of Theorem \ref{th1} for lower dimension]
    Combining Lemmas \ref{low1} and \ref{low2}, we obtain that $\dim_L X=\frac{\log \beta}{\log n}$.
\end{proof}

\section{Comparison between box and Assouad dimensions}\label{sec5}
In this section, we prove Theorems \ref{th3} and \ref{th4}.   Recall that $$\mathcal{I}=\{(x_{\omega_1},y_{\omega_1})(x_{\omega_2},y_{\omega_2})\cdots:\omega \in E^\infty \} \subseteq \big \{ \{0,\cdots, n-1\}\times \{0,\cdots, m-1\} \big \}^{\mathbb{N}}
    $$
    and
    $$
    \pi \mathcal{I}=\{y_{\omega_1} y_{\omega_2}\cdots:\omega\in E^\infty\} \subseteq \{0,\cdots, m-1\}^{\mathbb{N}}.
    $$
 For $k$ in $\mathbb N$,  denote $\mathcal{I}^k$ (resp. $\pi\mathcal{I}^k$) the collection of words with length $k$ which appear in $\mathcal{I}$ (resp. $\pi\mathcal{I}$), that is
    $$
    \mathcal{I}^k=\{(x_{w_1},y_{w_1})(x_{w_2},y_{w_2})\cdots(x_{w_k},y_{w_k}):w \in E^k \}.
    $$
   {Recall that the \textit{topological entropy} of $\mathcal{I}$, $\pi \mathcal{I}$ and $\pi^{-1}(y)$ (for $y\in \pi \mathcal{I}$) are defined as $$\begin{aligned}
    h_{\textup{top}}(\mathcal{I}):&=\lim_{k\to \infty} \frac{1}{k}\log \# \mathcal{I}^k,\\
    h_{\textup{top}}(\pi\mathcal{I}):&=\lim_{k\to \infty} \frac{1}{k}\log \#\pi\mathcal{I}^k,\\
    h_{\textup{top}}(\pi^{-1}(y)):&=\limsup_{k\to \infty} \frac{1}{k} \log \#\{x_{w}:w\in E^k,y_w=y|_k\}
    \end{aligned}
    $$ (the existence of the first two limits follow by a submultiplicativity argument).  The following inequality is due to Bowen \cite{B71}:
    \begin{equation}\label{e22}
    h_{\textup{top}}(\mathcal{I}) \leq h_{\textup{top}}(\pi \mathcal{I}) +\sup_{y\in \pi \mathcal{I}} h_{\textup{top}}(\pi^{-1}(y)).
    \end{equation}
    \vspace{0.2cm}

Firstly, let us look at the irreducible case.
\begin{theorem}\label{th2}
    Let $X$ be same as in Theorem \ref{th1}. Assume that  $G=(V,E)$ is irreducible. We have
    \begin{equation}
    \dim_A X=\frac{h_{\textup{top}}(\pi \mathcal{I})}{\log m}+\sup_{y\in \pi \mathcal{I}}\frac{h_{\textup{top}}(\pi^{-1}(y))}{\log n}.\nonumber
    \end{equation}
    And the following three statements are equivalent:
    \begin{enumerate}[(i).]
        \item $\dim_B X =\dim_A X$,
        \item $\dim_H X =\dim_B X$,
        \item \eqref{e22} becomes an equality, i.e.
    \begin{equation}\label{e23}
    h_{\textup{top}}(\mathcal{I})=h_{\textup{top}}(\pi\mathcal{I})+\sup_{y\in \pi \mathcal{I}} h_{\textup{top}}(\pi^{-1}(y)).
    \end{equation}
    \end{enumerate}

\end{theorem}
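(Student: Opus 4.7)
The plan is to reduce $\dim_A X$ (via Theorem~\ref{th1}) and $\dim_B X$ (via~\eqref{e25}) to topological entropies of $\mathcal{I}$ and $\pi\mathcal{I}$, and then to invoke a Kenyon--Peres style variational principle for $\dim_H X$ to close the circle of equivalences. First, irreducibility of $G$ (together with Proposition~\ref{p1}) forces $\gamma := \dim_B \pi(X_v)$ to be independent of $v$ and $\tilde V = V$, so $\eta(v, x_w, y_w) = \gamma$ for every admissible $(v, w)$. Consequently
\begin{equation*}
\alpha_k = n^{k\gamma}\,\tilde M_k, \qquad \tilde M_k := \max_{v \in V}\max_{y \in \Sigma_Y^k} \#\{x_w : w \in E^k,\, i(w) = v,\, y_w = y\},
\end{equation*}
and the entropy formula for graph-directed self-similar sets of finite type (used in the proof of Proposition~\ref{p1} via \cite{DN04}) identifies $\gamma = h_{\textup{top}}(\pi\mathcal{I})/\log m$.

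The core technical claim is
\begin{equation*}
\mu := \lim_{k\to\infty}\tfrac{1}{k}\log \tilde M_k \;=\; \sup_{y \in \pi\mathcal{I}} h_{\textup{top}}(\pi^{-1}(y)) \;=:\; H.
\end{equation*}
The inequality $H \leq \mu$ is immediate from $\#\{x_w : y_w = y|_k\} \leq \#V\cdot \tilde M_k$. For $H \geq \mu$, fix $\epsilon > 0$ and a large $K$ with $\tilde M_K \geq e^{K(\mu - \epsilon)}$, let $(v^\ast, y^\ast)$ achieve the maximum, and use pigeonhole to locate a common terminal vertex $v^{\ast\ast}$ accounting for $\geq \tilde M_K/\#V$ of these $x$-labels. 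Irreducibility supplies a connecting word $u$ of length $\leq \#V$ from $v^{\ast\ast}$ to $v^\ast$. Concatenating $p$ good blocks interspersed with $u$ yields $(\tilde M_K/\#V)^p$ distinct $x$-labels along the periodic $y$-label $(y^\ast y_u)^p$, so $y_\infty := (y^\ast y_u)^\infty \in \pi\mathcal{I}$ satisfies $h_{\textup{top}}(\pi^{-1}(y_\infty)) \geq (K(\mu - \epsilon) - \log\#V)/(K + |u|) \to \mu - \epsilon$ as $K \to \infty$. The same construction gives $\tilde M_{k_1 + k_2 + |u|} \geq \tilde M_{k_1}\tilde M_{k_2}/\#V$, upgrading $\limsup$ to a genuine limit via Fekete's lemma. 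Together with the first paragraph this gives $\dim_A X = \gamma + H/\log n$ in the form stated.

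Specialising~\eqref{e25} to the single irreducible component ($r = 1$) yields $\dim_B X = h_{\textup{top}}(\pi\mathcal{I})/\log m + (h_{\textup{top}}(\mathcal{I}) - h_{\textup{top}}(\pi\mathcal{I}))/\log n$, whence
\begin{equation*}
(\dim_A X - \dim_B X)\log n \;=\; h_{\textup{top}}(\pi\mathcal{I}) + H - h_{\textup{top}}(\mathcal{I}),
\end{equation*}
which vanishes exactly when Bowen's inequality~\eqref{e22} is tight, giving (i)$\Leftrightarrow$(iii). For (iii)$\Leftrightarrow$(ii) I invoke the Kenyon--Peres/Ledrappier--Young formula \cite{KP96,F23}, rewritten as
\begin{equation*}
\dim_H X = \sup_\mu \left[ h_{\pi\mu}(\tilde\sigma)\Bigl(\tfrac{1}{\log m} - \tfrac{1}{\log n}\Bigr) + \tfrac{h_\mu(\sigma)}{\log n}\right].
\end{equation*}
Both coefficients are positive, so the sup is bounded above by $\dim_B X$ and attains $\dim_B X$ iff an invariant $\mu^\ast$ satisfies $h_{\mu^\ast} = h_{\textup{top}}(\mathcal{I})$ and $h_{\pi \mu^\ast} = h_{\textup{top}}(\pi \mathcal{I})$ simultaneously. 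Under (iii), applying Bowen's inequality to the (unique, Parry) measure of maximal entropy on $\mathcal{I}$ saturates both bounds at once, so $\mu^\ast$ exists and realises $\dim_H X = \dim_B X$. Conversely, if (ii) holds then $\mu^\ast$ exists, and a Jensen/concavity argument in the Kenyon--Peres entropy formula (specialising in the Bedford--McMullen case to the power-mean equality in $\dim_H X = \log_m\sum_j N_j^{\log_n m}$) forces the projected Parry measure's conditional entropy $h_{\mu^\ast}(\sigma|\pi)$ to equal $\sup_y h_{\textup{top}}(\pi^{-1}(y))$, which together with $h_{\pi\mu^\ast} = h_{\textup{top}}(\pi\mathcal{I})$ gives (iii).

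The chief technical obstacle is the lower bound $H \geq \mu$ in the second paragraph: a naive compactness/diagonal argument on the maximising finite words $y^{(k)}$ cannot produce an infinite $y_\infty$ with $y^{(k)} = y_\infty|_k$ along a subsequence, so the irreducibility-driven periodic extension via a short connecting word $u$ is essential. The other delicate step is the implication (ii)$\Rightarrow$(iii), where the Jensen/concavity strictness inside the Kenyon--Peres formula must be used to convert the coincidence of $\dim_H$ and $\dim_B$ into Bowen-equality of entropies rather than merely the Bowen inequality that comes for free.
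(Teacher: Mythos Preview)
Your derivation of the Assouad dimension formula and the equivalence (i)$\Leftrightarrow$(iii) are essentially the paper's approach (Lemma~\ref{le5} plus the direct comparison with~\eqref{e25}); your periodic concatenation $y_\infty=(y^*y_u)^\infty$ is a slightly cleaner variant of the paper's construction, which interleaves optimal blocks of \emph{increasing} lengths but is based on the same irreducibility-driven splicing idea. The implication (iii)$\Rightarrow$(ii) via the measure of maximal entropy is also the paper's route, though you should phrase it through the Ledrappier--Walters relative variational principle~\eqref{ee} rather than ``Bowen's inequality'': what you actually use is the measure-theoretic bound $h_\mu\le h_{\pi\mu}+\sup_y h_{\textup{top}}(\pi^{-1}(y))$, applied to the Parry measure, which under (iii) forces $h_{\pi\mu}=h_{\textup{top}}(\pi\mathcal{I})$.

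The genuine gap is (ii)$\Rightarrow$(iii). From (ii) you obtain an invariant $\mu^*$ with $h_{\mu^*}=h_{\textup{top}}(\mathcal{I})$ and $h_{\pi\mu^*}=h_{\textup{top}}(\pi\mathcal{I})$, and Ledrappier--Walters then yields only
\[
h_{\textup{top}}(\mathcal{I})-h_{\textup{top}}(\pi\mathcal{I})\;=\;\int h_{\textup{top}}(\pi^{-1}(y))\,d(\pi\mu^*)(y)\;\le\;\sup_{y\in\pi\mathcal{I}}h_{\textup{top}}(\pi^{-1}(y)),
\]
which is Bowen's direction, not its reverse. Your ``Jensen/concavity'' allusion to the power-mean identity in the classical Bedford--McMullen formula does not transparently generalise: in the single-vertex case the equality $\dim_H=\dim_B$ forces all non-empty rows to carry the same count, but in the graph-directed/sofic setting there is no single column-count vector on which to run Jensen, and nothing in your sketch upgrades the integral above to the supremum. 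The paper closes this gap by invoking a recent result of Z.~Feng~\cite{F24}: for a factor map out of a subshift with weak specification (which $\mathcal{I}$ enjoys since $G$ is irreducible), the fibre entropy $y\mapsto h_{\textup{top}}(\pi^{-1}(y))$ is \emph{constant} on $\pi\mathcal{I}$. With that in hand the integral equals the supremum and (iii) follows immediately. You should either supply a concrete concavity argument that genuinely works at the level of sofic shifts, or import this constancy-of-fibre-entropy result.
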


Before proving this theorem, we prepare some lemmas. The following  lemma is a graph-directed version of the expression of $\dim_A X$ of Mackay \cite{M11}.

\begin{lemma}\label{le5}
    When $G$ is irreducible, we have
    \begin{equation}
    \dim_A X=\frac{h_{\textup{top}}(\pi \mathcal{I})}{\log m}+\sup_{y\in \pi \mathcal{I}}\frac{h_{\textup{top}}(\pi^{-1}(y))}{\log n}.\nonumber
    \end{equation}
\end{lemma}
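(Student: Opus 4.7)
The strategy is to invoke Theorem \ref{th1}, which already gives $\dim_A X = \log\alpha/\log n$, then simplify $\alpha_k$ drastically using irreducibility, and finally identify the resulting growth rate with the supremum of topological fiber entropies via an explicit periodic construction of witness words in $\pi\mathcal{I}$.

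First, since $G$ is irreducible, Proposition \ref{p1} yields a common value $d := \dim_B \pi(X_v)$ for all $v \in V$, so by \eqref{e17} the function $\eta(v, x_w, y_w)$ is identically $d$; irreducibility also forces $\tilde V = V$ once $k \geq \#V$. The definition of $\alpha_k$ therefore collapses to
$$\alpha_k = n^{kd}\, M_k, \qquad M_k := \max_{v \in V}\,\max_{y \in \Sigma_Y^k} \#\{[w]\in[E^k] : i(w) = v,\ y_w = y\}.$$
A standard covering argument for the graph-directed self-similar set $\pi X$ (uniform ratio $1/m$, finite type) identifies $d = h_{\textup{top}}(\pi\mathcal{I})/\log m$, and the existence of $a := \lim_k (1/k)\log M_k$ is inherited from Lemma \ref{lim}. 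The inequality $a \geq \sup_{y \in \pi\mathcal{I}} h_{\textup{top}}(\pi^{-1}(y))$ is immediate from $\#\{x_w : w \in E^k,\ y_w = y|_k\} \leq \#V \cdot M_k$, valid because each class with $y$-word $y|_k$ admits at most $\#V$ starting vertices.

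For the reverse inequality $a \leq \sup_{y \in \pi\mathcal{I}} h_{\textup{top}}(\pi^{-1}(y))$, the plan is an explicit periodic construction. Fix $k$ and let $v^*_k, y^*_k$ realize $M_k$; choose one representative in $E^k$ for each of the $M_k$ classes starting at $v^*_k$ with $y$-word $y^*_k$. By pigeonhole on the terminal vertex, $\lfloor M_k/\#V \rfloor$ of these representatives share a common terminal $v^{**}_k$. Irreducibility supplies a word $c'_k \in E^*$ with $i(c'_k) = v^{**}_k$, $t(c'_k) = v^*_k$, and $|c'_k| \leq \#V$; set $c_k := y_{c'_k}$. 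Then $\tilde y^{(k)} := (y^*_k c_k)^\infty$ lies in $\pi\mathcal{I}$, and independent choices of representatives across $l$ successive blocks (with fixed connector $c'_k$) produce at least $\lfloor M_k/\#V \rfloor^l$ distinct $x$-preimages of $\tilde y^{(k)}|_{l(k+|c_k|)}$. Hence
$$h_{\textup{top}}(\pi^{-1}(\tilde y^{(k)})) \geq \frac{\log \lfloor M_k/\#V \rfloor}{k + |c_k|} \xrightarrow{k \to \infty} a,$$
so $\sup_y h_{\textup{top}}(\pi^{-1}(y)) \geq a$. Combining with $\log\alpha = d\log n + a$ then gives the claimed formula for $\dim_A X$.

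The main obstacle is the reverse inequality: it asks one to exhibit a single infinite word whose fibers attain the growth rate $a$ of the finite-scale maxima $M_k$. A naive compactness/subsequential-limit argument applied to maximizing finite words $y^{(k)}$ fails to control fiber entropy, since $\log\#\{x_w : y_w = y|_k\}$ can fluctuate on the $o(k)$-scale in a way not controlled by convergence of $y|_k$ in $\pi\mathcal{I}$. The periodic-extension strategy uses irreducibility of $G$ to stitch finite-scale maximizers into a genuine element of $\pi\mathcal{I}$ whose fiber entropy can be read off directly, sidestepping this issue.
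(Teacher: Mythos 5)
Your proposal is correct and follows essentially the same route as the paper: both reduce via Theorem \ref{th1} to showing that the exponential growth rate of the per-scale maxima $M_k=\max_{v,y}\#\{x_w: w\in E^k,\ i(w)=v,\ y_w=y\}$ equals $\sup_{y\in\pi\mathcal{I}}h_{\textup{top}}(\pi^{-1}(y))$, with the easy direction by the same counting bound and the hard direction by pigeonholing on terminal vertices and using irreducibility to supply bounded-length connectors. The only difference is cosmetic: you repeat a single maximizing block of length $k$ periodically and let $k\to\infty$, whereas the paper stitches near-maximizers of all scales (blocks of growing length) into one infinite word $\dot{y}$; both constructions yield the same product lower bound on fiber counts and hence the same conclusion.
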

\begin{proof}
    For $k\geq 1$, $y\in \Sigma_Y^k$ and $v,v'\in V$, denote $\mathcal{I}^k_{v,v'}(y)=\{(x_{w_1},y_{w_1})\cdots(x_{w_k},y_{w_k}):w\in E^k, v\stackrel{w}{\to}v',y_w=y\}$, $\mathcal{I}^k_{v}(y)=\bigcup_{v'\in V} \mathcal{I}^k_{v,v'}(y)$ and $\mathcal{I}^k(y)=\bigcup_{v\in V}\mathcal{I}^k_v(y)$.
    Since $G$ is irreducible,
    $$
    \alpha_k=\max_{v\in V} \max_{y\in \Sigma_Y^k} n^{k\dim_B \pi(X)}\cdot \# \mathcal{I}^k_v(y).
    $$
    Noticing that $\dim_B \pi(X)=\lim_{k\to \infty} \frac{\log \# \pi \mathcal{I}^k}{k\log m}=\frac{h_{\textup{top}}(\pi \mathcal{I})}{\log m}$,  with Theorem \ref{th1} in hand,  it suffices to prove that
    \begin{equation}\label{e24}
    \sup_{y\in \pi \mathcal{I}} h_{\textup{top}}(\pi^{-1}(y))=\lim_{k\to \infty}\max_{v\in V}\max_{y\in \Sigma_Y^k} \frac{\log \#\mathcal{I}^k_v(y)}{k}.
    \end{equation}

    Since for $y\in \pi \mathcal{I}$,
    $$
    h_{\textup{top}}(\pi^{-1}(y))=\limsup_{k\to \infty}\frac{\log \# \mathcal{I}^k(y|_k)}{k}\leq \limsup_{k\to \infty}  \max_{v\in V} \frac{\log \# \mathcal{I}_v^k(y|_k)+\log \#V}{k},$$
    we have the direction $``\leq$'' in \eqref{e24}. On the other hand, let $\Delta$ be the right hand side
    of \eqref{e24}. Then for $\epsilon>0$, there exists $N$ in $\mathbb{N}$ such that for all $k$ in $\mathbb{N}$, there are $v_k,v'_k$ in $V$, $w^{(k)}$ in $E^{k+N}$ with $v_k\stackrel{w^{(k)}}{\longrightarrow} v_k'$ and $y^{(k)}=y_{w^{(k)}}$, such that
    \begin{equation}\label{e27}
    \log \# \mathcal{I}^{k+N}_{v_k,v'_k}(y^{(k)})\geq  \log \# \mathcal{I}^{k+N}_{v_k}(y^{(k)})-\log \#V\geq (k+N)(\Delta-\epsilon)-\log \#V.
    \end{equation}
    Noticing that  $G$ is irreducible, there exists $S$ in $\mathbb{N}$, such that for each distinct  pair $v,v'$ in $V$, there exists $w\in E^*$ with $|w|\leq S$ and $v\stackrel{w}{\to}v'$. For any $k$ in $\mathbb{N}$, pick a  directed path $\tilde{w}^{(k)}$  from  $v'_k$ to $v_{k+1}$ with length no more than $S$ if $v'_k\neq v_{k+1}$; pick $\tilde{w}^{(k)}=\emptyset$ if $v'_k=v_{k+1}$.  Write  $\tilde{y}^{(k)}=y_{\tilde{w}^{(k)}}$, $\dot{y}=y^{(1)}\tilde{y}^{(1)}y^{(2)}\tilde{y}^{(2)}\cdots \in \pi \mathcal{I}$, $\dot{y}^{(k)}=y^{(1)}\tilde{y}^{(1)}\cdots y^{(k)}\tilde{y}^{(k)}$ and $s_k=|\dot{y}^{(k)}|$. Noticing that  $\frac{k(k+1)}{2}+kN\leq s_k\leq \frac{k(k+1)}{2}+kN+kS $, by using \eqref{e27}, we have
    $$
    \begin{aligned}
    h_{\textup{top}}(\pi^{-1}(\dot{y}))&\geq \limsup_{k\to \infty} \frac{\log \# \mathcal{I}^{s_k}(\dot{y}^{(k)})}{s_k}\\
    &\geq \limsup_{k\to \infty} \frac{\log \#\mathcal{I}^{1+N}_{v_1,v'_1}(y^{(1)})+\cdots+\log \#\mathcal{I}^{k+N}_{v_k,v'_k}(y^{(k)}) }{s_k} \geq \Delta-\epsilon,
    \end{aligned}
    $$
    which gives the direction $``\geq $'' in \eqref{e24} by the arbitrary of $\epsilon.$
\end{proof}

\begin{lemma}\label{le6}
    When $G$ is irreducible, equality \eqref{e23} holds if and only if  $\dim_H X=\dim_B X.$
\end{lemma}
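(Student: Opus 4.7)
The plan is to combine the Kenyon--Peres formula for the Hausdorff dimension of irreducible sofic $(\times m,\times n)$-carpets with H\"older's inequality applied to the fibre counts. Specifically, define
$$N(y) = \#\{[w] \in [E^k] : y_w = y\}, \qquad s = \frac{\log m}{\log n} \in (0,1),$$
so that Kenyon--Peres \cite{KP96} yields, in our irreducible setting,
$$\dim_H X = \frac{1}{\log m} \lim_{k \to \infty} \frac{1}{k} \log \sum_{y \in \pi\mathcal{I}^k} N(y)^s.$$
Together with \eqref{e25} (where the max reduces to the single irreducible component) and Lemma \ref{le5}, the entire claim reduces to a counting comparison of the three exponential sums $\sum_y N(y)^s$, $\sum_y N(y) = \#\mathcal{I}^k$, and $\#\pi\mathcal{I}^k$.

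The first step I would take is to recover the routine inequality $\dim_H X \le \dim_B X$ via H\"older with exponents $1/s$ and $1/(1-s)$:
$$\sum_{y \in \pi\mathcal{I}^k} N(y)^s \le \Bigl(\sum_{y} N(y)\Bigr)^{s} \cdot (\#\pi\mathcal{I}^k)^{1-s}.$$
Taking $\tfrac{1}{k}\log$ and using $s/\log m = 1/\log n$ and $(1-s)/\log m = 1/\log m - 1/\log n$, the right-hand side converges to $\dim_B X$, and H\"older is asymptotically tight on the exponential scale precisely when $N(y)$ is essentially constant in $y$.

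For the direction \eqref{e23} $\Rightarrow$ $\dim_H X = \dim_B X$, the hypothesis $h_{\textup{top}}(\mathcal{I}) = h_{\textup{top}}(\pi \mathcal{I}) + \sup_y h_{\textup{top}}(\pi^{-1}(y))$ says that the total mass $\sum_y N(y) \asymp \#\pi\mathcal{I}^k \cdot \max_y N(y)$ saturates its absolute upper bound. A pigeonhole/large-deviation argument (using the irreducibility of $G$ to guarantee uniformity of subexponential constants) then forces $N(y) \ge \max_y N(y) \cdot e^{-k\epsilon}$ for a $(1-e^{-k\epsilon})$-proportion of $y \in \pi\mathcal{I}^k$, which when inserted into $\sum_y N(y)^s$ matches the H\"older upper bound up to $e^{O(k\epsilon)}$, giving $\dim_H X \ge \dim_B X$.

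For the converse $\dim_H X = \dim_B X$ $\Rightarrow$ \eqref{e23}, asymptotic H\"older-tightness means that the $\ell^s$- and $\ell^1$-means of $\{N(y)\}_{y\in\pi\mathcal{I}^k}$ agree on the exponential scale. Since $s < 1$, the power-mean inequality then forces $\max_y N(y)$ and the arithmetic mean $\sum_y N(y)/\#\pi\mathcal{I}^k$ to have the same exponential growth rate, and dividing $\log$ by $k$ yields $\sup_y h_{\textup{top}}(\pi^{-1}(y)) = h_{\textup{top}}(\mathcal{I}) - h_{\textup{top}}(\pi\mathcal{I})$, i.e.\ \eqref{e23}. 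The main obstacle is the rigorous translation of these ``asymptotically tight'' H\"older statements into entropy equalities: one must control the distribution of fibre sizes well enough that an $\epsilon$-exponential loss on a small proportion of $y$'s does not spoil the argument. This is where irreducibility plays its essential role, since it guarantees that $\max_y N(y) \cdot (\#\pi\mathcal{I}^k)$ and $\#\mathcal{I}^k$ differ from their $\limsup/\liminf$ only by subexponential factors, allowing the pigeonhole estimates to close cleanly.
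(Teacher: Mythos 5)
Your framework (the Kenyon--Peres counting formula from \cite{KP96} plus H\"older with exponent $s=\log m/\log n$) is a genuinely different, more combinatorial route than the paper's, which instead runs through the Fraser--Jurga criterion that $\dim_H X=\dim_B X$ iff a measure of maximal entropy on $\mathcal{I}$ projects to one on $\pi\mathcal{I}$ \cite{F23}, the Ledrappier--Walters relative variational principle \cite{LW77}, and Feng's fibre-comparability theorem \cite{F24}. The direction \eqref{e23} $\Rightarrow\dim_H X=\dim_B X$ does go through in your setup, and in fact more simply than your pigeonhole: since $s-1<0$ and $N(y)\le\max_{y'}N(y')$, one has $\sum_y N(y)^s\ge\bigl(\max_{y'}N(y')\bigr)^{s-1}\sum_y N(y)$, and by Lemma \ref{le5} (equation \eqref{e24}) the maximum grows like $e^{k\sup_y h_{\textup{top}}(\pi^{-1}(y))}$; note also that your intermediate claim that a $(1-e^{-k\epsilon})$-proportion of $y$'s carries near-maximal fibres does not follow from pigeonhole (one only gets a subexponentially small proportion), though that weaker statement suffices here.

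The converse, however, has a genuine gap: the step ``since $s<1$, the power-mean inequality forces $\max_y N(y)$ and the arithmetic mean to have the same exponential growth rate'' is false as a counting implication, and it is exactly the crux of the lemma. Consider fibre statistics with $\#\pi\mathcal{I}^k\asymp 2^k$, $N(y)\asymp 4^k$ for every $y$ except a single exceptional word with fibre $\asymp 6^k$ (possible for $n$ large). Then $\#\mathcal{I}^k\asymp 8^k$, $\sum_y N(y)^s\asymp 2^k4^{ks}$ because $s\log\tfrac32<\log 2$, so the H\"older bound is attained at exponential scale --- i.e.\ ``$\dim_H=\dim_B$'' at the level of these counts --- while the maximal fibre grows at rate $\log 6>\log 4=h_{\textup{top}}(\mathcal{I})-h_{\textup{top}}(\pi\mathcal{I})$. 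Since by \eqref{e24} it is precisely the growth rate of $\max_y N(y)$ that computes $\sup_{y}h_{\textup{top}}(\pi^{-1}(y))$, such exponentially rare but exponentially oversized fibres would violate \eqref{e23} without disturbing either $\sum_y N(y)$ or $\sum_y N(y)^s$; H\"older-tightness simply cannot see them. Your appeal to irreducibility only addresses the existence of limits versus $\limsup/\liminf$ (subexponential constants), not this spiky-fibre phenomenon, so the converse is not proved. Ruling out such configurations is where the paper injects real structural input --- Feng's theorem \cite{F24} (weak specification, from irreducibility, forces the counts $\#\{x_w:y_w=y\}$ to be comparable across all $y\in\pi\mathcal{I}^k$), combined with \eqref{ee} from \cite{LW77} and the maximal-entropy characterization from \cite{F23} --- and your argument needs a substitute for it.
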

\begin{proof}
    \textit{The ``only if'' part.} Recall that from \cite[Corollary 3.2]{F23}, $\dim_H X =\dim_B X$ if and only if the measure of maximal entropy on $\mathcal{I}$ projects to the measure of maximal entropy on $\pi \mathcal{I}$, i.e. there exists a $\sigma$-invariant measure $\mu$ such that
    \begin{equation}\label{e28}
    h(\mu)=h_{\textup{top}}(\mathcal{I}) \quad \text{ and }\quad h(\mu\circ\pi^{-1})=h_{\textup{top}}(\pi\mathcal{I}),
    \end{equation}
    where $h(\mu)$ denotes the \textit{measure entropy} of $\mu$.

    It is due to Ledrappier and Walters \cite{LW77} that there is  a relative variational principle  for \eqref{e22} in the form
    \begin{equation}\label{ee}
    \sup_{m} h(m)=h(\nu)+\int_{\pi\mathcal{I}} h_{\textup{top}}(\pi^{-1}(y)) d\nu(y),
    \end{equation}
    where the supremum is taken over all the $\sigma$-invariant probability measure $m$ satisfying $m\circ \pi^{-1}=\nu.$ Choose  $\mu$ the measure satisfying $h(\mu)=h_{\textup{top}}(\mathcal{I})$. Let $\nu=\mu\circ\pi^{-1}$, then we have
    \begin{equation}\nonumber
    h_{\textup{top}}(\mathcal{I})=\sup_{m} h(m)=h(\nu)+\int_{\pi\mathcal{I}} h_{\textup{top}}(\pi^{-1}(y)) d\nu(y)\leq h_{\textup{top}}(\pi \mathcal{I})+\sup_{y\in \pi \mathcal{I}} h_{\textup{top}}(\pi^{-1}(y)).
    \end{equation}
    Combining this with \eqref{e23}, we immediately get $h(\mu \circ\pi^{-1})=h_{\textup{top}}(\pi \mathcal{I})$, which gives $\dim_H X=\dim_B X.$

    \vspace{0.2cm}

    \textit{The ``if'' part}. It follows from \cite[Theorem 3.1]{F24} ($\mathcal{I}$ is a subshift satisfying the so-called ``weak specification'' in \cite{F24} since $G$ is irreducible), there is a constant $C>0$ such that 
    $$
    C^{-1} \#\{x_{w}:w\in E^k,y_w=y'\}\leq  \#\{x_{w}:w\in E^k,y_w=y\}\leq C \#\{x_{w}:w\in E^k,y_w=y'\}
    $$
    for all $y,y'\in \pi \mathcal{I}^k$ and $k\geq 1.$ Therefore, for $y,y'\in \pi \mathcal{I}$,
    $$
    h_{\textup{top}}(\pi^{-1}(y))=h_{\textup{top}}(\pi^{-1}(y')).
    $$
    Since $\dim_H X =\dim_B X$, we can choose a $\sigma$-invariant measure $\mu$ satisfying \eqref{e28}. Let $\nu=\mu\circ \pi^{-1}$, by \eqref{ee}, we have 
    $$
    \begin{aligned}
    h_{\textup{top}}(\mathcal{I})&=\sup_{m:\  m\circ\pi^{-1}=\nu} h(m)=h(\nu)+\int_{\pi\mathcal{I}} h_{\textup{top}}(\pi^{-1}(y)) d\nu(y) \\
    &= h_{\textup{top}}(\pi \mathcal{I})+\sup_{y\in \pi \mathcal{I}} h_{\textup{top}}(\pi^{-1}(y)).
    \end{aligned}
    $$
    
\end{proof}
\begin{proof}[Proof of Theorem \ref{th2}]
    Combining Lemmas \ref{le5} and \ref{le6}, it suffices to prove that $\dim_B X=\dim_A X$ if and only if \eqref{e23} holds. Since $G$ is irreducible, the expression of $\dim_B X$ in \eqref{e25} degenerates to the  following form
    $$
    \dim_B X = \frac{h_{\textup{top}}(\mathcal{I})}{\log n}+  h_{\textup{top}}(\pi \mathcal{I})\left (\frac{1}{\log m}-\frac{1}{\log n}\right) .
    $$
    Combining this with Lemma \ref{le5} and \eqref{e23}, the theorem follows.
\end{proof}

\vspace{0.2cm}

Next, we consider the general case that $G$ may be not irreducible. Let $\{H_i=(V_i,E_i)\}_{i=1}^r$ be the collection of irreducible components of $G$. For $v,v'$ in $V_i$, it is easy to see that $\dim_B \pi(X_v)=\dim_B \pi(X_{v'})$. So, for $i=1,\cdots,r$, we can write $\lambda^{(i)}=\dim_B \pi(X_v)$ for $v\in V_i$.
For $k$ in $\mathbb{N}$, define
$$
\alpha_k^{(i)}=n^{k\lambda^{(i)}}\cdot \max_{v\in V_i} \max_{y\in \Sigma_Y^k}\#\{[w]\in [E^k]:i(w)=v, t(w)\in V_i,y_w=y \}.
$$

\begin{lemma}\label{le7}
    For each $i=1,\cdots,r $, the limit
    $\alpha^{(i)}:=\lim_{k\to \infty} (\alpha_k^{(i)})^{1/k}$ exists and
    $$\max_{i=1,\cdots,r} \alpha^{(i)}=\alpha.$$
\end{lemma}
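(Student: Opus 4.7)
The plan is to prove three claims in turn: existence of the limit $\alpha^{(i)}$, the lower bound $\alpha \geq \max_i \alpha^{(i)}$, and the upper bound $\alpha \leq \max_i \alpha^{(i)}$. For existence, I would adapt the submultiplicativity argument of Lemma~\ref{lim}: the key combinatorial observation is that any admissible word $w$ with $i(w), t(w) \in V_i$ has every intermediate vertex in $V_i$, because for any intermediate $u$ the relations $v \to u$ (read off from $w$) and $u \to t(w) \to v$ (the last step via irreducibility of $V_i$) show $V_i \cup \{u\}$ is already irreducible, forcing $u \in V_i$ by maximality. Such a length-$(k+l)$ word therefore splits at position $k$ into two shorter words of the same type, yielding
\[
\tau_{k+l}^{(i)} \leq \#V_i \cdot \tau_k^{(i)} \tau_l^{(i)}
\]
for $\tau_k^{(i)} := \max_{v \in V_i,\, y} \#\{[w] \in [E^k] : i(w)=v,\, t(w) \in V_i,\, y_w = y\}$, hence $\alpha^{(i)} = n^{\lambda^{(i)}} \lim_k (\tau_k^{(i)})^{1/k}$ exists.

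For the lower bound, fix $i$ and choose $v \in V_i$, $y \in \Sigma_Y^k$ realizing $\alpha_k^{(i)}$. Irreducibility of $V_i$ provides arbitrarily long cycles through $v$ inside $V_i$, so $v \in \tilde V$. Every class $[w]$ counted in $\alpha_k^{(i)}$ admits a representative terminating in $V_i$, so $\eta(v, x_w, y_w) \geq \lambda^{(i)}$; summing the corresponding terms gives $\alpha_k \geq \alpha_k^{(i)}$, and taking $k$-th roots yields $\alpha \geq \alpha^{(i)}$.

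The upper bound $\alpha \leq \max_i \alpha^{(i)}$ is the main obstacle. My plan is to decompose each admissible path $w$ of length $k$ by the sequence of strongly connected components (SCCs) of $G$ that it visits; because the SCCs form a DAG and an exited SCC cannot be re-entered, the irreducible components encountered in order are some $V_{i_1}, \ldots, V_{i_p}$ ($p \leq r$) in which $w$ spends $l_j$ edges, interspersed with at most $\#V$ transit edges through the acyclic vertices $V_0 := V \setminus \bigcup_i V_i$. For a fixed route, transit pattern, and split $(l_j)$, the number of classes accommodating such a representative (with prescribed $v$ and $y$) is bounded by $\prod_j \tau_{l_j}^{(i_j)}$ up to $G$-dependent constants, and the number of such configurations is polynomial in $k$. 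For the weight, $\eta(v, x_w, y_w) = \max_{v' \in t(v,[w])} \lambda^{(v')}$ is attained at some $v'$ reachable from every $V_{i_j}$ in the chosen decomposition, and the reachability-monotonicity $\lambda^{(u)} \geq \lambda^{(u')}$ whenever $u \to u'$ (immediate from $\pi(X_u) \supseteq \phi_e(\pi(X_{u'}))$ for any edge $u \stackrel{e}{\to} u'$) gives $\lambda^{(v')} \leq \lambda^{(i_j)}$ for every $j$. Distributing the single exponent $k \lambda^{(v')}$ as $\sum_j l_j \lambda^{(v')} + O(1)$ across the components,
\[
n^{k \lambda^{(v')}} \prod_j \tau_{l_j}^{(i_j)} \leq C \prod_j n^{l_j \lambda^{(i_j)}} \tau_{l_j}^{(i_j)} \leq C \prod_j (\alpha^{(i_j)})^{l_j} \leq C \bigl(\max_i \alpha^{(i)}\bigr)^k,
\]
up to subexponential errors, and a final sum over the polynomially-many decompositions yields $\alpha \leq \max_i \alpha^{(i)}$. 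The hardest step is precisely this coordination of a single weight factor $n^{k\lambda^{(v')}}$ with a product of per-component counts $\tau_{l_j}^{(i_j)}$; it hinges on the monotonicity of $\lambda^{(\cdot)}$ along reachability, together with the bounded length of transits through $V_0$, which keeps the combinatorial overhead merely polynomial.
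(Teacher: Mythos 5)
Your proposal is correct and follows essentially the same route as the paper: the lower bound comes from noting that each class counted in $\alpha_k^{(i)}$ contributes at least $n^{k\lambda^{(i)}}$ to the sum defining $\alpha_k$, and the upper bound from decomposing each admissible word according to the irreducible components it visits (with only boundedly many transit edges through acyclic vertices and the monotonicity of $\lambda^{(\cdot)}$ along reachability), which is precisely the paper's inequality bounding $\sum_{[w]} n^{k\eta(v,x_w,y_w)}$ by $C\sum \alpha^{(i_1)}_{s_{i_1}}\cdots\alpha^{(i_j)}_{s_{i_j}}$. The only difference is cosmetic and lies in the finishing step: you conclude by a direct count over the polynomially many itineraries together with an $\epsilon$-perturbation of the limits $\alpha^{(i)}$ (needed, as you note, since submultiplicativity alone does not give $\alpha^{(i)}_l\le C(\alpha^{(i)})^l$), whereas the paper invokes the convergence-radius (generating-function) argument of Kenyon--Peres and Fraser--Jurga; both are valid.
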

\begin{proof}
    The existence of the limit of $\{(\alpha^{(i)}_k)^{1/k}\}_{k\geq 1}$ follows by Lemma \ref{lim}.

    Note that $\dim_B \pi(X_{v})\geq \dim_B \pi(X_{v'})$ if $v\to v'.$ It is not hard to see that  there exists $C>0$ such that for $k$ in $\mathbb{N},$ $v$ in $\tilde{V}$, $y$ in $\Sigma_Y^k$,
    $$
    \sum_{[w]\in [E^k]:\atop i(w)=v,y_w=y} n^{k\eta(v,x_w,y_w)} \leq C\cdot\sum_{1\leq i_1,\cdots, i_j\leq r}\sum_{s_{i_1}+\cdots+s_{i_j}=k}\alpha_{s_{i_1}}^{(i_1)}\cdots \alpha_{s_{i_j}}^{(i_j)},
    $$
    which immediately yields that $\alpha\leq \max_{i=1,\cdots,r} \alpha^{(i)}$ (by a same argument considering the convergence radius of $\sum_{k\geq 1}\alpha_k t^k$
    in the proofs of \cite[Lemma 3.4]{F23} and \cite[Lemma 3.3]{KP96}).

    Conversely, since $\bigcup_{i=1}^r V_i\subseteq \tilde{V}$, for $i=1,\cdots,r$, we have
    $$
    \alpha_k\geq \frac{1}{\#\tilde{V}}\sum_{v\in \tilde{V}}\max_{y\in \Sigma_Y^k}n^{k\eta(v,x_w,y_w)}\geq \frac{1}{\#\tilde{V}}\alpha_k^{(i)}.
    $$
    The lemma follows.
\end{proof}

Recall that  $\{i\}^+$ denotes the collection of $1\leq j\leq r$ such that there is a path from a vertex in $H_i$ to a vertex in  $H_j$.

\begin{lemma}\label{le8}
    For $i=1,\cdots,r$, we have $\lambda^{(i)}=\max_{j\in \{i\}^+}\frac{h_{\textup{top}}(\pi \mathcal{I}_{H_j})}{\log m}.$
\end{lemma}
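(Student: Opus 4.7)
The statement says $\lambda^{(i)}=\dim_B\pi(X_v)$ (for $v\in V_i$) equals the largest of $h_{\textup{top}}(\pi\mathcal{I}_{H_j})/\log m$ as $j$ ranges over the descendants of $i$. The plan is to first translate both sides into counting quantities: using Proposition~\ref{p1} together with the observation that the level-$k$ cylinders of $\pi(X_v)$ are the interior-disjoint intervals $\phi_w([0,1])$ indexed by $y_w$, one has
\[
\lambda^{(i)}=\lim_{k\to\infty}\frac{\log\#Y_v^k}{k\log m},\qquad Y_v^k:=\{y_w:w\in E^k,\ i(w)=v\},
\]
while by the very definition of topological entropy, $h_{\textup{top}}(\pi\mathcal{I}_{H_j})=\lim_k\frac{1}{k}\log\#\pi\mathcal{I}_{H_j}^k$. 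Once this is in place, the two inequalities are handled separately.

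\textbf{Lower bound.} For every $j\in\{i\}^+$ fix a directed path $\tilde w$ in $G$ from $v$ to some $v'\in V_j$ (of length $c_j$ say). Because $H_j$ is a \emph{maximal} irreducible subgraph, any admissible word in $E^{k-c_j}$ starting at $v'$ that is restricted to stay in $H_j$ remains a valid admissible word in the full system, and concatenation with $\tilde w$ gives words in $E^k$ starting at $v$. Hence $\#Y_v^k\geq\#\pi\mathcal{I}_{H_j,v'}^{k-c_j}$, and by irreducibility of $H_j$ the latter is within a multiplicative constant of $\#\pi\mathcal{I}_{H_j}^{k-c_j}$ (a standard argument: any $y$-word in $\pi\mathcal{I}_{H_j}$ starting at a vertex $u\in V_j$ can be extended to one starting at $v'$ by prepending a bounded path $v'\to u$). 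Passing to the limit gives $\lambda^{(i)}\ge h_{\textup{top}}(\pi\mathcal{I}_{H_j})/\log m$ for each $j\in\{i\}^+$, hence the required inequality.

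\textbf{Upper bound.} For each admissible word $w\in E^k$ with $i(w)=v$, I will decompose the vertex sequence $v=u_0,u_1,\dots,u_k$ into maximal blocks lying in a single irreducible component, separated by ``transit'' segments through vertices not in any component (or crossing from one component to another). Two key structural observations: (a) once the path leaves a component $V_j$ it can never return to it (else maximality of $V_j$ would be violated), and (b) a transit segment cannot revisit any vertex (else that vertex would lie on a cycle and hence in some irreducible component), so every transit segment has length at most $\#V$. Consequently the total transit length $T$ and the number $L\le r$ of distinct components visited are bounded by constants depending only on $G$, and writing $k_s$ for the length of the $s$-th component-interior segment, we have $\sum_s k_s=k-T\geq k-C$. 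The number of decomposition ``types'' (component sequence plus the $k_s$'s plus transit words) is polynomial in $k$, and for each type the number of possible $y$-projections is at most
\[
m^{T}\prod_{s=1}^{L}\#\pi\mathcal{I}_{H_{j_s}}^{k_s}\leq C'\prod_{s=1}^{L}e^{k_s h_{\textup{top}}(\pi\mathcal{I}_{H_{j_s}})}\leq C'\exp\Bigl(k\max_{j\in\{i\}^+}h_{\textup{top}}(\pi\mathcal{I}_{H_j})\Bigr),
\]
using submultiplicativity to convert the asymptotic entropy into a non-asymptotic bound. Summing over the polynomially many types gives the matching upper bound on $\#Y_v^k$, hence on $\lambda^{(i)}$.

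\textbf{Main obstacle.} The technical heart is the decomposition step in the upper bound: one must verify carefully that transit segments are uniformly bounded and that the combinatorial enumeration of ``types'' contributes only a polynomial factor, so that the entropic bound $e^{k\max_j h_j}$ is not spoiled. The non-repetition of non-component vertices (observation (b) above) is what makes this work and is the subtlest point; once it is granted, the remaining estimates are book-keeping.
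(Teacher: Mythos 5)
Your argument is correct in substance, but it takes a different route from the paper. The paper gives no self-contained proof: it simply invokes \cite[Lemma 3.4]{F23}, whose argument (as in Lemma \ref{le7} and \cite[Lemma 3.3]{KP96}) bounds the relevant word count by a convolution sum $\sum_{i_1,\dots,i_j}\sum_{s_{i_1}+\cdots+s_{i_j}=k}\alpha^{(i_1)}_{s_{i_1}}\cdots\alpha^{(i_j)}_{s_{i_j}}$ over decompositions into irreducible components and then compares radii of convergence of the generating functions $\sum_k\alpha_kt^k$. You instead prove the statement by direct counting: identify $\lambda^{(i)}$ with $\lim_k\frac{\log\#Y_v^k}{k\log m}$, get the lower bound by splicing a fixed path from $v$ into each reachable component, and get the upper bound by decomposing each admissible path into component-interior blocks plus transit edges. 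Your structural observations are the same ones that underlie the convolution bound (a component, once left, is never revisited by maximality; a vertex in no component lies on no cycle, so the total transit length is at most a constant depending on $G$), so the two proofs share their combinatorial core; your version is elementary and self-contained, while the generating-function packaging avoids having to track the polynomially many ``types'' explicitly.

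One justification needs repair: you bound $\#\pi\mathcal{I}_{H_{j_s}}^{k_s}\le C'e^{k_sh_{\textup{top}}(\pi\mathcal{I}_{H_{j_s}})}$ ``using submultiplicativity'', but submultiplicativity plus Fekete gives the opposite inequality ($h_{\textup{top}}$ is the infimum of $\frac1k\log\#\pi\mathcal{I}_{H_j}^k$, so $\#\pi\mathcal{I}_{H_j}^k\ge e^{kh_{\textup{top}}(\pi\mathcal{I}_{H_j})}$). The step is easily fixed: either use the definition of the limit to get, for every $\epsilon>0$, a constant $C_\epsilon$ with $\#\pi\mathcal{I}_{H_j}^k\le C_\epsilon e^{k(h_{\textup{top}}(\pi\mathcal{I}_{H_j})+\epsilon)}$, sum over the polynomially many types, and let $\epsilon\to0$ (which suffices since $\lambda^{(i)}$ is defined as a limit), or invoke Perron--Frobenius on a right-resolving presentation of the irreducible sofic shift $\pi\mathcal{I}_{H_j}$ to get the constant without the $\epsilon$. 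With that correction, and with the (implicit but needed) remarks that every $E$-edge between two vertices of $V_j$ lies in $E_j$ by maximality and that every component visited by a path starting in $V_i$ belongs to $\{i\}^+$, your proof is complete.
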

\begin{proof}
    It is same as \cite[Lemma 3.4]{F23}, which follows by a same argument in Lemma \ref{le7}.
\end{proof}

\begin{proof}[Proof of Theorem \ref{th3}]
   Firstly, combining Lemmas \ref{le7}, \ref{le8} and Theorem \ref{th2}, we have
    $$
    \begin{aligned}
    \dim_A X &=\max_{1\leq i\leq r} \left \{ \sup_{y\in \pi\mathcal{I}_{H_i}} \frac{h_{\textup{top}}(\pi^{-1}(y)\cap \mathcal{I}_{H_i})}{\log n} +\lambda^{(i)}\right \}
    \\
    &=\max_{1\leq i\leq r} \left \{ \sup_{y\in \pi\mathcal{I}_{H_i}} \frac{h_{\textup{top}}(\pi^{-1}(y)\cap \mathcal{I}_{H_i})}{\log n} +\max_{j\in \{i\}^+} \frac{h_{\textup{top}}(\pi\mathcal{I}_{H_j})}{\log m}\right \}.
    \end{aligned}
    $$

    Secondly, if \eqref{e26} holds, it is direct to see that $\dim_B X=\dim_A X$. Conversely, assume that  $\dim_B X=\dim_A X$. By \eqref{e25}, we can pick $(i,j)$ with $j\in \{i\}^+$ such that
    $$
    \dim_B X=\frac{h_{\textup{top}}(\mathcal{I}_{H_i})}{\log n}+ h_{\textup{top}}(\pi \mathcal{I}_{H_j})\left (\frac{1}{\log m}-\frac{1}{\log n}\right).
    $$
    Using \eqref{e22} and $\max_{j'\in \{i\}^+}h_{\textup{top}}(\pi \mathcal{I}_{H_{j'}})=h_{\textup{top}}(\pi \mathcal{I}_{H_j}) $ we get
    $$
    \dim_B X\leq \sup_{y\in \pi \mathcal{I}_{H_i}} \frac{h_{\textup{top}}(\pi^{-1}(y)\cap \mathcal{I}_{H_i})}{\log n} +\frac{h_{\textup{top}}(\pi \mathcal{I}_{H_j})}{\log m}+\frac{h_{\textup{top}}(\pi \mathcal{I}_{H_i})-h_{\textup{top}}(\pi \mathcal{I}_{H_j})}{\log n}\leq \dim_A X,
    $$
    which implies $h_{\textup{top}}(\pi \mathcal{I}_{H_{i}})=h_{\textup{top}}(\pi \mathcal{I}_{H_j})$ and  \eqref{e26} follows.

    Lastly, let $\{X_{v}^{(i)}\}_{v\in V_i}$ be the graph-directed Bedford-McMullen $(\times m,\times n)$-carpets family associated with $H_i$ and write $X^{(i)}=\bigcup_{v\in V_i} X_v^{(i)}.$ If $\eqref{e26}$ holds, by Lemma \ref{le6}, it holds that  $\dim_H X^{(i)}=\dim_B X^{(i)}$ for the same $i$ in the previous paragraph.
   Thus we have
    $$
    \dim_H X \geq \dim_H X^{(i)} =\dim_B X^{(i)} =\dim_B X,
    $$ giving that
    $\dim_H X=\dim_BX=\dim_AX.$
\end{proof}

\begin{proof}[Proof of Theorem \ref{th4}]
    The first part follows from Theorem \ref{th2}. For the second part, we consider the example of graph-directed Bedford-McMullen carpet family $\{X_a,X_b\}$ generated in the way illustrated in Figure \ref{figab}.
\begin{figure}[htp]
	\includegraphics[width=4.5cm]{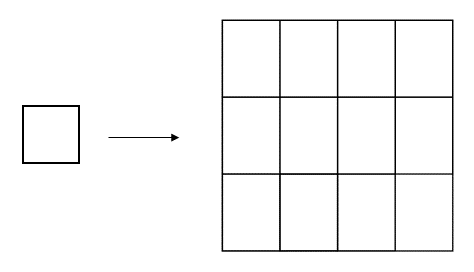}\hspace{1cm}
 \begin{picture}(0,0)
 \put(-149,35){a}
  \put(-49,35){a}
   \put(-64,35){a}
   \put(-64,54){a}
   \put(-64,14){a}
   \put(-49,54){a}
   \put(-49,14){a}
   \put(-95,35){b}
  \end{picture}
    \includegraphics[width=4.5cm]{a.png}
    \begin{picture}(0,0)
    \put(-121,34){b}
    \put(-67,34){b}
    \put(-67,54){b}
    \put(-67,14){b}
    \put(-51,54){b}
     \put(-35,54){b}
      \put(-19,54){b}
  \end{picture}
	\caption{An example of graph-directed Bedford-McMullen carpet family.}
	\label{figab}
\end{figure}

     At this time, $n=4$, $m=3$. It is not hard to check that $\dim_B \pi(X_{a})=\dim_B \pi(X_{b})=1$, and for $X=X_a\cup X_b$, $\dim_A X=2$  and $\dim_L X=1$, $\dim_B X=\dim_H X=\frac{3}{2}$.
\end{proof}

\section*{Acknowledgments}
The authors are grateful to Dr. Zhou Feng for sharing his latest paper \cite{F24}, which is useful for the proof of the Lemma \ref{le6}.

\bibliographystyle{amsplain}

\begin{thebibliography}{10}

\bibitem{Bar07}
K. Bara\'{n}ski, \emph{ Hausdorff dimension of the limit sets of some planar geometric constructions,}
Adv. Math. 210 (2007), no. 1, 215–245.

\bibitem{B84}
T. Bedford,  \emph{Crinkly curves, Markov partitions and box dimensions in self-similar sets,} Ph.D. Thesis, University of Warwick, 1984.

\bibitem{B71}
R. Bowen, \emph{Entropy for group endomorphisms and homogeneous spaces,} Trans. Amer. Math. Soc. 153 (1971), 401–414.

\bibitem{DN04}
M. Das and S.-M. Ngai,
\emph{Graph-directed iterated function systems with overlaps,} Indiana Univ. Math. J. 53 (2004), no. 1, 109–134.

\bibitem{DS17}
		T. Das, D. Simmons, \emph{The Hausdorff and dynamical dimensions of self-affine sponges: a dimension gap result,} Invent. math. 210 (2017), 85–134.
		

\bibitem{Fal14}
K. Falconer, \emph{Fractal geometry,}
Mathematical foundations and applications, Third edition, John Wiley $\&$ Sons, Ltd., Chichester, 2014. xxx+368 pp.

\bibitem{FW05}
D.J. Feng and Y. Wang, \emph{A class of self-affine sets and self-affine measures,}
J. Fourier Anal. Appl. 11 (2005), no. 1, 107–124.

\bibitem{F24}
Z. Feng, \emph{On the coincidence of the Hausdorff and box dimensions for some affine-invariant sets,} arXiv: 2405.03213.

\bibitem{F12}
J.M. Fraser, \emph{On the packing dimension of box-like self-affine sets in the plane,} Nonlinearity, 25 (2012), no. 7, 2075–2092.


\bibitem{F14}
J.M. Fraser, \emph{Assouad type dimensions and homogeneity of fractals,} Trans. Amer. Math. Soc. 366 (2014), no. 12, 6687–6733.

\bibitem{F16}
J.M. Fraser, \emph{On the $L^q$-spectrum of planar self-affine measures,} Trans. Amer. Math. Soc. 368 (2016), no. 8, 5579–5620.


\bibitem{F21}
J.M. Fraser, \emph{Assouad dimension and fractal geometry,} Cambridge Tracts in Math., 222
Cambridge University Press, Cambridge, 2021.

\bibitem{FJ21}
J.M. Fraser and N. Jurga, \emph{The box dimensions of exceptional self-affine sets in $\mathbb{R}^3$,} Adv. Math. 385 (2021), No. 107734.

\bibitem{F23}
J.M. Fraser and N. Jurga, \emph{Box dimensions of  $(\times m,\times n)$-invariant sets,} Indiana Univ. Math. J. 72 (2023), no. 6, 2341–2367.

\bibitem{KP96}
R. Kenyon and Y. Peres, \emph{Hausdorff dimensions of sofic affine-invariant sets,} Israel J. Math. 94 (1996), 157–178.

\bibitem{K23}
I. Kolossváry, \emph{The $L^q$ spectrum of self-affine measures on sponges,} J. London Math. Soc., 108 (2023), 666-701.


\bibitem{LG92}
S.P. Lalley and D. Gatzouras, \emph{Hausdorff and box dimensions of certain self-affine fractals,} Indiana Univ. Math. J. 41 (1992), no. 2, 533–568.

\bibitem{Lar67}
D.G. Larman, \emph{A new theory of dimension,} Proc. London Math. Soc. (3) 17 (1967), 178– 192.

\bibitem{LW77}
F. Ledrappier and P.  Walters, \emph{A relativised variational principle for continuous transformations,} J. London Math. Soc. (2) 16 (1977), no. 3, 568–576.

\bibitem{M11}
J.M. Mackay, \emph{Assouad dimension of self-affine carpets,} Conform. Geom. Dyn. 15 (2011), 177–187.

\bibitem{Mc84}
C. McMullen, \emph{The Hausdorff dimension of general Sierpi\'{n}ski carpets,} Nagoya Math. J. 96 (1984), 1–9.


\bibitem{Olsen98}
L. Olsen, \emph{Self-affine multifractal Sierpinski sponges in $\mathbb{R}^d$,} Pacific J. Math., 183 (1998), no. 1, 143-199.



\end{thebibliography}

\end{document}